\theoremstyle{plain}
\newtheorem{Theorem}{Theorem}[section]
\newtheorem{Corollary}[Theorem]{Corollary}
\newtheorem{Lemma}[Theorem]{Lemma}
\newtheorem{Proposition}[Theorem]{Proposition}
\theoremstyle{definition}
\newtheorem{Remark}[Theorem]{Remark}
\newtheorem{Notation/Definition}[Theorem]{Notation/Definition}
\def\cF{{\mathcal{F}}}
\def\cO{\mathcal{O}}
\def\fA{{\mathfrak{A}}}
\def\Aut{\mathrm{Aut}}
\def\dim{\mathrm{dim}}
\def\Gal{\mathrm{Gal}}
\def\Hom{\mathrm{Hom}}
\def\Irr{\mathrm{Irr}}           
\def\IBr{\mathrm{IBr}}
\def\Tr{\mathrm{Tr}}
\DeclareMathOperator{\Syl}{Syl}
\DeclareMathOperator{\GL}{\operatorname{GL}}
\DeclareMathOperator{\SL}{\operatorname{SL}}
\DeclareMathOperator{\PSL}{\operatorname{PSL}}
\DeclareMathOperator{\PGL}{\operatorname{PGL}}
\begin{document}

\title{splendid Morita equivalences for principal 2-blocks with dihedral defect groups}

\author{Shigeo Koshitani}
\address{Shigeo Koshitani\\ Center for Frontier Science,
Chiba University, 1-33 Yayoi-cho, Inage-ku, Chiba, 263-8522, Japan.}
\email{koshitan@math.s.chiba-u.ac.jp}

\author{Caroline Lassueur}
\address{Caroline Lassueur\\ 
FB Mathematik, TU Kaiserslautern, 
Postfach 3049, 67653 Kaisers\-lautern, Germany.}
\email{lassueur@mathematik.uni-kl.de}

\date{\today}
\thanks{The first author was supported by the Japan Society for 
Promotion of Science (JSPS), Grant-in-Aid for Scientific Research
(C)15K04776, 2015--2018. The second author acknowledges financial support by the TU Nachwuchsring of the TU Kaiserslautern as well as by DFG SFB TRR 195.}
\keywords{Puig's finiteness conjecture, Morita equivalence, splendid Morita equivalence, stable equivalence of Morita type,
Scott module, Brauer indecomposability, generalised decomposition numbers, dihedral $2$-group}
\subjclass[2010]{16D90, 20C20, 20C15, 20C33}

\maketitle

\pagestyle{myheadings}
\markboth{S. Koshitani and C. Lassueur}{Splendid Morita equivalences for principal 2-blocks with dihedral defect groups}


\begin{abstract} 
Given a dihedral $2$-group $P$ of order at least~8, we classify the splendid Morita equivalence classes of principal $2$-blocks with defect groups isomorphic to $P$.
To this end  we construct explicit stable  equivalences of Morita type induced by specific Scott modules using Brauer indecomposability and gluing methods; we then determine when these stable equivalences are actually Morita equivalences, and hence automatically splendid Morita equivalences. Finally, we compute the generalised decomposition numbers in each case.
\end{abstract}

\maketitle

\section{Introduction}

In this paper, we are concerned with the classification of principal $2$-blocks 
with dihedral defect groups of order at least $8$, up to {\it splendid Morita equivalence}, also often called {\it Puig equivalence}.
\par
This is motivated by a conjecture of Puig's \cite{Pui82} known as Puig's Finiteness Conjecture (see Brou{\'e} \cite[6.2]{Bro94} or Th\'{e}venaz \cite[(38.6) Conjecture]{The95} for published versions) stating that for a given prime $p$ and a finite $p$-group $P$ there are only finitely many isomorphism classes of interior $P$-algebras arising as source algebras of $p$-blocks of finite groups with defect groups isomorphic to $P$, or equivalently that there are only a finite number of {\it splendid Morita equivalence}  classes of blocks of finite groups with defect groups isomorphic to~$P$. This obviously strengthens Donovan's Conjecture.  However, we emphasise that by contrast to Donovan's Conjecture,  if $p$ is a prime number, $(K,\cO, k)$ a $p$-modular system with $k$ algebraically closed, and  Puig's Finiteness Conjecture 
holds over $k$, then it automatically holds over~$\cO$, since the bimodules inducing splendid Morita equivalences are liftable from $k$ to $\cO$.
\par
The cases where $P$ is either cyclic \cite{Lin96b} or a Klein-four group \cite{CEKL11} are the only cases  where this conjecture has been proved to hold in full generality. Else, under additional assumptions, Puig's Finiteness Conjecture has also been proved for several classes of finite groups, as for instance for $p$-soluble groups \cite{Pui94}, for symmetric groups \cite{Pui94}, for alternating groups \cite{Kes02}, of the double covers thereof \cite{Kes96},  for Weyl groups \cite{Kes00}, or for classical groups \cite{HK00, HK05, Kes01}.
The next cases to investigate should naturally be in tame representation type.
\par
In this paper, we investigate the principal blocks of groups $G$ with a Sylow $2$-subgroup~$P$ 
which is dihedral of order at least $8$ over an algebraically closed field $k$ of characteristic~$2$. Our main result is the following.

\begin{Theorem}\label{MainTheorem} 
Let $n\geq 3$ be a positive integer. Then the splendid Morita equivalence classes of principal $2$-blocks of finite groups with
dihedral defect group $D_{2^n}$ of order $2^n$ coincide with the Morita equivalence classes of such blocks.
More accurately, a principal block with dihedral defect group $D_{2^n}$ is splendidly Morita equivalent to precisely one of the following blocks:
\begin{enumerate}[leftmargin=1.3cm]
\item[\rm(1)] $kD_{2^n}$;
\item[\rm(2)] $B_0(k\mathfrak A_7)$ in case $n=3$;
\item[\rm(3)] $B_0(k[{\mathrm{PSL}}_2(q)])$, where $q$ is a fixed odd prime power such that $(q-1)_2=2^n$;
\item[\rm(4)] $B_0(k[{\mathrm{PSL}}_2(q)])$, where $q$ is a fixed odd prime power such that $(q+1)_2=2^n$;
\item[\rm(5)] $B_0(k[{\mathrm{PGL}}_2(q)])$, where $q$ is a fixed odd prime power such that $2(q-1)_2=2^n$; or
\item[\rm(6)] $B_0(k[{\mathrm{PGL}}_2(q)])$, where $q$ is a  fixed odd prime power such that $2(q+1)_2=2^n$.
\end{enumerate}
In particular, if $q$ and $q'$ are two odd prime powers as in {\rm(3)}-{\rm(6)} such that either \linebreak $(q-1)_2=(q'-1)_2$ (Cases {\rm(3)} and {\rm(5)}), or $(q+1)_2=(q'+1)_2$ (Cases {\rm(4)} and {\rm(6)}), then $B_0(k[{\mathrm{PSL}}_2(q)])$ is splendidly Morita equivalent to $B_0(k[{\mathrm{PSL}}_2(q')])$ and $B_0(k[{\mathrm{PGL}}_2(q)])$ is splendidly Morita equivalent to $B_0(k[{\mathrm{PGL}}_2(q')])$. 
\end{Theorem}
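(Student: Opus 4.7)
First, by the Gorenstein--Walter classification of finite simple groups with dihedral Sylow $2$-subgroups, combined with standard Fong-type reductions for principal blocks, one reduces to the case where $G$ itself is one of $D_{2^n}$, $\fA_7$, $\PSL_2(q)$, or $\PGL_2(q)$ for some odd prime power $q$ whose Sylow $2$-subgroup has order $2^n$. Indeed, $O_{2'}(G)$ acts trivially on $B_0(kG)$ and the inflation $B_0(kG)\simeq B_0(k[G/O_{2'}(G)])$ is automatically splendid (its inducing bimodule has trivial source), so it suffices to produce, in each of these four families, a splendid Morita equivalence with one of the six blocks in the statement; the $2$-group case already yields Case~(1).

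The bulk of the work is then to prove the ``coalescence'' statements: for odd prime powers $q,q'$ with $(q-1)_2=(q'-1)_2$ the principal blocks $B_0(k\PSL_2(q))$ and $B_0(k\PSL_2(q'))$ are splendidly Morita equivalent, and similarly in the three parallel cases. Fixing a common Sylow $2$-subgroup $P\cong D_{2^n}$ of $G=\PSL_2(q)$ and $G'=\PSL_2(q')$, I would consider the Scott module
\[
M \;=\; \mathrm{Sc}\bigl(G\times G',\,\Delta P\bigr)
\]
and verify that $M$ is \emph{Brauer indecomposable}, i.e.\ that for every non-trivial $Q\le P$ the Brauer quotient $M(\Delta Q)$ is indecomposable as a $k[C_{G\times G'}(\Delta Q)]$-module. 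This should reduce to a uniform matching of $2$-local structure in $\PSL_2(q)$ and $\PSL_2(q')$, whose centralisers of $2$-elements are explicit (cyclic or dihedral), together with the fact that the fusion systems of $G$ and $G'$ on $P$ coincide. Granted Brauer indecomposability, the Brou\'e--Rickard gluing construction produces a splendid stable equivalence of Morita type induced by $M$; the same scheme, with the obvious modifications, handles the $\PGL_2$-family and the cross-cases involving $\fA_7$.

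To upgrade each such stable equivalence to a splendid Morita equivalence, I would invoke Linckelmann's criterion: it suffices to check that the functor $M\otimes_{kG}-$ sends each simple $B_0(kG)$-module to a simple $B_0(kG')$-module. Erdmann's classification of blocks of dihedral type ensures that each principal block in play has either one or three simple modules with completely tabulated decomposition data, so the simples-to-simples condition can be verified case by case from the explicit description of the simple $k\PSL_2(q)$-modules (and its analogues for $\PGL_2(q)$ and $\fA_7$). Finally, to see that the six classes listed are pairwise distinct as splendid Morita equivalence classes, I would compute their generalised decomposition matrices: these are invariants of splendid Morita equivalence, and together with the count of simples and the Cartan invariants, the numerical data attached to nontrivial $2$-regular sections of $P$ separate the six classes from one another.

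The technical heart of the argument is the Scott module step: Brauer indecomposability of $M$ can genuinely fail, and its verification demands a careful induction through the subgroup lattice of $P$ combined with a uniform treatment of centralisers across the three families $\PSL_2$, $\PGL_2$, $\fA_7$. The Linckelmann upgrade from stable to Morita equivalence is similarly delicate for the non-trivial simples in the $3$-simple cases, and I expect most of the remaining case analysis to concentrate there.
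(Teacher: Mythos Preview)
Your overall strategy matches the paper's: reduce via $O_{2'}(G)$ and Gorenstein--Walter, prove Brauer indecomposability of $\mathrm{Sc}(G\times G',\Delta P)$, glue to a stable equivalence, then upgrade via Linckelmann's criterion. However, your reduction step has a genuine gap. After quotienting by $O_{2'}(G)$, the Gorenstein--Walter theorem does \emph{not} land you directly in $\{D_{2^n},\fA_7,\PSL_2(q),\PGL_2(q)\}$: in case~(D3) one obtains an arbitrary subgroup of $\mathrm{P}\Gamma\mathrm{L}_2(q)$ containing $\PSL_2(q)$, i.e.\ a group of the form $\PSL_2(q)\rtimes C_f$ or $\PGL_2(q)\rtimes C_f$ with $C_f$ a cyclic $2'$-subgroup of the Galois group. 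Since $C_f$ is not normal, this is not absorbed by any ``Fong-type'' reduction through $O_{2'}$. The paper spends a separate section on this: one shows that such a $G$ satisfies $G=H\,C_G(P)$ for $H\in\{\PSL_2(q),\PGL_2(q)\}$ (because $\Aut(P)$ is a $2$-group and $C_f$ normalises $P$), and then the Alperin--Dade theorem yields a splendid Morita equivalence $B_0(kG)\sim_{SM} B_0(kH)$. Without this step your list of ``base cases'' is incomplete.

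Two further remarks. First, $B_0(k\PGL_2(q))$ has \emph{two} simples, not one or three; the trichotomy $l(B_0)\in\{1,2,3\}$ corresponds exactly to the three saturated fusion systems on $D_{2^n}$, and the $\PGL_2$ case (two simples) must be treated on its own. Second, for distinctness of the six classes you propose comparing generalised decomposition matrices; this works, but the paper takes the shorter route of invoking Erdmann's classification to see that the six blocks are already pairwise Morita-inequivalent, hence \emph{a fortiori} splendidly inequivalent---the generalised decomposition numbers are computed only as a corollary. Finally, your simples-to-simples check is left vague; the paper's device is to show that the stable equivalence sends $\mathrm{Sc}(G,Q)$ to $\mathrm{Sc}(G',Q)$ and $\Omega_Q(k_G)$ to $\Omega_Q(k_{G'})$ (up to projectives) for every $Q\le P$, then to compute explicitly the Loewy structure of $\mathrm{Sc}(G,\mathbb{B}(q))$ for a Borel subgroup $\mathbb{B}(q)$ and strip off the trivial composition factors.
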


\begin{Remark}
We note that if $G$ is a soluble group and $B$ is an arbitrary $2$-block of $G$ 
with a defect group $P\cong D_{2^n}$ with $n\geq 3$ which is not  nilpotent,   
then $n=3$ and $B$ is actually splendidly Morita equivalent to $k\mathfrak S_4\cong k[{\mathrm{PGL}}_2(3)]$ 
(see \cite{Kos82}).
There is also an interesting and  related result by Linckelmann \cite{Lin94} where all
derived equivalence classes of blocks $B$ with dihedral defect groups over the field $k$ are determined.
\end{Remark}

Furthermore, we will prove in Corollary~\ref{cor:steqD2n} that, for a given defect group $P\cong D_{2^n}$ ($n\geq 3$), up to stable equivalence of Morita type, there are exactly three equivalence classes of principal blocks of finite groups $G$  with defect group $P$, and these depend only on the fusion system $\cF_P(G)$, or equivalently on the number of modular simple modules in $B_0(kG)$.\\

In order to prove Theorem~\ref{MainTheorem}, we will construct explicit Morita equivalences induced by bimodules given by Scott modules of the form $\mathrm{Sc}(G\times G',\Delta P)$. First we will construct stable equivalences of Morita type using these modules using gluing methods and then determine when these stable equivalences are actually Morita equivalences.
To reach this aim, we make use of the notion of Brauer indecomposability, introduced in \cite{KKM11}. In particular, we will use some recent results of Ishioka and Kunugi~\cite{IK17} in order to prove the following theorem:

\begin{Theorem}\label{MainThm1}\label{MainThm}
Let $G$ be a finite group with a dihedral $2$-subgroup $P$ of order at least~$8$. Assume moreover that the fusion system
$\mathcal F_P(G)$ is saturated and $C_G(Q)$ is $2$-nilpotent for every $\mathcal F_P(G)$-fully normalised non-trivial subgroup  $Q$ of $P$.
Then the Scott module ${\mathrm{Sc}}(G,\, P)$ is Brauer indecomposable.
\end{Theorem}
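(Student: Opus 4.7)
My plan is to apply the reduction theorem of Ishioka--Kunugi \cite{IK17}, which, under saturation of $\mathcal F := \mathcal F_P(G)$, reduces Brauer indecomposability of the Scott module $\mathrm{Sc}(G,P)$ to a local verification at each $\mathcal F$-fully normalised non-trivial subgroup $Q \le P$: namely, one needs to check that the Brauer quotient $\mathrm{Sc}(G,P)(Q)$ is indecomposable as a $k[N_P(Q)C_G(Q)/Q]$-module after restriction. The first step would be to use the standard description of the Brauer construction of a $p$-permutation module (Brou\'e's theorem) together with saturation of $\mathcal F$ to obtain
\[
\mathrm{Sc}(G,P)(Q)\;\cong\;\mathrm{Sc}\bigl(N_G(Q)/Q,\,N_P(Q)/Q\bigr)
\]
as $k[N_G(Q)/Q]$-modules, reducing the problem to the indecomposability of the restriction $\mathrm{Res}^{N_G(Q)/Q}_{N_P(Q) C_G(Q)/Q}\mathrm{Sc}\bigl(N_G(Q)/Q,N_P(Q)/Q\bigr)$.

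In the second step I would enumerate, up to $\mathcal F$-conjugacy, the proper non-trivial subgroups of $P \cong D_{2^n}$ and compute their $P$-normalisers; they fall into three families --- cyclic subgroups of the maximal rotation subgroup $\langle r\rangle$, order-$2$ subgroups generated by reflections, and proper dihedral subgroups $D_{2^k}$ with $2\le k<n$ (including the two $P$-classes of Klein-four subgroups arising as soon as $n\ge 3$). For each fully $\mathcal F$-normalised such $Q$, the $2$-nilpotency of $C_G(Q)$ passes to $C_G(Q)/Q$, and hence $N_P(Q)C_G(Q)/Q$ is itself $2$-nilpotent with normal $2$-complement $O_{2'}(C_G(Q)/Q)$ and Sylow $2$-subgroup isomorphic to $N_P(Q)/Q$. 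In such a $2$-nilpotent setting, any trivial-source module with vertex equal to the full Sylow $2$-subgroup must be inflated from the Sylow quotient, so that $\mathrm{Sc}\bigl(N_P(Q)C_G(Q)/Q,N_P(Q)/Q\bigr)$ is simply the trivial module; appealing to standard properties of restriction of Scott modules, this forces the restriction above to be indecomposable.

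The main obstacle I anticipate is controlling the Brauer-quotient identification $\mathrm{Sc}(G,P)(Q)\cong \mathrm{Sc}(N_G(Q)/Q,N_P(Q)/Q)$ on the nose rather than merely as a direct summand. This is precisely the content of the Ishioka--Kunugi reduction and requires one to use both the saturation of $\mathcal F_P(G)$ (to pin down the vertices of all possible summands) and the explicit fusion data in a dihedral $2$-group (to limit the automizer $N_G(Q)/QC_G(Q)$ at each $Q$). Once this identification is secured uniformly over the finite list of fully $\mathcal F$-normalised subgroup classes in $D_{2^n}$, the $2$-nilpotency hypothesis on the centralisers yields the required local indecomposability in each case, and the criterion of \cite{IK17} assembles these statements into Brauer indecomposability of $\mathrm{Sc}(G,P)$.
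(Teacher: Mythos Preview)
Your overall plan is the right one: Ishioka--Kunugi's criterion (Theorem~\ref{thm:IK17Thm1.3}) is exactly the tool the paper uses, and the enumeration of subgroups of $D_{2^n}$ is relevant. But the proposal has a genuine gap at the step you gloss over as ``standard properties of restriction of Scott modules''. First, the IK criterion asks for indecomposability of $\mathrm{Sc}(N_G(Q),N_P(Q))\!\downarrow_{QC_G(Q)}$, not of the restriction to $N_P(Q)C_G(Q)$; these subgroups differ as soon as $Q\lneq N_P(Q)$. Second, because $P$ is merely a $2$-subgroup of $G$ (not a Sylow $2$-subgroup), being fully $\mathcal F$-normalised does \emph{not} force $C_P(Q)\in\Syl_2(C_G(Q))$, so your claim that $N_P(Q)/Q$ is a Sylow $2$-subgroup of $N_P(Q)C_G(Q)/Q$ is unjustified in this generality. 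Third, and most importantly, even granting $2$-nilpotency of the target subgroup, there is no ``standard'' reason why the restriction of a Scott module to a $2$-nilpotent subgroup containing its vertex should remain indecomposable; this is precisely the non-trivial content you must supply.

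The paper supplies it via Theorem~\ref{thm:IK17Thm1.4}: one must produce, for each fully normalised $Q\neq 1$, a subgroup $H_Q\leq N_G(Q)$ with $N_P(Q)\in\Syl_2(H_Q)$ and $|N_G(Q):H_Q|$ a $2$-power. When $Q\not\cong C_2\times C_2$ one has that $\Aut(Q)$ is a $2$-group, so $N_G(Q)/C_G(Q)$ is a $2$-group and the hypothesis on $C_G(Q)$ makes $N_G(Q)$ itself $2$-nilpotent; then $H_Q:=O_{2'}(N_G(Q))\rtimes N_P(Q)$ works. The genuinely hard case is $Q\cong C_2\times C_2$ with $N_G(Q)/C_G(Q)\cong\mathfrak S_3$: here $N_G(Q)$ is \emph{not} $2$-nilpotent, and your local $2$-nilpotency argument breaks down. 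The paper handles this case with an application of the Baer--Suzuki theorem (Lemma~\ref{KeyLemma} and Corollary~\ref{KeyCor}) to locate a copy of $\mathfrak S_3$ inside the quotient $N_G(Q)/(O_{2'}(C_G(Q))\times Q)$ containing the image of $N_P(Q)$, and this is what yields $H_Q$. Your proposal contains nothing corresponding to this step, and without it the argument does not go through.
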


This result, crucial for our work, may in fact be of independent interest as it is an extension of the main results of~\cite{KKL15}. We note that further results on Brauer indecomposability of Scott modules under different hypotheses may be found in \cite[Theorem~1.2]{KKM11}, \cite[Theorem~1.2(b)]{KKL15} and \cite{Tuv14}.
\\

The paper is structured as follows. In Section~\ref{sec:nota} we set up our notation and recall background material which we will use throughout. In Section~\ref{sec:Scott} we establish some properties of Scott modules of direct products with respect to diagonal $p$-subgroups. From Section~\ref{sec:stableeq} onwards, we will assume that the field $k$ has characteristic $2$ and we will focus our attention on groups with dihedral Sylow $2$-subgroups of order at least $8$.
In Section~\ref{sec:stableeq} we prove Theorem~\ref{MainThm1}. In Sections~\ref{sec:pslpgl}~and~\ref{sec:autgrps} we determine when the  stable equivalences constructed in Section~\ref{sec:stableeq} are indeed Morita equivalences. In Section~\ref{sec:proofs} we prove Theorem~\ref{MainTheorem}, and finally, in Section~\ref{sec:genDecNum}, as a consequence of Theorem~\ref{MainTheorem},   
we can specify the signs occurring in Brauer's computation of the generalised decomposition numbers of principal blocks with dihedral defect groups in \cite[\S VII]{Bra66}. This will yield the following result:

\begin{Corollary}
If $G$ is a finite group with a dihedral Sylow $2$-subgroup of order $2^n$ with $n\geq 3$, then $|\Irr(B_0(kG))|=2^{n-2}+3$ and the values at non-trivial $2$-elements of the ordinary irreducible characters in $\Irr(B_0(kG))$ are given by the non-trivial generalised decomposition numbers of $B_0(kG)$ and depend only on the splendid Morita equivalence class of $B_0(kG)$.
\end{Corollary}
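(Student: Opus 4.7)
The plan is to split the corollary into three parts. The count $|\Irr(B_0(kG))|=2^{n-2}+3$ is the classical invariant of any $2$-block with dihedral defect group of order $2^n\geq 8$, already established in \cite[\S VII]{Bra66}, so I would simply quote it.

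The assertion about character values is essentially a reformulation of Brauer's second main theorem combined with the third main theorem. Concretely, for any non-trivial $2$-element $u\in G$ and any $\chi\in\Irr(B_0(kG))$, one has
\[
\chi(u) \;=\; \sum_{\varphi\in\IBr(B_0(kC_G(u)))} d^{u}_{\chi\varphi}\,\varphi(1),
\]
so $\chi(u)$ is entirely controlled by the $u$-column of the generalised decomposition matrix together with the Brauer character degrees of $B_0(kC_G(u))$. Since $u\neq 1$, these are precisely the ``non-trivial'' generalised decomposition numbers of $B_0(kG)$, which gives the second statement of the corollary verbatim.

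The crucial input is the invariance of this data under splendid Morita equivalence. A splendid Morita equivalence $B_0(kG)\sim B_0(kG')$ is induced by a $p$-permutation bimodule $M$ with vertex the diagonal defect group; applying the Brauer construction at $\Delta\langle u\rangle$ yields a splendid Morita equivalence $B_0(kC_G(u))\sim B_0(kC_{G'}(u))$, and these equivalences produce compatible bijections on $\Irr$- and $\IBr$-sets under which the generalised decomposition numbers correspond. Combined with Theorem~\ref{MainTheorem}, which reduces $B_0(kG)$ to one of the six listed representatives, this both proves the invariance claim and, as announced in the introduction, allows the signs left undetermined in Brauer's tables to be pinned down by direct computation inside the character tables of $D_{2^n}$, $\mathfrak A_7$, $\mathrm{PSL}_2(q)$ and $\mathrm{PGL}_2(q)$. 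The hard part I would expect in writing this out in full is the explicit case-by-case sign bookkeeping across these six families and the matching of conventions with Brauer's parametrisation; the structural part of the argument is essentially formal once splendid Morita invariance of local data has been granted.
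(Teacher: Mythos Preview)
Your outline is broadly along the right lines and mirrors the paper's strategy, but there is one genuine gap in your second step. You correctly write
\[
\chi(u) \;=\; \sum_{\varphi\in\IBr(B_0(kC_G(u)))} d^{u}_{\chi\varphi}\,\varphi(1)
\]
and observe that $\chi(u)$ depends on the $u$-column of the generalised decomposition matrix \emph{together with} the local Brauer character degrees $\varphi(1)$. But the corollary asserts that the character values are \emph{given by} the generalised decomposition numbers alone; the degrees are extra data and not part of them. What makes this collapse work---and what is missing from your argument---is Brauer's observation \cite[Lemma~(7A)]{Bra66} that for every non-trivial $2$-element $u$ in a group with dihedral Sylow $2$-subgroup the centraliser $C_G(u)$ has a normal $2$-complement. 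Hence $B_0(kC_G(u))$ is nilpotent with a \emph{unique} irreducible Brauer character, namely the trivial one, and the formula reduces to $\chi(u)=d^{u}_{\chi,1_{C_G(u)}}$. This is precisely why each non-trivial $u$-column of $\mathcal{D}_{\mathrm{gen}}(B_0)$ literally records the character values, and it is exactly the argument the paper gives.

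For invariance under splendid Morita equivalence, your route via the Brauer construction on the bimodule and the resulting local equivalences is valid, but the paper takes a shorter path: it simply quotes the fact that generalised decomposition numbers are determined by a source algebra of the block \cite[(43.10)~Proposition]{The95}, and splendid Morita equivalence is precisely source-algebra equivalence. Both arguments are correct; the source-algebra formulation just avoids having to spell out the compatibility of the local character bijections with the global one.
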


Here by \emph{non-trivial} generalised decomposition number, we mean the generalised decomposition numbers parametrised by non-trivial $2$-elements.


\section{Notation and quoted results}\label{sec:nota}

\subsection{Notation} \label{subsec:Nota}{\ }
Throughout this paper, unless otherwise stated we adopt the following notation and conventions.  All groups considered are assumed to be finite and all modules over finite group algebras are assumed to be finitely generated unitary right modules.
We let  $G$ denote a finite group, and $k$  an algebraically closed field of characteristic $p>0$.\par
Given a positive integer $n$,  we write $D_{2^n}, C_n, \mathfrak S_n$ and $\mathfrak A_n$ for the dihedral group of order~$2^n$,
the cyclic group of order $n$,  the symmetric group of degree $n$ and the alternating group of degree $n$, respectively. 
We write $H\leq G$ when $H$ is a subgroup of $G$. Given two finite groups $N$ and $H$, we denote by $N\rtimes H$ 
a semi-direct product of $N$ by $H$ (where $N\vartriangleleft (N\rtimes H))$. 
For a subset $S$ of $G$, we set $S^g:=g^{-1}Sg$, and for $h\in G$ we set $h^{g}:=g^{-1}hg$.
For an integer $n\geq 1$, 
$\langle g_1,\ldots, g_n\rangle$ is the 
subgroup of $G$ generated by
the elements $g_1,\ldots,g_n\in G$. We denote the centre of $G$ by  $Z(G)$ and we set $\Delta G:=\{(g,g)\in G\times G \,|\, g\in G\}\leq G\times G$.
\par
Given a $p$-subgroup $P\leq G$ we denote by $\mathcal F_P(G)$ the fusion system of $G$ on $P$; that is the category whose objects are the $p$-subgroups of $P$, and whose morphisms from $Q$ to~$R$ are the group homomorphisms induced by conjugation by elements of $G$, see \cite[Definition I.2.1]{AKO11}. We recall that if $P$ is a Sylow $p$-subgroup of $G$, then $\mathcal F_P(G)$ is saturated, see \cite[Definition I.2.2]{AKO11}. For further notation and terminology on fusion systems, we refer to \cite{AKO11} and \cite{BLO03}.
\par
The trivial $kG$-module is denoted by $k_G$. If $H\leq G$ is a subgroup, 
$M$ is a $kG$-module and $N$ is a $kH$-module, 
then we write $M^{\ast}:=\Hom_k(M,k)$ for the $k$-dual of $M$,  $M{\downarrow}_H$ for the restriction of $M$ to $H$ and $N{\uparrow}^G$ for the induction of $N$ to $G$. 
Given an $H\leq G$, we denote by $P_H(k_G)$ the $H$-projective cover of the trivial module $k_G$  
and we let $\Omega_H(k_G)$ denote the $H$-relative Heller operator, that is 
$\Omega_H(k_G)= {\mathrm{Ker}}\,( P_H(k_G)
\twoheadrightarrow k_G)$, the kernel of the canonical projection (see \cite{The85}).
We write $B_0(kG)$ for the principal block of $kG$. 
For a $p$-block $B$, we denote by $\Irr(B)$ the set of ordinary irreducible characters in $B$ and by $\IBr(B)$ the set of irreducible Brauer characters in $B$. Further we use the standard notation $k(B):=|\Irr(B)|$ and $l(B):=|\IBr(B)|$.
\par
For a subgroup $H\leq G$ we denote the (Alperin-)Scott $kG$-module with respect to $H$ by ${\mathrm{Sc}}(G,H)$.
By definition ${\mathrm{Sc}}(G,H)$ is the unique indecomposable direct summand
of the induced module ${k_H}{\uparrow}^G$ which contains $k_G$ in its
top (or equivalently in its socle).  If $Q\in \Syl_p(H)$, then $Q$ is a vertex of ${\mathrm{Sc}}(G,H)$ 
and a $p$-subgroup of $G$ is a vertex of ${\mathrm{Sc}}(G,H)$ if and only if it is $G$-conjugate to $Q$. 
It follows that ${\mathrm{Sc}}(G,H)={\mathrm{Sc}}(G,Q)$. We refer the reader to 
\cite[\S2]{Bro85} and  \cite[Chap.4 \S 8.4]{NT88} for these results. Furthermore, we will need the fact that ${\mathrm{Sc}}(G,H)$ is nothing else but the relative $H$-projective cover
$P_H(k_G)$ of the trivial module $k_G$; see \cite[Proposition 3.1]{The85}. In order to produce splendid Morita equivalences between principal 
blocks of two finite groups $G$ and $G'$ with a common defect group $P$, we mainly use Scott modules of the form 
${\mathrm{Sc}}(G\times G', \, \Delta P)$, which are obviously $(B_0(kG),B_0(kG'))$-bimodules by the previous remark.

For further notation and terminology, we refer the reader to the books \cite{Gor68}, \cite{NT88} and \cite{The95}.

\subsection{Equivalences of block algebras} \label{subsec:PuigEq}{\ }
Let $G$ and $H$ be two finite groups, and let $A$ and $B$ be block algebras  of  $kG$ and $kH$
with defect groups $P$ and $Q$, respectively.\par
The algebras $A$ and $B$ are called \emph{splendidly Morita equivalent} (or \emph{Puig equivalent}), if  
there is a Morita equivalence between $A$ and $B$ induced by an $(A,B)$-bimodule $M$ such that $M$, seen 
as a right $k[G\times H]$-module, is a 
$p$-permutation module. In this case, we write $A\sim_{SM} B$. 
Due to a result of Puig (see \cite[Corollary~7.4]{Pui99} and \cite[Proposition~9.7.1]{Lin18}),
the defect groups $P$ and $Q$ are isomorphic (and hence from now on we identify $P$ and $Q$).
Obviously $M$ is indecomposable as a $k(G\times H)$-module.
Further since $_AM$ and $M_B$ are both projective, $M$ has a vertex $R$ which is written as
$R=\Delta (P) \leq G\times H$.  Then, 
this is equivalent to the condition that $A$ and $B$ have source algebras 
which are isomorphic as interior $P$-algebras by the result of Puig and Scott
(see \cite[Theorem~4.1]{Lin01} and \cite[Remark 7.5]{Pui99}).

In particular, we note that if  
the Scott module $M:=\textrm{Sc}(G\times H,\Delta P)$ induces a Morita equivalence between 
the principal blocks $A$ and $B$ of $kG$ and $kH$, respectively,
then this is a splendid Morita equivalence because Scott modules are $p$-permutation modules by definition.\par

Let now $M$ be an $(A,B)$-bimodule and $N$  a $(B,A)$-bimodule. 
Following Brou\'{e} \cite[\S5]{Bro94}, 
we say that the pair $(M,N)$ induces a \emph{stable equivalence of Morita type} between $A$ and~$B$ if $M$ 
and $N$ are projective both as left and right modules, and there are isomorphisms of $(A,A)$-bimodules and $(B,B)$-bimodules
$$M\otimes_{B}N\cong A\oplus X\quad\quad\text{and}\quad\quad N\otimes_{A}M\cong B\oplus Y\,,$$
 respectively, 
where $X$ is a projective $(A,A)$-bimodule and $Y$ is a projective $(B,B)$-bimodule.\\

The following result of Linckelmann will allow us to construct Morita equivalences using stable equivalences of Morita type.

\begin{Theorem}[{}{\cite[Theorem 2.1(ii),(iii)]{Lin96}}]\label{thm:LinckMoritaEq}
Let $A$ and $B$ be finite-dimensional $k$-algebras which are indecomposable non-simple self-injective $k$-algebras. Let $M$ be an $(A,B)$-bimodule inducing a stable equivalence between $A$ and $B$.
\begin{enumerate}
\item[\rm(a)] If $M$ is indecomposable, then for any simple $A$-module $S$, the $B$-module $S\otimes_A M$ is indecomposable and non-projective.
\item[\rm(b)] If for any simple $A$-module $S$, the $B$-module $S\otimes_A M$ is simple, then the functor $-\otimes_A M$ induces a Morita equivalence between $A$ and $B$.
\end{enumerate}
\end{Theorem}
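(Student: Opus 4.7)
The plan rests on two ingredients. First, since $A$ is indecomposable non-simple self-injective, no simple $A$-module can be projective: a simple projective summand of the regular module $A$ would, by self-injectivity, force $A$ to be a matrix algebra over $k$, hence simple, a contradiction; the same holds for $B$. Second, since $_AM$ and $_BN$ are projective, the functors $F:=-\otimes_AM$ and $G:=-\otimes_BN$ are exact on module categories, not just on stable categories. Fix a $(B,A)$-bimodule $N$ such that $M\otimes_BN\cong A\oplus X$ and $N\otimes_AM\cong B\oplus Y$, where $X$, $Y$ are projective bimodules.

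For part~(a), non-projectivity of $S\otimes_AM$ is immediate: if it were projective, tensoring by $N$ would give $S\oplus(S\otimes_AX)$ projective, contradicting $S$ non-projective. Stable indecomposability of $S\otimes_AM$ follows because $(F,G)$ induces a bijection on iso classes of non-projective indecomposables in the stable categories, and $GF(S)\cong S$ in $\underline{\mod}\,A$. To eliminate a projective direct summand in $\mod\,B$, one invokes the indecomposability of $M$ as a bimodule: a splitting $S\otimes_AM=U\oplus P$ with $P$ non-zero projective would, via $-\otimes_BN$ and a Krull--Schmidt argument at the bimodule level, produce a non-trivial decomposition of $M$.

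For part~(b), exactness of $F$ together with $F(S)$ being simple for every simple $S$ implies that $F$ is faithful and preserves composition length. The stable bijection of indecomposables then sends distinct simples of $A$ to distinct simples of $B$, giving an injection on iso classes. Surjectivity follows from a composition-factor argument: if a simple $B$-module $T$ were not in the image of $F$, then $FG(T)\cong T\oplus(T\otimes_BY)$ would contain $T$ among its composition factors, yet every composition factor of any $F(V)$ lies in the image of $F$, a contradiction. Hence $F$ bijects simples, and applying $F$ to $0\to\rad(P_A(S))\to P_A(S)\to S\to 0$ yields $F(P_A(S))=P_B(F(S))$, so that $M=F(A)$ is a progenerator of $\mod\,B$. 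The right adjoint $H:=\Hom_B(M,-)$ is exact because $M_B$ is projective; a direct dimension count using the decomposition of $M$ shows that $HF(S)$ has the same $k$-dimension as $S$, and the adjunction unit $\eta_S\colon S\to HF(S)$ is non-zero since it corresponds under adjunction to the identity of $F(S)$, so $\eta_S$ is an isomorphism by simplicity of $S$. The five-lemma applied to composition series promotes this to a natural isomorphism $\eta\colon\mathrm{id}\Rightarrow HF$ on $\mod\,A$; a symmetric argument makes the counit an isomorphism, and hence $F$ is a Morita equivalence.

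The main obstacle is the argument in part~(a) ruling out projective summands of $S\otimes_AM$: the stable category detects the image only modulo projectives, so the bimodule indecomposability of $M$ must be invoked in an essential way, whereas the rest of both statements flows from exactness and adjunction formalism.
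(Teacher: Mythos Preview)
The paper does not give its own proof of this statement: it is quoted verbatim from Linckelmann's article \cite[Theorem~2.1(ii),(iii)]{Lin96}, so there is nothing to compare against here beyond the original source.

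That said, your proposal has a genuine gap in part~(a), precisely at the step you yourself flag as the main obstacle. You write that a splitting $S\otimes_A M = U\oplus P$ with $P$ non-zero projective would, ``via $-\otimes_B N$ and a Krull--Schmidt argument at the bimodule level'', produce a non-trivial decomposition of $M$. But applying $-\otimes_B N$ to $S\otimes_A M$ yields the $A$-module $S\oplus (S\otimes_A X)$, not a bimodule; there is no bimodule in sight to which Krull--Schmidt could be applied, and no mechanism by which a one-sided decomposition of $S\otimes_A M$ propagates back to a bimodule decomposition of $M$. The indecomposability of $M$ must indeed be used, but not in this way. Linckelmann's actual argument is more delicate: one analyses how projective summands of $V\otimes_A M$ are controlled by the projective summands of $V$ itself when $M$ has no non-zero projective bimodule summand, and this requires separate work (in \cite{Lin96} it goes through an auxiliary result on the interaction of $M$ with projective covers). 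Your sketch does not supply this.

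Your outline for part~(b) is broadly correct in spirit --- exactness of $F$, the bijection on simples, and the adjunction with $H=\Hom_B(M,-)$ are the right ingredients --- but the ``direct dimension count'' showing $\dim_k HF(S)=\dim_k S$ is asserted rather than performed, and it is not as immediate as you suggest. A cleaner route, once the bijection on simples is established, is to observe that $F$ and $G=-\otimes_B N$ are both exact and length-preserving, hence $GF(S)\cong S\oplus(S\otimes_A X)$ forces $S\otimes_A X=0$ for every simple $S$, so $X=0$; symmetrically $Y=0$, and the stable equivalence is already a Morita equivalence.
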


Furthermore, we recall the following fundamental result, originally due to Alperin \cite{Alp76} and Dade \cite{Dad77}, which will provide us with an important source of splendid Morita equivalences in Section~\ref{sec:autgrps}.

\begin{Theorem}[{}{\cite[(3.1) Lemma]{KK02}}]\label{thm:AlperinDade}
Let $\widetilde{G}$ be a finite group. Let $G\trianglelefteq \widetilde{G}$ be a normal subgroup such that $\widetilde{G}/G$ is a $p'$-group and $\widetilde{G}=G\,C_{\widetilde{G}}(P)$, where $P$ is a Sylow $p$-subgroup of $G$. Furthermore, let $\tilde{e}$ and $e$ be the block idempotents corresponding to $B_0(k\widetilde{G})$ and $B_0(kG)$, respectively.  Then the following holds:
\begin{enumerate}
\item[\rm(a)] The map $B_0(kG)\longrightarrow B_0(k\widetilde{G}), a\mapsto a\tilde{e}$ is a $k$-algebra isomorphism, so that $e\tilde{e}=\tilde{e}e=\tilde{e}$.
\item[\rm(b)] The right $k[\widetilde{G}\times G]$-module $B_0(k\widetilde{G})=\tilde{e}k\widetilde{G}=\tilde{e}k\widetilde{G}e$  induces a splendid Morita equivalence between $B_0(k\widetilde{G})$ und $B_0(kG)$.
\end{enumerate}
\end{Theorem}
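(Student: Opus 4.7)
The plan is to establish (a) first, thereby providing the $k$-algebra isomorphism $\phi\colon B_0(kG)\to B_0(k\widetilde{G})$, $a\mapsto a\tilde{e}$, and then to deduce (b) with little extra work once the $p$-permutation structure is identified.

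For (a), I would begin with the preliminary observations that $P$ is a Sylow $p$-subgroup of $\widetilde{G}$ as well (since $[\widetilde{G}:G]$ is a $p'$-number), that a Frattini argument applied to $\widetilde{G}=G\,C_{\widetilde{G}}(P)$ gives $N_{\widetilde{G}}(P)=N_G(P)\,C_{\widetilde{G}}(P)$ and hence $\mathcal{F}_P(G)=\mathcal{F}_P(\widetilde{G})$, and that the principal block idempotent $e$ of $kG$ — characterised in $Z(kG)$ as the unique primitive central idempotent not annihilating the trivial module — satisfies $e\in Z(kG)^{\widetilde{G}}\subseteq Z(k\widetilde{G})$. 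Both $\tilde{e}$ and $e$ do not annihilate the trivial module, so $\tilde{e}\,e\neq 0$; primitivity of $\tilde{e}$ in $Z(k\widetilde{G})$ then forces $e\tilde{e}=\tilde{e}\,e=\tilde{e}$. The centrality of $\tilde{e}$ in $k\widetilde{G}$ makes $\phi(a)=a\tilde{e}$ a well-defined $k$-algebra homomorphism into $B_0(k\widetilde{G})=\tilde{e}\,k\widetilde{G}$.

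The crux of (a), and what I expect to be the main obstacle, is the bijectivity of $\phi$. My plan is to prove surjectivity first by fully exploiting the hypothesis $\widetilde{G}=G\,C_{\widetilde{G}}(P)$: every $\tilde{g}\in\widetilde{G}$ can be written as $\tilde{g}=gc$ with $g\in G$ and $c\in C_{\widetilde{G}}(P)$, so it is enough to show that $c\tilde{e}\in kG\cdot\tilde{e}$ for every $c\in C_{\widetilde{G}}(P)$. This is essentially the statement that $C_{\widetilde{G}}(P)$ acts trivially on $\tilde{e}$ modulo $kG$, and it should be extractable from a source-algebra comparison: under the given hypotheses, a primitive idempotent $i\in (B_0(kG))^P$ with $\Br_P(i)\neq 0$ remains primitive in $(B_0(k\widetilde{G}))^P$, and the resulting interior $P$-algebras $i\,B_0(kG)\,i$ and $i\,B_0(k\widetilde{G})\,i$ are canonically isomorphic as interior $P$-algebras (this uses $\mathcal{F}_P(G)=\mathcal{F}_P(\widetilde{G})$ together with the $p'$-index hypothesis). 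This source-algebra identification forces the dimension equality $\dim_k B_0(kG)=\dim_k B_0(k\widetilde{G})$, which combined with surjectivity yields bijectivity of $\phi$.

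For (b), part (a) gives the identification $B_0(k\widetilde{G})\cong B_0(kG)$ as $k$-algebras via $\phi$, and the bimodule $M:=\tilde{e}\,k\widetilde{G}=\tilde{e}\,k\widetilde{G}\,e$ (the second equality uses $\tilde{e}\,e=\tilde{e}$ together with the centrality of $e$ in $k\widetilde{G}$), viewed as a $(B_0(k\widetilde{G}),B_0(kG))$-bimodule, is isomorphic to the regular bimodule on $B_0(k\widetilde{G})$ via $\phi$ on the right, hence induces a Morita equivalence. To promote this to a splendid Morita equivalence, I observe that $k\widetilde{G}$ itself is a permutation $k[\widetilde{G}\times G]$-module under the action $(g_1,g_2)\cdot x=g_1\,x\,g_2^{-1}$ (with $\widetilde{G}$ as permutation basis; indeed $k\widetilde{G}\cong\Ind_{\Delta G}^{\widetilde{G}\times G}(k)$ since the stabiliser of $1\in\widetilde{G}$ under this action is $\Delta G$). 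Since $\tilde{e}\in Z(k\widetilde{G})$ is a central idempotent, $M=\tilde{e}\,k\widetilde{G}$ is a direct summand of $k\widetilde{G}$ as $k[\widetilde{G}\times G]$-module, hence is itself a $p$-permutation module, completing the splendid Morita equivalence.
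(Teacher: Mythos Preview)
The paper does not prove this theorem; it is quoted as a known result, attributed to Alperin \cite{Alp76} and Dade \cite{Dad77}, with a precise reference to \cite[(3.1) Lemma]{KK02}. There is therefore no proof in the paper against which to compare your proposal.

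Evaluated on its own merits, your treatment of (b) is correct and efficient: once (a) is available, the observation that $k\widetilde{G}\cong\Ind_{\Delta G}^{\widetilde{G}\times G}(k)$ as permutation module and that cutting by the central idempotent $\tilde{e}$ yields a $p$-permutation direct summand is exactly what is needed. Your preliminary reductions in (a) (that $P\in\Syl_p(\widetilde{G})$, that $e$ is $\widetilde{G}$-stable hence central in $k\widetilde{G}$, and that $e\tilde{e}=\tilde{e}$) are also fine.

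The genuine gap is in the bijectivity of $\phi$. Your argument rests on two assertions, neither of which is justified. First, the claim that a source-algebra identification ``forces the dimension equality $\dim_k B_0(kG)=\dim_k B_0(k\widetilde{G})$'' is not valid as stated: isomorphic source algebras yield a Morita equivalence between the blocks, but Morita-equivalent algebras need not have the same $k$-dimension; one would additionally need to control the multiplicity of the local point of $P$ on each block, which you do not address. Second, the surjectivity step --- that $c\tilde{e}\in kG\cdot\tilde{e}$ for every $c\in C_{\widetilde{G}}(P)$ --- is only asserted (``should be extractable from a source-algebra comparison''), and making this precise is essentially equivalent in difficulty to the statement you are trying to prove; in particular, showing that the \emph{same} idempotent $i$ remains a source idempotent for $B_0(k\widetilde{G})$ with $iB_0(kG)i=iB_0(k\widetilde{G})i$ already presupposes most of (a). The classical arguments of Alperin and Dade proceed more directly, typically via Clifford theory: under the hypothesis $\widetilde{G}=G\,C_{\widetilde{G}}(P)$ one shows that every simple $B_0(kG)$-module is $\widetilde{G}$-stable and extends to $\widetilde{G}$, whence restriction gives a bijection between simple modules of the two principal blocks preserving dimensions, and the algebra isomorphism then follows.
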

\medskip

\subsection{The Brauer construction and Brauer indecomposability} \label{subsec:BrauerIndec}{\ }
Given a $kG$-module $V$ and a $p$-subgroup $Q\leq G$, the \emph{Brauer  construction} (or \emph{Brauer quotient}) of $V$ with  respect to $Q$  is defined to be the $kN_G(Q)$-module
$$
V(Q):= V^{Q}\big/ \sum_{R<Q}\Tr^Q_R(V^R)\, ,
$$
where $V^{Q}$ denotes the set of $Q$-fixed points of $V$, and for each proper subgroup $R<Q$, $\Tr^Q_R~:~V^R\longrightarrow V^Q, v\mapsto \sum_{xR\in Q/R}xv$ denotes the relative trace map. See e.g. \cite[\S 27]{The95}. We recall that the Brauer  construction  with  respect  to $Q$ sends  a $p$-permutation $kG$-module $V$ functorially to the $p$-permutation $kN_G(Q)$-module $V(Q)$, see \cite[p.402]{Bro85}.\\

Furthermore, following the terminology introduced in \cite{KKM11}, a $kG$-module $V$ is said to be \emph{Brauer indecomposable} if the $kC_G(Q)$-module $V(Q)\!\downarrow^{N_G(Q)}_{C_G(Q)}$ is indecomposable or zero for each $p$-subgroup $Q\leq G$.
\par
In order to detect Brauer indecomposability, we will use the following two recent results of Ishioka and Kunugi:

\begin{Theorem}[{}{\cite[Theorem 1.3]{IK17}}]\label{thm:IK17Thm1.3}
Let $G$ be a finite group and $P$ a $p$-subgroup of~$G$. Let $M:=\mathrm{Sc}(G,P)$. Assume that the fusion system $\mathcal{F}_P(G)$ is saturated. Then the following assertions are equivalent:
\begin{enumerate}
\item[\rm(i)] $M$ is Brauer indecomposable.
\item[\rm(ii)] $\mathrm{Sc}(N_G(Q),N_P(Q))\!\downarrow^{N_G(Q)}_{QC_G(Q)}$ is indecomposable for each $\mathcal{F}_P(G)$-fully normalised subgroup $Q$ of $P$.
\end{enumerate}
\end{Theorem}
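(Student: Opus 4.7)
The plan is to reduce the global Brauer indecomposability of $M := {\mathrm{Sc}}(G,P)$ to the local assertion about the Brauer quotients of $M$ at the $\mathcal{F}_P(G)$-fully normalised subgroups of $P$. The linchpin is the identification, for each fully normalised $Q \leq P$,
\[
M(Q) \;\cong\; {\mathrm{Sc}}\bigl(N_G(Q),\,N_P(Q)\bigr)
\]
as $kN_G(Q)$-modules, which is a standard consequence of the Brou\'e-type Mackey formula for the Brauer quotient of a $p$-permutation module, combined with the characterisation that $N_P(Q)\in \Syl_p(N_G(Q))$ precisely when $Q$ is fully $\mathcal{F}_P(G)$-normalised (this is where the saturation hypothesis on $\mathcal{F}_P(G)$ is used). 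A second, elementary observation is that $Q$ acts trivially on any Brauer quotient $V(Q)$, so for a $kN_G(Q)$-module of this form the restrictions to $C_G(Q)$ and to $QC_G(Q)$ carry identical (in)decomposability information.

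With these two ingredients in hand, the implication (i)$\Rightarrow$(ii) is immediate: assuming $M$ Brauer indecomposable, $M(Q){\downarrow}_{C_G(Q)}$ is indecomposable or zero for every fully normalised $Q\leq P$; by the triviality of the $Q$-action the same holds for $M(Q){\downarrow}_{QC_G(Q)}$; and the identification above converts this into indecomposability of ${\mathrm{Sc}}(N_G(Q),N_P(Q)){\downarrow}_{QC_G(Q)}$, the zero case being ruled out because a Scott module is never zero.

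For the converse (ii)$\Rightarrow$(i), I would fix an arbitrary $p$-subgroup $R \leq G$ and show $M(R){\downarrow}_{C_G(R)}$ is indecomposable or zero. Since $M$ has vertex $P$, $M(R)=0$ unless $R$ is $G$-conjugate into $P$, so we may assume $R \leq P$. Saturation of $\mathcal{F}_P(G)$ then supplies $g \in G$ with $R^g \leq P$ fully $\mathcal{F}_P(G)$-normalised; conjugation by $g$ transports $M(R){\downarrow}_{C_G(R)}$ isomorphically to $M(R^g){\downarrow}_{C_G(R^g)}$, reducing the claim to a fully normalised subgroup, which is then handled by the identification of $M(R^g)$ together with hypothesis (ii).

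The main obstacle is establishing the Scott module identity $M(Q) \cong {\mathrm{Sc}}(N_G(Q),N_P(Q))$ for fully normalised $Q$. Concretely, one realises $M$ as the indecomposable summand of $k_P{\uparrow}^G$ containing the trivial module in its top, applies Mackey together with the Brauer construction to obtain
\[
(k_P{\uparrow}^G)(Q) \;\cong\; \bigoplus\, k_{N_{P^g}(Q)}{\uparrow}^{N_G(Q)}
\]
where $g$ runs over double coset representatives in $N_G(Q)\backslash G/P$ with $Q\leq P^g$, and picks out the summand attached to the identity double coset, namely $k_{N_P(Q)}{\uparrow}^{N_G(Q)}$. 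Verifying that the Scott summand of this module coincides with the Scott summand of the full Brauer quotient, while the remaining contributions either have strictly smaller vertex or do not contain the trivial module in their heads, is the delicate point; it is precisely here that the condition $N_P(Q)\in\Syl_p(N_G(Q))$ is crucial, for without it the candidate summand would have the wrong vertex and the identification would fail.
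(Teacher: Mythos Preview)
This theorem is not proved in the paper: it is quoted verbatim from Ishioka--Kunugi \cite[Theorem~1.3]{IK17} and used as a black box. There is therefore no proof in the paper to compare your proposal against.

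As a sketch of how the Ishioka--Kunugi result is established, your outline has the right shape, and you correctly flag the identification $M(Q)\cong{\mathrm{Sc}}(N_G(Q),N_P(Q))$ for fully normalised $Q$ as the crux. One caution, though: the paper's later use of \cite[Theorem~3.1]{IK17} (see the proof of Lemma~\ref{BrauerConstruction}) shows that in \cite{IK17} this identification is stated \emph{under the hypothesis that $M$ is already Brauer indecomposable}. So you cannot invoke it as an independent fact in the direction (ii)$\Rightarrow$(i) without risking circularity. What is available unconditionally (cf.\ the use of \cite[Lemmas~3.1 and~2.2]{IK17} in the proof of Lemma~\ref{ScottBrauer}) is only that ${\mathrm{Sc}}(N_G(Q),N_P(Q))$ occurs as a direct summand of $M(Q)$; the genuine work in (ii)$\Rightarrow$(i) is then to use hypothesis~(ii) to rule out any further summands of $M(Q)$. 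Your final paragraph gestures at this via a vertex analysis of the Mackey pieces of $(k_P{\uparrow}^G)(Q)$, but that controls the ambient induced module rather than $M(Q)$ itself, so as written this step is not yet complete.
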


\begin{Theorem}[{}{\cite[Theorem 1.4]{IK17}}]\label{thm:IK17Thm1.4}
Let $G$ be a finite group and $P$ a $p$-subgroup of $G$, $Q$ an $\mathcal{F}_P(G)$-fully normalised subgroup of $P$, and suppose that $\mathcal{F}_P(G)$ is saturated. Assume moreover that there exists a subgroup $H_Q$ of $N_G(Q)$ satisfying the  following two conditions:
\begin{enumerate}
\item[\rm(1)] $N_P(Q)\in\Syl_p(H_Q)$; and 
\item[\rm(2)] $|N_G(Q):H_Q|=p^{a}$ ($a\geq 0$).
\end{enumerate}
Then  $\mathrm{Sc}(N_G(Q),N_P(Q))\!\downarrow^{N_G(Q)}_{QC_G(Q)}$ is indecomposable.
\end{Theorem}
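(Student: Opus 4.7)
Plan. Set $N:=N_G(Q)$, $N_P:=N_P(Q)$, $C:=QC_G(Q)$, $L:=H_Q$, and $M:=\mathrm{Sc}(N,N_P)$; the task is to show that $M\!\downarrow^N_C$ is indecomposable. The strategy is to use $L$ as a bridge to a Scott module with a \emph{Sylow} vertex: by hypothesis~(1), $\mathrm{Sc}(L,N_P)$ is such a module on $L$ with vertex the Sylow $p$-subgroup $N_P$, and by hypothesis~(2) the $p$-power index $[N:L]=p^a$ allows us to relate $\mathrm{Sc}(L,N_P)$ to $M$ via induction from $L$ to $N$. As a preliminary observation, the saturation of $\mathcal F_P(G)$ applied to the fully-normalised $Q$ gives $\Aut_P(Q)\in\Syl_p(\Aut_{\mathcal F_P(G)}(Q))$; under the canonical isomorphism $N/C\cong\mathrm{Out}_{\mathcal F_P(G)}(Q)$ this becomes $N_PC/C\in\Syl_p(N/C)$, so $[N:CN_P]$ is coprime to $p$.

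The first step I would take is to show that $M$ is a direct summand of $\mathrm{Sc}(L,N_P)\!\uparrow^N$: Frobenius reciprocity yields that $k_N$ has multiplicity one in the head of the $p$-permutation module $\mathrm{Sc}(L,N_P)\!\uparrow^N$, so the Scott component $M=\mathrm{Sc}(N,N_P)$ is the unique indecomposable summand containing $k_N$ in its head. Thus, to analyse $M\!\downarrow^N_C$, I would decompose $\mathrm{Sc}(L,N_P)\!\uparrow^N\!\downarrow^N_C$ by Mackey's formula along $(C,L)$-double cosets in $N$: since $C\trianglelefteq N$, each double-coset summand has the form $\mathrm{Sc}(L,N_P)^{g}\!\downarrow^{L^{g}}_{L^{g}\cap C}\!\uparrow^C$ for $g\in[C\backslash N/L]$, with $L^{g}=gLg^{-1}$, and the $C$-conjugate $(N_P\cap C)^{g}$ of $N_P\cap C=QC_P(Q)$ appearing as the candidate vertex. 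The Scott summand $M\!\downarrow^N_C$ is then extracted as the part of this Mackey sum coming from the summand $M$ of $\mathrm{Sc}(L,N_P)\!\uparrow^N$.

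The key input is then a standard Scott-module argument on $L$: because $N_P\in\Syl_p(L)$ and $\mathcal F_{N_P}(L)$ inherits saturation from $\mathcal F_P(G)$, the Brauer correspondent $\mathrm{Sc}(L,N_P)(N_P)\cong k$ forces $\mathrm{Sc}(L,N_P)\!\downarrow^L_{L\cap C}$ to be indecomposable (essentially by the local-control argument underpinning the Sylow-case of Theorem~\ref{thm:IK17Thm1.3} applied to $L$). The delicate part, and what I expect to be the main obstacle, is the vertex-bookkeeping needed to verify that precisely one of the Mackey summands above survives as $M\!\downarrow^N_C$, the others being absorbed into the complementary, strictly-smaller-vertex, summands of $\mathrm{Sc}(L,N_P)\!\uparrow^N$. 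Tracking $M$ through the Brauer construction $M(R)$ at $p$-subgroups $R\leq N_P$, together with the combined use of $[N:L]=p^a$ being a $p$-power and $p\nmid[N:CN_P]$ being coprime to $p$, is what I would rely on to carry out this identification and conclude that $M\!\downarrow^N_C$ is indecomposable.
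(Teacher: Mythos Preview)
The paper does not supply its own proof of this statement: Theorem~\ref{thm:IK17Thm1.4} is quoted from \cite[Theorem~1.4]{IK17} as background material in \S\ref{subsec:BrauerIndec}, with no argument given. There is therefore nothing in the present paper against which to compare your proposal.

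On the proposal itself, two remarks. First, you overlook that hypothesis~(1), $N_P\in\Syl_p(L)$, forces $\mathrm{Sc}(L,N_P)=k_L$ (the trivial module is a direct summand of $k_{N_P}\!\uparrow^L$ whenever $N_P$ is Sylow). Thus your ``bridge'' module is just $k_L$, the induced module is simply $k_L\!\uparrow^N$, your Step~5 is vacuous, and the appeal to ``the Sylow case of Theorem~\ref{thm:IK17Thm1.3} applied to $L$'' is both unnecessary and circular. Second, combining $[N:L]=p^a$ with the fact you correctly extract from saturation, $p\nmid[N:CN_P]$, actually gives $N=CL$: since $N_P\leq L$ we have $CN_P\leq CL$, so $[N:CL]$ divides both the $p'$-number $[N:CN_P]$ and the $p$-power $[N:L]$, hence equals~$1$. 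Consequently the Mackey decomposition of $k_L\!\uparrow^N\!\downarrow_C$ has a \emph{single} term $k_{C\cap L}\!\uparrow^C$, and the ``delicate vertex-bookkeeping over several double cosets'' you anticipate does not arise. What remains is to show that the summand $M=\mathrm{Sc}(N,N_P)$ of $k_L\!\uparrow^N$ restricts indecomposably to $C$ inside $k_{C\cap L}\!\uparrow^C$; this is where the genuine content of the argument in \cite{IK17} lies, and your outline does not address it.
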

\medskip

\subsection{Principal blocks with dihedral defect groups} \label{sec:O2'trivial}\label{sec:2.4}{\ }
Since $O_{2'}(G)$ acts trivially on the principal block of~$G$ it is well-known that $B_0(G)$ and $B_0(G/O_{2'}(G))$ are 
Morita equivalent. Such a Morita equivalence is induced by the $(B_0(k[G/O_{2'}(G)]), B_0(kG))$-bimodule $B_0(k[G/O_{2'}(G)])$, 
which is obviously a $2$-permutation module. Hence these blocks are indeed splendidly Morita equivalent, and we may restrict our 
attention to the case $O_{2'}(G)=\{1\}$.\\
\medskip 

We recall that if $G$ is a finite group with  a dihedral Sylow $2$-subgroup $P$ of order at least $8$, then   
Gorenstein and Walter \cite{GW65}  
proved that $G/O_{2'}(G)$ is isomorphic to either
\begin{enumerate}
   \item[\rm{(D1)}] $P$,
   \item[\rm{(D2)}] the alternating group $\mathfrak A_7$,  or
   \item[\rm{(D3)}] a subgroup of ${\mathrm{P\Gamma L}}_2(q)$ 
   containing ${\mathrm{PSL}}_2(q)$, where $q$ is a power of an odd prime. In other words, one of the following groups:
   \begin{itemize}
      \item[(i)]  $\PSL_2(q)\rtimes C_f$ where $q$ is a power of an odd prime such that $q\equiv\pm 1\pmod{8}$, and $f\geq1$ is a 
suitable odd number;  or
      \item[(ii)] $\PGL_2(q)\rtimes C_f$ where $q$ is a power of an odd prime and $f\geq1$ is a suitable odd number. 
    \end{itemize}  
\end{enumerate}
The fact that $q$  is a power of an odd prime can be found in  \cite[Chapter 6 (8,9)]{Suz86}. Moreover, 
the splitting of case (D3) into (i) and (ii)
follows from the fact that  
$${\mathrm{P}} \Gamma{\mathrm{L}}_2(q)\cong {\mathrm{PGL}}_2(q)\rtimes 
{\mathrm{Gal}}(\mathbb F_{q}/\mathbb F_{r})\,,$$  where $q=r^m$ is a power of an odd prime~$r$ and the Galois group
${\mathrm{Gal}}(\mathbb F_{q}/\mathbb F_{r})$  
is cyclic of order $m$, generated by the Frobenius automorphism 
$F:\mathbb F_q\longrightarrow \mathbb F_q, x\mapsto x^r$. 
This implies that $C_f$ is a cyclic subgroup of 
${\mathrm{Gal}}(\mathbb F_{q}/\mathbb F_{r})$  
generated by a power of $F$, and moreover the requirement that $P$ is dihedral 
forces $f$ to be odd.  (Here we apply \cite[Chapter~6 (8.9)]{Suz86} implying that $C_f$ is of odd order. See also the beginning of \cite[\S16.3]{Gor68}.)

\subsection{Dihedral $2$-groups and $2$-fusion} \label{subsec:fusion}{\ }
Assume $G$ is a finite group having a Sylow $2$-subgroup~$P$ which is  a dihedral $2$-group $D_{2^n}$ 
of order $2^n$ ($n\geq 3$). Write 
$$P:=D_{2^n}:= \langle s,t \mid  s^{2^{n-1}}=t^2=1, tst=s^{-1} \rangle$$
and set $z:=s^{2^{n-2}}$, so that $\left<z\right>=Z(P)$. Then there are three possible fusion systems $\mathcal{F}_P(G)$ \smallskip   on~$P$. 

\begin{enumerate}
\item[\rm(1)] First case: $\mathcal{F}_P(G)= \mathcal{F}_P(P)$. There are exactly three $G$-conjugacy classes of~involutions in $P$: $\{z\}$, $\{s^{2j}t\mid0\leq j\leq 2^{n-2}-1\}$ and  $\{s^{2j+1}t\mid0\leq j\leq 2^{n-2}-1\}$. Moreover, $l(B_0(kG))=1$, that is $B_0(kG)$ possesses exactly one simple module, namely the trivial  module~\smallskip$k_G$.
\item[\rm(2)] Second case: $\mathcal{F}_P(G)= \mathcal{F}_P(\PGL_2(q))$, where  $2(q\pm1)_2=2^n$. There are exactly two $G$-conjugacy classes of involutions in $P$, represented by the elements $z$ and $st$. Note that $t$ is fused with $z$ in this case. Moreover, \smallskip $l(B_0(kG))=2$.
\item[\rm(3)] Third case: $\mathcal{F}_P(G)= \mathcal{F}_P(\PSL_2(q))$, where $(q\pm1)_2=2^n$. There is exactly one $G$-conjugacy class of involutions in $P$, represented by $z$.  Moreover, \smallskip $l(B_0(kG))=3$.
\end{enumerate}
In fact, if $P$ is a $2$-subgroup of $G$, but not necessarily a Sylow $2$-subgroup, and $\mathcal{F}_P(G)$ is saturated, then $\mathcal{F}_P(G)$ is isomorphic to one of the fusion systems in (1), (2), and (3).
We refer the reader to  \cite[\S 7.7]{Gor68}, \cite[\S VII]{Bra66} and \cite[Theorem 5.3]{CG12} for these results.\\

\begin{Lemma}\label{PGL}
Let $G:={\mathrm{PGL}}_2(q)$ for a prime power $q$ such that the  Sylow $2$-subgroups of~$G$ are dihedral of order at least $8$, and let $H\leq G$ be a subgroup isomorphic to $\PSL_2(q)$.
Moreover, let $Q\in \Syl_2(H)$ and $P\in\Syl_2(G)$ such that $P\cap H=Q$. Without loss of generality we may set $P:=\langle s,t \mid  s^{2^{n-1}}=t^2=1, tst=s^{-1} \rangle$ and $Q:=\langle s^2,t \rangle$. Then the following holds:
\begin{enumerate}
\item[\rm(a)] $st$ is an involution in $P\setminus Q$, and moreover, any two involutions in $P\setminus Q$ are \smallskip $P$-conjugate.
\item[\rm(b)] Set $z:=s^{2^{n-2}}$. Then centralisers of involutions in $P$ and $G$ are given as \smallskip  follows:
\begin{itemize}
\item[\rm(i)] $C_P(z)=P$, $C_P(t)=\left<t,z \right>\cong C_2\times C_2$ and \smallskip $C_P(st)=\left<st, z \right>\cong C_2\times C_2$.
\item[\rm(ii)] If $q\equiv 1\pmod{4}$, then $C_G(t)\cong D_{2(q-1)}$ and \smallskip  $C_G(st)\cong D_{2(q+1)}$.
\item[\rm(iii)] If $q\equiv -1\pmod{4}$,  then $C_G(t)\cong D_{2(q+1)}$ and \smallskip  $C_G(st)\cong D_{2(q-1)}$.
\end{itemize}
In particular $C_P(z)\in\Syl_2(C_G(z))$ and $C_P(st)\in\Syl_2(C_G(st))$.
\end{enumerate}
\end{Lemma}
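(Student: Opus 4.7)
First I would establish (a) by direct computation inside the presentation of $P$. The identity $(st)^2 = s(tst) = s\cdot s^{-1} = 1$ shows that $st$ is an involution, and $st \notin Q$ follows from the observation that every element of $Q = \langle s^2, t\rangle$ has normal form $s^{2j}$ or $s^{2j}t$ while the exponent of $s$ in $st$ is odd. The involutions of $P = D_{2^n}$ are $z = s^{2^{n-2}}$ together with the reflections $s^i t$ for $0 \leq i < 2^{n-1}$, so the involutions in $P \setminus Q$ are precisely the reflections $s^i t$ with $i$ odd. Using $ts = s^{-1}t$, one checks that conjugation by $s^m$ sends $s^i t$ to $s^{i - 2m}t$; as $m$ ranges over $\{0,\dots,2^{n-2}-1\}$, the exponent $i - 2m$ covers every odd residue modulo $2^{n-1}$, so all these reflections form a single $\langle s\rangle$-orbit and a fortiori one $P$-orbit.

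Part (b)(i) is similarly internal to $P$. The equality $C_P(z) = P$ is immediate since $z \in Z(P)$. For an arbitrary reflection $r = s^i t$, writing an arbitrary element of $P$ as either $s^j$ or $s^j t$ and computing the commutator with $r$ reduces in both subcases to $s^{2j} = 1$, i.e.\ $j \in \{0, 2^{n-2}\}$; this gives $C_P(r) = \{1, z, r, zr\} \cong C_2 \times C_2$. Applying the result to $r = t$ and to $r = st$ yields both centralisers stated in (i).

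The decisive step is (b)(ii)--(iii), which I would handle via the classical conjugacy theory of $G = \PGL_2(q)$. The group $G$ has exactly two conjugacy classes of involutions, and the centraliser of any involution is the $G$-normaliser of a maximal torus through it, hence dihedral of order $2(q-1)$ in the split case and $2(q+1)$ in the non-split case. Since $t \in Q \subseteq H = \PSL_2(q)$ while $st \in P \setminus Q \subseteq G \setminus H$, the task reduces to identifying which of the two $G$-classes lies inside $H$. For this one lifts a split involution, say $\overline{\mathrm{diag}(1,-1)}$, and tests whether its $\GL_2(q)$-coset meets $\SL_2(q)$: this happens iff $-1$ is a square in $\Fq^\times$, i.e.\ iff $q \equiv 1 \pmod 4$. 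Hence for $q \equiv 1 \pmod 4$ the split involutions lie in $H$ and the non-split ones in $G\setminus H$, while for $q \equiv -1 \pmod 4$ the roles reverse; the four values of $C_G(t)$ and $C_G(st)$ in (ii)--(iii) follow by pairing $t$ with the in-$H$ class and $st$ with the other. The final Sylow claim is then a matter of $2$-parts: $|C_P(z)| = 2^n$ matches $|C_G(z)|_2 = 2(q\mp 1)_2$ by the hypothesis $|P| = 2^n = 2(q\mp 1)_2$, and $|C_P(st)| = 4 = 2 \cdot (q\pm 1)_2$ because the ``opposite'' sign contributes exactly a factor of $2$ to $|G|_2$ under that hypothesis. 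I expect the only subtle point to be bookkeeping: keeping the split/non-split dichotomy consistent with the in-$H$/outside-$H$ dichotomy across both residues of $q$ modulo $4$.
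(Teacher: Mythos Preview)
Your proposal is correct and follows essentially the same approach as the paper: direct computation inside the dihedral presentation for (a) and (b)(i), the standard structure of involution centralisers in $\PGL_2(q)$ for (b)(ii)--(iii), and a $2$-part count for the final Sylow claim. The only difference is that where the paper simply cites \cite[\S4(B)]{GW62} for the identification of $C_G(t)$ and $C_G(st)$, you supply the explicit split/non-split torus argument and the test ``$-1$ is a square in $\F_q^\times$'' to decide which $G$-class of involutions lies in $H$; this is exactly the content behind the citation, so the two proofs coincide.
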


\begin{proof}
By assumption $|G:H|=2$, $P\cong D_{2^n}$ with $n\geq 3$ and $Q\cong D_{2^{n-1}}$. The three $P$-conjugacy classes of involutions in $P$ are 
$$\{s^{2^{n-2}}\}, \{s^{2j}t\mid 0\leq j\leq 2^{n-2}\} \text{ and }\{s^{2j+1}t\mid 0\leq j\leq 2^{n-2}\}\,,$$
where $\{s^{2^{n-2}}\},\{s^{2j}t\mid 0\leq j\leq 2^{n-2}\} \subset Q$ and $\{s^{2j+1}t\mid 0\leq j\leq 2^{n-2}\}\subset P\setminus Q$.
Part~(a) follows. \par
For part (b), (i) is obvious and for (ii) and (iii) we refer to \cite[\S4(B)]{GW62} for the description of centralisers of involutions in $\PGL_2(q)$. 
Now it is clear that $C_P(z)\in\Syl_2(C_G(z))$. If $q\equiv 1\pmod{4}$, then $2\mid\mid q+1$, so that $|C_G(st)|_2=|D_{2(q+1)}|_2=4=|C_P(st)|$, whereas if 
$q\equiv -1\pmod{4}$, then $2\mid\mid q-1$, so that $|C_G(st)|_2=|D_{2(q-1)}|_2=4=|C_P(st)|$. Hence $C_P(st)\in\Syl_2(C_G(st))$.
\end{proof}


\section{Properties of Scott modules}\label{sec:Scott}

\begin{Lemma}\label{p-nilpotent}
Let $G$ and $G'$ be finite $p$-nilpotent groups with a common
Sylow $p$-subgroup~$P$. 
Then 
${\mathrm{Sc}}(G\times G', \, \Delta P)$ induces
a Morita equivalence between $B_0(kG)$ and $B_0(kG')$.
\end{Lemma}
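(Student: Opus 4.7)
The plan is to reduce to the semidirect product case $G=H\rtimes P$ with $H:=O_{p'}(G)$ via Schur--Zassenhaus, identify both principal blocks with $kP$ through the quotient by the $p'$-core, and then compute the principal block component of $k_{\Delta P}{\uparrow}^{G\times G'}$ explicitly; it will turn out to be the regular $(kP,kP)$-bimodule $kP$, which will immediately imply the claim via Linckelmann-style reasoning (in fact, more directly: a regular bimodule trivially induces a Morita self-equivalence).

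First I would observe that the element $e_G:=\frac{1}{|H|}\sum_{h\in H}h\in kG$ is the block idempotent of $B_0(kG)$, that $pe_G=e_Gp$ for every $p\in P$ since $H\trianglelefteq G$, and that the canonical surjection $kG\twoheadrightarrow k[G/H]=kP$ restricts to a $k$-algebra isomorphism $B_0(kG)=e_GkG\xrightarrow{\sim}kP$ sending $e_Gp\mapsto p$. The analogous statement holds for $G'$, $H':=O_{p'}(G')$ and $e_{G'}$, so that $e:=e_G\otimes e_{G'}$ is the block idempotent of $B_0(k(G\times G'))$ and the induced algebra isomorphism
\[
B_0(k(G\times G'))\;=\;ek(G\times G')\;\xrightarrow{\sim}\;kP\otimes kP\;=\;k(P\times P)
\]
is the identity on $\Delta P\leq G\times G'$.

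Next I would use centrality of $e$ to project $k_{\Delta P}{\uparrow}^{G\times G'}$ onto its principal block summand:
\[
e\cdot k_{\Delta P}{\uparrow}^{G\times G'}\;=\;k\otimes_{k\Delta P}ek(G\times G')\;\cong\;k\otimes_{k\Delta P}k(P\times P)\;=\;k_{\Delta P}{\uparrow}^{P\times P}\,,
\]
and under the $(P\times P)$-equivariant bijection $\Delta P(p_1,p_2)\mapsto p_1^{-1}p_2$ the last module is isomorphic to the regular $(kP,kP)$-bimodule $kP$. Its bimodule endomorphism ring is $Z(kP)$, which is local, so the module is indecomposable; being then an indecomposable direct summand of $k_{\Delta P}{\uparrow}^{G\times G'}$ having $k_{G\times G'}$ in its top, it must coincide with $\mathrm{Sc}(G\times G',\Delta P)$ by the defining property of the Scott module recalled in Section~\ref{subsec:Nota}.

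Since the regular $(kP,kP)$-bimodule $kP$ trivially induces the identity Morita self-equivalence of $kP$, the two identifications $B_0(kG)\cong kP\cong B_0(kG')$ then show that $\mathrm{Sc}(G\times G',\Delta P)$ induces a Morita equivalence between $B_0(kG)$ and $B_0(kG')$, as required. The one technical point that demands care is the $\Delta P$-equivariance of the algebra isomorphism $ek(G\times G')\cong k(P\times P)$; this is automatic because the quotient maps $G\twoheadrightarrow G/H\cong P$ and $G'\twoheadrightarrow G'/H'\cong P$ restrict to the identity on the common Sylow complement $P$, so everything else reduces to routine tracking of canonical identifications.
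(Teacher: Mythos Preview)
Your argument is correct and follows essentially the same route as the paper's own proof: both compute the principal-block component of $k_{\Delta P}{\uparrow}^{G\times G'}\cong kG\otimes_{kP}kG'$ and identify it with the regular $(kP,kP)$-bimodule $kP$, using that $B_0(kG)\cong kP\cong B_0(kG')$ for $p$-nilpotent groups. The paper is terser (writing $1_B\cdot kG\otimes_{kP}kG'\cdot 1_{B'}=kP\otimes_{kP}kP\cong kP$ in one line), while you spell out the block idempotent, the $\Delta P$-equivariance, and the indecomposability check explicitly; but the underlying idea and the conclusion are identical.
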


\begin{proof}
Set $M:={\mathrm{Sc}}(G\times G',\,\Delta P)$, 
$B:=B_0(kG)$ and $B':=B_0(kG')$. 
By definition, we have
$$M {\,\Big|\,} {1_B}\cdot kG\otimes_{kP}kG'\cdot {1_{B'}}\,.$$
Since $G$ and $G'$ are $p$-nilpotent $1_B\cdot kG\otimes_{kP}kG'\cdot 1_{B'}= kP\otimes_{kP}kP\cong kP$
as  $(kG,kG')$-bimodules. Now as  $kP$ is indecomposable as a $(kP,kP)$-bimodule, it is also indecomposable as a $(kG,kG')$-bimodule.
Therefore $M\cong kP$ as $(B,B')$-bimodule. 
Therefore $M$ induces a Morita equivalence 
between $B$ and $B'$. 
\end{proof}

\begin{Lemma}\label{Rickard96}\label{ScottBrauer}
Assume that $p=2$.
Let $G$ and $G'$ be two finite groups with a common Sylow $2$-subgroup $P$, and 
assume that  $\mathcal F_P(G)=\mathcal F_P(G')$. Let \,$z$ be an involution in $Z(P)$.
Then
$$ {\mathrm{Sc}}\Big(C_{G}(z)\times C_{G'}(z),\, \Delta P\Big) \,\Big|\,
     \Big( {\mathrm{Sc}}(G\times G', \, \Delta P)\Big)(\Delta \langle z\rangle).
$$ 
\end{Lemma}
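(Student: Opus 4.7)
The plan is to exhibit $\mathrm{Sc}(L, \Delta P)$, with $L := C_G(z) \times C_{G'}(z)$ and $M := \mathrm{Sc}(G \times G', \Delta P)$, as a direct summand of $M(\Delta\langle z\rangle)$ via Broué's correspondence for trivial-source modules.

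I would begin by unpacking the normaliser data forced by $z \in Z(P)$. Since $\langle z\rangle \trianglelefteq P$, one has $\Delta\langle z\rangle \trianglelefteq \Delta P$ and in particular $N_{\Delta P}(\Delta\langle z\rangle) = \Delta P$. From $P \leq C_G(z) \cap C_{G'}(z)$ one obtains $\Delta P \leq L$. A pair $(g, g') \in G \times G'$ normalises the order-two subgroup $\Delta\langle z\rangle$ if and only if it centralises $(z, z)$ under conjugation, so $N_{G \times G'}(\Delta\langle z\rangle) = L$; hence $M(\Delta\langle z\rangle)$ is naturally a $p$-permutation $kL$-module. It is moreover of trivial source: since $M$ is trivial source and $M \mid k_{\Delta P}{\uparrow}^{G \times G'}$, the standard formula for Brauer quotients of permutation modules identifies $M(\Delta\langle z\rangle)$ as a summand of $\bigoplus_{g} k_{L \cap g\Delta P g^{-1}}{\uparrow}^L$, a direct sum of trivial-source $kL$-modules.

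Next I would exploit the centrality of $\Delta\langle z\rangle$ in $\Delta P$ to iterate the Brauer construction and pass to the Broué correspondent at the vertex $\Delta P$. The iteration formula gives
\[
M(\Delta\langle z\rangle)(\Delta P/\Delta\langle z\rangle) \;\cong\; M(\Delta P){\downarrow}^{N_{G \times G'}(\Delta P)}_{N_L(\Delta P)}
\]
as $k[N_L(\Delta P)/\Delta P]$-modules. By Broué's theorem \cite[\S3]{Bro85} applied to the Scott module $M$ of vertex $\Delta P$, combined with the surjection $M \twoheadrightarrow k_{G \times G'}$, the Broué correspondent $M(\Delta P)$ is isomorphic to the projective cover $P_{k[N_{G \times G'}(\Delta P)/\Delta P]}(k)$ of the trivial module. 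Restriction preserves projectivity and sends the trivial module to the trivial module, so $M(\Delta P){\downarrow}_{N_L(\Delta P)}$ is a projective $k[N_L(\Delta P)/\Delta P]$-module with trivial module in its head, hence has $P_{k[N_L(\Delta P)/\Delta P]}(k)$ as a direct summand; consequently $P_{k[N_L(\Delta P)/\Delta P]}(k) \mid M(\Delta\langle z\rangle)(\Delta P/\Delta\langle z\rangle)$.

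Finally, Broué's correspondence applied to the trivial-source $kL$-module $M(\Delta\langle z\rangle)$ establishes a bijection between indecomposable direct summands of $M(\Delta\langle z\rangle)$ with vertex $\Delta P$ and indecomposable projective summands of $M(\Delta\langle z\rangle)(\Delta P)$. Since $P_{k[N_L(\Delta P)/\Delta P]}(k)$ is the Broué correspondent of $\mathrm{Sc}(L, \Delta P)$ (by the same argument applied within $L$), the divisibility above lifts to $\mathrm{Sc}(L, \Delta P) \mid M(\Delta\langle z\rangle)$, as required. The main obstacle is the iteration formula for the Brauer construction together with the identification of the Broué correspondent of a Scott module as the projective cover of the trivial module; both rely on the centrality of $\Delta\langle z\rangle$ in the vertex $\Delta P$ and on the characterisation of $M$ as the $\Delta P$-relative projective cover of $k_{G \times G'}$.
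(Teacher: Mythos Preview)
Your argument is correct and takes a genuinely different route from the paper's proof.

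The paper first invokes \cite[Lemmas~3.1 and~2.2]{IK17}, which require the saturation of $\mathcal F_{\Delta P}(G\times G')$ (and hence the hypothesis $\mathcal F_P(G)=\mathcal F_P(G')$), to obtain
\[
\mathfrak M:=\mathrm{Sc}(L,\Delta P)\,\Big|\,M{\downarrow}_L\,.
\]
It then applies the Brauer construction to both sides and uses that $\Delta\langle z\rangle$ lies in the kernel of $\mathfrak M$, so that $\mathfrak M(\Delta\langle z\rangle)=\mathfrak M$, whence $\mathfrak M\mid M(\Delta\langle z\rangle)$.

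Your approach bypasses the Ishioka--Kunugi lemmas entirely and works purely with Brou\'e's correspondence from \cite{Bro85}: you iterate the Brauer construction via $(M(\Delta\langle z\rangle))(\Delta P)\cong M(\Delta P){\downarrow}_{N_L(\Delta P)}$, identify $M(\Delta P)$ as the projective cover of the trivial $k[N_{G\times G'}(\Delta P)/\Delta P]$-module (the Brou\'e correspondent of the Scott module), observe that its restriction to $N_L(\Delta P)/\Delta P$ is projective with trivial top, and then lift back through Brou\'e's correspondence over $L$. This is more elementary and, notably, never uses the hypothesis $\mathcal F_P(G)=\mathcal F_P(G')$; your proof therefore establishes the lemma in slightly greater generality than stated. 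What the paper's route buys is brevity once the \cite{IK17} machinery is in place (which is needed elsewhere in the article anyway), whereas your route is self-contained within classical $p$-permutation theory.
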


\begin{proof}
Since $\mathcal F_P(G)=\mathcal F_P(G')$, we have $\mathcal F_{\Delta P}(G\times G')\cong \mathcal F_P(G)$.
Since $P$ is a Sylow $2$-subgroup of $G$,  $\mathcal F_P(G)$ is saturated, and  therefore
$\mathcal F_{\Delta P}(G\times G')$ is also saturated.
Since $\Delta\langle z\rangle\leq Z(\Delta P)$, $\Delta\langle z\rangle$
is a fully normalised subgroup of $\Delta P$ with respect to 
$\mathcal F_{\Delta P}({G\times G'})$ by definition. 
Thus it follows from \cite[Lemmas 3.1 and 2.2]{IK17} that
$$
{\mathrm{Sc}}(N_{G\times G'}(\Delta \langle z\rangle), \, N_{\Delta P}
(\Delta \langle z\rangle)) \,\Big|\,
{\mathrm{Sc}}(G\times G', \, \Delta P)
{\downarrow}_{N_{G\times G'}(\Delta \langle z\rangle)}.
$$ 
Since $z$ is an involution in $Z(P)$, the above reads
$$
{\mathrm{Sc}}(C_G(z)\times C_{G'}(z), \, \Delta P) \,\Big|\,
{\mathrm{Sc}}(G\times G', \, \Delta P){\downarrow}_{C_G(z)\times C_{G'}(z)} .
$$
Set $M:={\mathrm{Sc}}(G\times G',\, \Delta P)$ and 
$\mathfrak M:={\mathrm{Sc}}(C_G(z)\times C_{G'}(z), \, \Delta P)$.
Since taking the Brauer construction is functorial,
by the above we have  that
$$
\mathfrak M(\Delta\langle z\rangle)\,\Big|\,M(\Delta\langle z\rangle).
$$
Obviously $\Delta\langle z\rangle$ is in the kernel of the 
$(kC_G(z)\times kC_{G'}(z))$-bimodule
$kC_G(z)\otimes_{kP} kC_{G'}(z)$,
and hence it is in the kernel of $\mathfrak M$, so that $\mathfrak M^{\Delta\langle z\rangle}=\mathfrak M$.
Therefore by definition of the Brauer construction, we have
$
\mathfrak M(\Delta\langle z\rangle)=\mathfrak M.
$
Therefore 
$\mathfrak M\mid M(\Delta\langle z\rangle)$.
\end{proof}

The following is well-known, but does not seem to appear in the literature. For completeness we provide a proof, which is due to N. Kunugi.

\begin{Lemma}\label{Kunugi} 
Let $G$ and $G'$ be finite groups with a common Sylow $p$-subgroup $P$ such that $\mathcal F_P(G)=\mathcal F_P(G')$.
Assume further that $M$ is an indecomposable  $\Delta P$-projective  $p$-permutation $k(G\times G')$-module. 
Let $Q$ be a subgroup of $P$. Then the following are equivalent.
\begin{enumerate}
\item[\rm (i)] 
${\mathrm{Sc}}(G',Q)\,|\, (k_G\otimes_{kG}M)$.
\item[\rm (ii)]
${\mathrm{Sc}}(G\times G',\,\Delta Q)\,|\,M$.
\end{enumerate}
\end{Lemma}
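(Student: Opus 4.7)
The plan is to reduce both implications to the canonical isomorphism of right $kG'$-modules
\begin{equation*}
k_G \otimes_{kG} k{\uparrow}_{\Delta Q}^{G\times G'} \;\cong\; k{\uparrow}_Q^{G'}\,,
\end{equation*}
which can be established either on basis level via the bijection of double cosets $G\backslash(G\times G')/\Delta Q \longleftrightarrow Q\backslash G'$, or more conceptually by combining the adjoint pair $(k_G\otimes_{kG}-,\,\mathrm{Inf}^{G\times G'}_{G'})$ with Frobenius reciprocity. Granted this identification, both implications reduce to bookkeeping with the tops of certain Scott modules, using the elementary fact that the trivial module has multiplicity exactly one in the top of $k{\uparrow}_H^G$ (and hence of $\mathrm{Sc}(G,H)$), by Frobenius reciprocity.

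For (ii)$\Rightarrow$(i), applying the right-exact functor $k_G\otimes_{kG}-$ to the canonical surjection $\mathrm{Sc}(G\times G',\Delta Q)\twoheadrightarrow k_{G\times G'}$ produces a surjection $k_G\otimes_{kG}\mathrm{Sc}(G\times G',\Delta Q)\twoheadrightarrow k_{G'}$. Through the isomorphism above, $k_G\otimes_{kG}\mathrm{Sc}(G\times G',\Delta Q)$ is a direct summand of $k{\uparrow}_Q^{G'}$; since $\mathrm{Sc}(G',Q)$ is the unique indecomposable summand of $k{\uparrow}_Q^{G'}$ with $k_{G'}$ in its top, it must divide $k_G\otimes_{kG}\mathrm{Sc}(G\times G',\Delta Q)$, and a fortiori $k_G\otimes_{kG}M$.

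For (i)$\Rightarrow$(ii), the adjunction
\begin{equation*}
\mathrm{Hom}_{kG'}(k_G\otimes_{kG}M,\,k_{G'})\;\cong\; \mathrm{Hom}_{k(G\times G')}(M,\,k_{G\times G'})
\end{equation*}
together with hypothesis (i) places $k_{G\times G'}$ in the top of $M$. Since $M$ is indecomposable, $\Delta P$-projective and $p$-permutation, the uniqueness of the Scott summand forces $M\cong \mathrm{Sc}(G\times G',\Delta R)$ for some $R\leq P$ with $\Delta R$ a vertex of $M$ (up to $(G\times G')$-conjugation). The same Hom computation shows that $k_{G'}$ has multiplicity one in the top of $k_G\otimes_{kG}M$; by the (ii)$\Rightarrow$(i) argument applied to $\Delta R$, the unique indecomposable summand of $k_G\otimes_{kG}M$ carrying this copy of $k_{G'}$ is $\mathrm{Sc}(G',R)$, and by hypothesis the same summand is $\mathrm{Sc}(G',Q)$. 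Hence $\mathrm{Sc}(G',Q)\cong \mathrm{Sc}(G',R)$, yielding $Q\sim_{G'} R$.

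The key difficulty is the final step, and is the only place where the hypothesis $\mathcal F_P(G)=\mathcal F_P(G')$ is essential: upgrading this $G'$-conjugacy of $Q$ and $R$ to a $(G\times G')$-conjugacy of $\Delta Q$ and $\Delta R$. Given $g'\in G'$ with $Q=R^{g'}$, the equality of fusion systems produces $g\in G$ with $r^{g}=r^{g'}$ for every $r\in R$, so that $(g,g')\in G\times G'$ satisfies $(\Delta R)^{(g,g')}=\Delta Q$. This yields $\mathrm{Sc}(G\times G',\Delta R)\cong\mathrm{Sc}(G\times G',\Delta Q)$, completing (i)$\Rightarrow$(ii).
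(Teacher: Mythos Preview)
Your proof is correct and follows essentially the same route as the paper: both arguments hinge on the identification $k_G\otimes_{kG}k{\uparrow}_{\Delta Q}^{G\times G'}\cong k{\uparrow}_Q^{G'}$, the adjunction $\Hom_{kG'}(k_G\otimes_{kG}M,k_{G'})\cong\Hom_{k(G\times G')}(M,k_{G\times G'})$, the Scott module characterisation via the trivial quotient, and finally the fusion hypothesis to pass from $Q\sim_{G'}R$ to $\Delta Q\sim_{G\times G'}\Delta R$. The only cosmetic difference is that the paper writes the argument as if $M$ might decompose into several Scott and non-Scott summands (which is superfluous given the indecomposability hypothesis), whereas you go directly to $M\cong\mathrm{Sc}(G\times G',\Delta R)$; your version is therefore slightly more streamlined for the statement as given.
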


\begin{proof} By assumption $M\,|\,k_{\Delta R}{\uparrow}^{G\times G'}$, 
where $\Delta R$ is a vertex of $M$ with $R\leq P$. 
Set $N:=k_{\Delta Q}{\uparrow}^{G\times G'}=kG\otimes_{kQ}kG'$. Thus
$N={\mathrm{Sc}}(G\times G',\,\Delta Q)\oplus N_0$
where $N_0$ is a $k(G\times G')$-module which satisfies 
${\mathrm{Hom}}_{k(G\times G')}(N_0,\, k_{G\times G'})=0$, by definition of a Scott module. 
We  have that
\begin{align*}
k_G\otimes_{kG}N &=k_G\otimes_{kG} (kG\otimes_{kQ}kG')
\\
&=
k_G{\downarrow}_Q{\uparrow}^{G'} = k_Q{\uparrow}^{G'}
\\
&=
{\mathrm{Sc}}(G',Q)\oplus X\,,
\end{align*}
where $X$ is a $kG'$-module such that no Scott module can occur as a direct summand of $X$ thanks to Frobenius Reciprocity. Now, let us consider an arbitrary $k(G\times G')$-module $L$ with $L\,|\,N$.
Then,
\begin{align*}
\dim[{\mathrm{Hom}}_{kG'}(k_G\otimes_{kG}L,\,k_{G'})]
&=
\dim[ {\mathrm{Hom}}_{k(G\times G')}(L,\, k_G\otimes_k k_{G'})]
\\
&= 
\dim[{\mathrm{Hom}}_{k(G\times G')}(L, \, k_{G\times G'})]
\\
&
\leq 
\dim[{\mathrm{Hom}}_{G\times G'}(N, \, k_{G\times G'})] = 1
\end{align*}
by adjointness and Frobenius Reciprocity. Hence we have 

$$
\dim[{\mathrm{Hom}}_{k(G\times G')}(L, \, k_{G\times G'})]=
\begin{cases}
1 & \text{ if } {\mathrm{Sc}}(G\times G',\,\Delta Q)\,|\,L\\
0 & \text{ otherwise.}
\end{cases}
$$
Namely,
\begin{align*}
{\mathrm{Sc}}(G', Q)\,|\, (k_G\otimes_{kG}L)
&\text{ \ \ if and only if \,\,}{\mathrm{Sc}}(G\times G',\, \Delta Q)\,|\,L\,.
\end{align*}
Now, we can decompose $M$ into a direct sum of indecomposable $k(G\times G')$-modules
$$
M= M_1\oplus\cdots\oplus M_s\oplus Y_1\oplus\cdots\oplus Y_t
$$
for integers  $s\geq 1$ and $t\geq 0$ , and where for each $1\leq i\leq s$, $M_i={\mathrm{Sc}}(G\times G',\,\Delta Q_i)$
for some $Q_i\leq P$, and for each $1\leq j\leq t$, 
$Y_j\,{\not\cong}\,{\mathrm{Sc}}(G\times G',\,\Delta S)$ for any $S\leq P$.
Then for each $1\leq i\leq s$, we have
$$
k_G\otimes_{kG}M_i =k_G\otimes_{kG}{\mathrm{Sc}}(G\times G',\,\Delta Q_i)
\,{\Big|}\,k_G\otimes_{kG}(kG\otimes_{kQ_i}kG')
$$
where 
$$k_G\otimes_{kG}(kG\otimes_{kQ_i}kG')= k_G{\downarrow}_{Q_i}{\uparrow}^{G'}
= k_{Q_i}{\uparrow}^{G'} \,=\,{\mathrm{Sc}}(G',Q_i)\oplus W_i$$
for a $kG'$-module $W_i$ such that 
${\mathrm{Hom}}_{kG'}(k_{G'},\, W_i)=0$
by Frobenius Reciprocity. 
Now $\mathcal F_P(G)=\mathcal F_P(G')$ implies that $\mathcal F_P(G)=\mathcal F_P(G')\cong\mathcal F_{\Delta P}(G\times G')$.
This means that for $1\leq j,j'\leq s$, we have that $Q_j$ is $G'$-conjugate to $Q_{j'}$  if and only if 
$\Delta Q_j$  is ${(G\times G')}$-conjugate to $\Delta Q_{j'}$.
Therefore the claim follows directly  from the characterisation of Scott modules in \cite[Corollary 4.8.5]{NT88}. 
\end{proof}

\begin{Lemma}\label{Sc-stabEq}
Let $G$ and $G'$ be finite groups with a common Sylow $p$-subgroup $P$ such that $\mathcal F_P(G)=\mathcal F_P(G')$.
Set $M:={\mathrm{Sc}}(G\times G',\,\Delta P)$,
$B:=B_0(kG)$ and $B':=B_0(kG')$.
If 
$M$ induces a stable equivalence of Morita type between
$B$ and $B'$, then the following holds:
\begin{enumerate}
\item[\rm (a)] 
$k_G\otimes_{B}M = k_{G'}$.
\item[\rm (b)]
If  $U$ is an indecomposable $p$-permutation $kG$-module
in $B$ with vertex $1\,{\not=}\,Q\leq P$, then $U\otimes_{B}M$ has, up to isomorphism, a unique
indecomposable direct summand~$V$ with vertex $Q$ and $V$ is again a $p$-permutation module.
\item[\rm (c)]
For any $Q\leq P$,
${\mathrm{Sc}}(G,\,Q)\otimes_{B}M = {\mathrm{Sc}}(G',\,Q)\oplus\text{\rm{(proj)}}$.
\item[\rm (d)]
For any $Q\leq P$,
$\Omega_Q(k_G)\otimes_{B}M = \Omega_Q(k_{G'})\oplus\text{\rm{(proj)}}$.
\end{enumerate}
\end{Lemma}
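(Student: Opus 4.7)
The plan is to prove the four assertions in sequence, with (a) as the base case and (b) feeding (c), which in turn feeds (d). The main tools are Theorem~\ref{thm:LinckMoritaEq}(a), Lemma~\ref{Kunugi}, exactness of $-\otimes_B M$ (since $M$ is projective as a left $B$-module), and standard Brauer-quotient/Mackey bookkeeping for $p$-permutation modules.

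For (a), $k_G$ is simple and non-projective, so Theorem~\ref{thm:LinckMoritaEq}(a) applied to the indecomposable bimodule $M$ makes $k_G\otimes_B M$ indecomposable. Because $M$ is an indecomposable $\Delta P$-projective $p$-permutation $k(G\times G')$-module, Lemma~\ref{Kunugi} with $Q=P$ forces $\mathrm{Sc}(G',P)\mid k_G\otimes_{kG}M=k_G\otimes_B M$, whence equality by indecomposability. Higman's criterion then identifies the unique summand of $k_P{\uparrow}^{G'}$ with trivial top as $k_{G'}$, so $\mathrm{Sc}(G',P)\cong k_{G'}$.

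For (b), indecomposable non-projective objects correspond under the stable equivalence, giving $U\otimes_B M=V\oplus\Pi$ with $V$ indecomposable non-projective and $\Pi$ projective. Tensor products of $p$-permutation modules over a common group algebra remain $p$-permutation (realise both factors as summands of permutation modules and combine the $G$- and $(G\times G')$-sets), whence $V$ is $p$-permutation. The delicate point is the vertex identification: a Brauer-construction computation shows that $(U\otimes_B M)(\Delta R)$ is governed by $U(R)\otimes M(\Delta R)$ (up to standard tweaks), and since $M(\Delta R)\neq 0$ precisely for $R\leq_P P$, the largest $R$ with $V(R)\neq 0$ coincides with a vertex of $U$, namely $Q$ up to $G'$-conjugacy. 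Self-duality $M^\ast\cong M$ of Scott modules makes this argument symmetric and furnishes both inclusions $R\leq_{G'} Q$ and $Q\leq_G R$. This is the main obstacle; the remaining parts are formal consequences.

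For (c), take $U=\mathrm{Sc}(G,Q)$ in (b) to obtain $V\oplus\Pi$ with $V$ indecomposable $p$-permutation of vertex $Q$. If $Q=1$, then $\mathrm{Sc}(G,1)=P(k_G)$ is projective and tensoring the cover $P(k_G)\twoheadrightarrow k_G$ with $M$ (together with (a)) exhibits $P(k_{G'})=\mathrm{Sc}(G',1)$ as a summand. For $Q\neq 1$, the canonical surjection $\mathrm{Sc}(G,Q)\twoheadrightarrow k_G$ does not factor through any projective: such a factorisation would go through $P(k_G)$ and, by Nakayama, produce a surjection $\mathrm{Sc}(G,Q)\twoheadrightarrow P(k_G)$ that splits, contradicting indecomposability of the non-projective $\mathrm{Sc}(G,Q)$. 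Tensoring with $M$ and using (a) then transports this to a non-zero stable morphism $\beta\colon\mathrm{Sc}(G,Q)\otimes_B M\to k_{G'}$ whose restriction to $V$ must be non-zero (else $\beta$ factors through $\Pi$). Hence $\Hom_{kG'}(V,k_{G'})\neq 0$; combined with $V\mid k_Q{\uparrow}^{G'}$, the uniqueness of the summand with trivial top gives $V\cong\mathrm{Sc}(G',Q)$.

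Finally, for (d), apply the exact functor $-\otimes_B M$ to $0\to\Omega_Q(k_G)\to\mathrm{Sc}(G,Q)\to k_G\to 0$ and invoke (a) and (c) to rewrite the result as
$$0\to\Omega_Q(k_G)\otimes_B M\to\mathrm{Sc}(G',Q)\oplus\Pi\xrightarrow{(\phi,\psi)} k_{G'}\to 0,$$
with $\Pi$ projective. The argument in (c) forces $\phi\neq 0$, so $\phi=\lambda\phi_{\mathrm{can}}$ for some $\lambda\in k^{\times}$, where $\phi_{\mathrm{can}}$ denotes the canonical surjection. Using projectivity of $\Pi$ to lift $\psi$ through $\phi$ to $\tilde\psi\colon\Pi\to\mathrm{Sc}(G',Q)$ and then applying the automorphism $(x,\pi)\mapsto(x-\lambda^{-1}\tilde\psi(\pi),\pi)$ of $\mathrm{Sc}(G',Q)\oplus\Pi$ converts the surjection to $(\phi,0)$, whose kernel is $\ker\phi\oplus\Pi=\Omega_Q(k_{G'})\oplus\Pi$, establishing the claim.
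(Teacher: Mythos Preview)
Your argument tracks the paper's closely in structure and is correct in (a), (c), and (d). In (c) your ``the surjection does not factor through a projective'' is exactly the paper's computation $\dim\underline{\Hom}_B(\mathrm{Sc}(G,Q),k_G)=1$ rephrased, and in (d) you have spelled out the ``stripping-off'' lemma that the paper simply cites.

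The one place where your write-up is genuinely looser than the paper is (b). The expression $(U\otimes_B M)(\Delta R)$ is not literally well-formed: $U\otimes_B M$ is a $kG'$-module, and $\Delta R\leq G\times G'$ does not act on it. There \emph{is} a formula of the shape you intend (for a $\Delta P$-projective $p$-permutation bimodule one has, under suitable hypotheses, $(U\otimes_{kG}M)(R)\cong U(R)\otimes_{kC_G(R)}M(\Delta R)$ as $kC_{G'}(R)$-modules), but it takes work to state and prove, and your ``up to standard tweaks'' hides exactly this. Moreover, the two inclusions you produce, $R\leq_{G'}Q$ and $Q\leq_G R$, live in different groups; to conclude $R=_{G'}Q$ you must invoke $\mathcal F_P(G)=\mathcal F_P(G')$ explicitly to transport the $G$-conjugacy to a $G'$-conjugacy. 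The paper sidesteps all of this with a direct Mackey argument: from $M\mid kG\otimes_{kP}kG'$ one gets
\[
V\ \Big|\ U{\downarrow}_P{\uparrow}^{G'}\ \Big|\ \bigoplus_{g\in[Q\backslash G/P]}k_{Q^g\cap P}{\uparrow}^{G'},
\]
so $V$ is $p$-permutation with a vertex $R\leq Q^g\cap P$ for some $g\in G$; the fusion hypothesis converts $gRg^{-1}\leq Q$ into a $G'$-conjugacy $R\leq_{G'}Q$, and the reverse bound follows by the same reasoning applied to $M^\ast$. This is both shorter and avoids the Brauer-quotient bookkeeping you gestured at.
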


\begin{proof}{\ }
(a) Apply Lemma~\ref{Kunugi} to the case that $Q=P$. Then Condition (ii) is trivially satisfied. Thus 
we have $k_{G'}\,|\,(k_G\otimes_B M)$, since ${\mathrm{Sc}}(G',P)=k_{G'}$. Hence
Theorem \ref{thm:LinckMoritaEq}(a) yields \smallskip  $k_{G'} = k_G\otimes_{B}M$.

\smallskip

(b) Let $U$ be an indecomposable $p$-permutation $kG$-module
with vertex $Q$.
Since  $M$ induces a stable equivalence of Morita type between $B$ and $B'$, $U\otimes_{B}M$ has a unique non-projective indecomposable  direct
summand, say $V$. Then
\begin{align*}
V\,\Big|\, (U\otimes_{kG}M)&\,{\Big|}\,[U\otimes_{kG}(kG\otimes_{kP}kG')]\\
&=\,
U{\downarrow}_P{\uparrow}^{G'}\,\Big|
\,{k_Q}{\uparrow}^G{\downarrow}_P{\uparrow}^{G'}=\bigoplus_{g\in [Q\backslash G/P]}\ k_{Q^g\cap P}{\uparrow}^{G'}
\end{align*}
by the Mackey decomposition.  Hence $V$ is a $p$-permutation $k{G'}$-module which is 
${(Q^g\cap P)}$-projective for an element $g\in G$.
Thus there is a vertex $R$ of $V$ with $R\leq Q^g\cap P$. 
This means that $gRg^{-1}\leq Q\cap gPg^{-1}\leq Q\leq P$.
Then, since $\mathcal F_P(G)=\mathcal F_P(G')$, there is an element
$g'\in G'$ such that $grg^{-1}=g'r{g'}^{-1}$ for every $r\in R$.
Hence ${g'}R{g'}^{-1}$ is also a vertex of $V$ and 
$g'R{g'}^{-1}=gRg^{-1}\leq Q$, and hence $R \leq_{G'} Q$.

Similarly, we obtain that $Q\leq_G R$ since $M^{\ast}$ induces a stable equivalence of
Morita type between $B'$ and $B$. This implies that $R=_{G'}Q$ and \smallskip $R=_G Q$.

\smallskip

(c) Set $F:=-\otimes_{B}M$, the functor inducing the stable equivalence of 
Morita type between $B$ and $B'$.
Fix $Q$ with $1\,{\not=}\,Q\leq P$, 
and set $U_Q:={\mathrm{Sc}}(G,Q)$. Then, we have
\begin{align*}
1 &=\,\dim [\, {\mathrm{Hom}}_{B}(U_Q,\,k_G)]
\\
&=\, \dim [\, {\underline{\mathrm{Hom}}}_{B}(U_Q,\,k_G)]
\\
&=\, \dim [\, {\underline{\mathrm{Hom}}}_{B'}\Big(F(U_Q),\,F(k_G)\Big)]
\\
&=
\, \dim [\, {\underline{\mathrm{Hom}}}_{B'}\Big(F(U_Q), \, k_{G'}\Big)]
\\
&=
\, \dim [\, {\mathrm{Hom}}_{B'}\Big(F(U_Q), \, k_{G'}\Big)]\,,
\end{align*}
where the second equality holds because $k_G$ is simple, and the last but one equality holds by (a).
Let $V_Q$ be the unique (up to isomorphism) non-projective 
indecomposable direct summand
of $F(U_Q)$. By (b), we know that $Q$ is a vertex of $V_Q$. Moreover, by the above, we have that 
$$\dim[{\mathrm{Hom}}_{B'}(V_Q,\, k_{G'})]\leq 1\,.$$
We claim that in fact equality holds. Indeed, if $\dim[{\mathrm{Hom}}_{B'}(V_Q,\, k_{G'})]=0$, then the above argument implies
that 
$$\dim[{\mathrm{Hom}}_{kG}(F^{-1}(V_Q), \, k_G)]=0\,$$
as well, 
but this is a contradiction
since $U_Q$ is a direct summand of $F^{-1}(V_Q)$, as 
the functor $F$ gives a stable equivalence between $B$ and $B'$.  Hence the dimension is one, and we conclude that $V_Q={\mathrm{Sc}}(G',Q)$.

Next assume  that $Q=1$, so that 
${\mathrm{Sc}}(G,Q)={\mathrm{Sc}}(G,1)=P(k_G)$.
Since
$$P(k_G)\otimes_{kG}M \,\Big|\, P(k_G)\otimes_{kG}kG\otimes_{kP}kG' 
= P(k_G){\downarrow}_P{\uparrow}^{G'}$$
 by definition of a Scott module,
$P(k_G)\otimes_{kG}M$ is a projective $kG'$-module. Moreover, 
it follows from the adjointness and (a) that
$$
\qquad  \dim[{\mathrm{Hom}}_{kG'}(P(k_G)\otimes_{kG}M, \, k_{G'})]
= \dim[{\mathrm{Hom}}_{kG}(P(k_G), \, k_{G'}\otimes_{kG'}M^*)]=1$$
and hence
$P(k_G)\otimes_{kG}M=P(k_{G'})\oplus\mathcal P$
where $\mathcal P$ is a projective $B'$-module 
that does not have $P(k_{G'})$ as a direct summand.  
\smallskip 

(d) Recall that ${\mathrm{Sc}}(G,Q)=P_Q(k_{G})$, ${\mathrm{Sc}}(G',Q)=P_Q(k_{G'})$, and 
$\Omega_Q(k_G)=P_Q(k_{G})/k_G$ and $\Omega_Q(k_{G'})=P_Q(k_{G'})/k_{G'}$. Therefore (d) follows directly from (a), (c)
and the stripping-off method \cite[(A.1) Lemma]{KMN11}.
\end{proof}


\section{Constructing stable equivalences of Morita type}\label{sec:stableeq}


We start with the following gluing result which will allow us to construct stable equivalences of Morita type. It is essentially due to 
Brou{\'e} (\cite[6.3.Theorem]{Bro94}), Rouquier 
(\cite[Theorem 5.6]{Rou01}) and Linckelmann (\cite[Theorem 3.1]{Lin01}). We aim at using equivalence (iii), 
which slightly generalises  the statement of \cite[Theorem 5.6]{Rou01}. 
Since a statement under our hypotheses does not seem to appear in the literature, 
we give a proof for completeness.

\begin{Lemma}
\label{orderP}
Let $G$ and $G'$ be finite groups with a common Sylow $p$-subgroup $P$,
and assume that $\mathcal F_P(G)=\mathcal F_P(G')$.
Set $M:={\mathrm{Sc}}(G\times G',\,\Delta P)$, 
$B:=B_0(kG)$ and $B':=B_0(kG')$.
Further, for each subgroup $Q\leq P$ we set $B_Q:=B_0(kC_G(Q))$
and $B'_Q:=B_0(kC_{G'}(Q))$.
Then, the following three conditions are equivalent.
\begin{enumerate}
\item[\rm(i)] The pair $(M,\,M^*)$ induces a stable equivalence of Morita type
between $B$ and $B'$.
\item[\rm(ii)] For every non-trivial subgroup $Q\leq P$,
the pair $(M(\Delta Q),\, M(\Delta Q)^*)$ 
induces a Morita equivalence between $B_Q$ and $B'_Q$.
\item[\rm(iii)] For every cyclic subgroup $Q\leq P$ of order $p$, 
the pair $(M(\Delta Q),\, M(\Delta Q)^*)$ 
induces a Morita equivalence between $B_Q$ and $B'_Q$.
\end{enumerate}
\end{Lemma}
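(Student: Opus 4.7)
The implication (ii)$\Rightarrow$(iii) is immediate since cyclic subgroups of order $p$ are non-trivial. My plan is to first establish (i)$\Leftrightarrow$(ii) by standard Brauer-quotient gluing along the lines of Brou\'e and Linckelmann, and then to reduce (iii) to (ii) by a single descent to centralisers via the already-proved bidirectional implication in the reduced setting.

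For (i)$\Leftrightarrow$(ii), the key identity I would exploit is the compatibility of the Brauer construction with tensor products of $p$-permutation bimodules having diagonal vertex: for every non-trivial $Q \leq P$ there is a natural isomorphism
\[
(M \otimes_{B'} M^*)(\Delta Q) \;\cong\; M(\Delta Q) \otimes_{B'_Q} M(\Delta Q)^*
\]
of $(B_Q, B_Q)$-bimodules, combined with the Brauer correspondence identification $B(\Delta Q) \cong B_Q$ for the principal block. Assuming (i), decomposing $M \otimes_{B'} M^* \cong B \oplus X$ with $X$ projective and taking Brauer quotients annihilates $X$ (projective bimodules have no non-trivial Brauer quotient) and delivers (ii). Conversely, assuming (ii), the $p$-permutation $(B,B)$-bimodules $M \otimes_{B'} M^*$ and $B$ have identical Brauer quotients at every non-trivial $\Delta Q$; by the classification of $p$-permutation modules via their Brauer quotients they differ only by a projective summand, which is (i).

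For (iii)$\Rightarrow$(ii), fix a non-trivial $Q \leq P$. If $|Q| = p$, then $Q$ is cyclic and (ii) for $Q$ is exactly (iii). Otherwise $|Q| > p$, and I choose a subgroup $Z \leq Z(Q)$ of order $p$, which exists since $Z(Q)$ is a non-trivial $p$-group; up to replacing $Q$ by a $G$-conjugate I further arrange that $Z$ is $\mathcal{F}_P(G)$-fully centralised. Then $C_P(Z)$ is a common Sylow $p$-subgroup of $C_G(Z)$ and $C_{G'}(Z)$, and the fusion systems they induce on $C_P(Z)$ agree. By (iii), $M(\Delta Z)$ induces a Morita, hence in particular a stable, equivalence between $B_Z$ and $B'_Z$, and by Lemma~\ref{Rickard96} it coincides, up to projective summands, with $\mathrm{Sc}(C_G(Z) \times C_{G'}(Z),\, \Delta C_P(Z))$. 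Applying the already-established (i)$\Rightarrow$(ii) to the reduced pair $(C_G(Z), C_{G'}(Z))$ and the bimodule $M(\Delta Z)$, together with the iterated Brauer construction formula $M(\Delta Z)(\Delta Q) \cong M(\Delta Q)$ and the identity $C_{C_G(Z)}(Q) = C_G(Q)$ (since $Z \leq Q$), yields that $M(\Delta Q)$ induces a Morita equivalence between $B_Q$ and $B'_Q$, which is (ii).

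The main obstacle I anticipate is the bookkeeping required to choose $Z$ so that $Z$ is both contained in $Z(Q)$ and $\mathcal{F}_P(G)$-fully centralised, and to identify $M(\Delta Z)$ precisely with the relevant Scott module on the centralisers rather than merely as containing one as a summand; this identification is what allows the descent step to the pair $(C_G(Z), C_{G'}(Z))$ to feed into the standard gluing formalism. Once these points are handled, the three conditions are equivalent.
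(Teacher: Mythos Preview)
Your treatment of (i)$\Leftrightarrow$(ii) and (ii)$\Rightarrow$(iii) matches the paper: both simply cite \cite[Theorem~3.1]{Lin01} for the first equivalence and observe the second is trivial. The route you take for (iii)$\Rightarrow$(ii), however, is genuinely different.

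The paper does not descend to a centraliser and reapply the gluing equivalence there. Instead it fixes a normal series $C_p\cong Q_1\vartriangleleft Q_2\vartriangleleft\cdots\vartriangleleft Q_m=Q$ and inducts on $m$. With $R:=Q_{m-1}$, the inductive hypothesis gives $B_R\cong M(\Delta R)\otimes_{B'_R}M(\Delta R)^*$; applying the Brauer construction at $\Delta Q$ to both sides and combining $B_R(\Delta Q)=B_Q$, Rickard's tensor formula $(M(\Delta R)\otimes_{B'_R}M(\Delta R)^*)(\Delta Q)=M(\Delta R)(\Delta Q)\otimes_{B'_Q}M(\Delta R)(\Delta Q)^*$, and transitivity $M(\Delta R)(\Delta Q)=M(\Delta Q)$, yields $B_Q\cong M(\Delta Q)\otimes_{B'_Q}M(\Delta Q)^*$, which is the desired Morita equivalence by \cite[Theorem~2.1]{Ric96}. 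No fusion-system hypotheses, no Sylow conditions on centralisers, and no identification of $M(\Delta Z)$ with a Scott module are needed.

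Your one-step descent to $C_G(Z)$ is viable, but the two obstacles you flag are real and not disposed of by the tools you cite. Lemma~\ref{Rickard96} only shows the centraliser Scott module is a \emph{summand} of $M(\Delta Z)$; equality up to projectives would require Brauer indecomposability, which this lemma is meant to precede (and which the paper establishes only later, and only for dihedral $2$-groups). So you cannot literally reapply the lemma as stated; you must invoke Linckelmann's general theorem for the $p$-permutation bimodule $M(\Delta Z)$ directly. Likewise, arranging $Z\leq Z(Q)$ fully $\mathcal F$-centralised while keeping $Q\leq P$ needs the extension axiom for saturated fusion systems together with a conjugation argument to transport the conclusion back to the original $Q$. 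The paper's normal-series induction sidesteps all of this bookkeeping and is correspondingly shorter.
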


\begin{proof}
(i)$\Leftrightarrow$(ii) is a special case of \cite[Theorem 3.1]{Lin01}, and (ii)$\Rightarrow$(iii) is trivial.

\smallskip

(iii)$\Rightarrow$(ii):
Take an arbitrary non-trivial subgroup $Q\leq P$. 
Then there is a normal series
\begin{equation*}
C_p\cong Q_1\vartriangleleft Q_2\vartriangleleft \cdots\vartriangleleft Q_m=Q
\end{equation*}
for an integer $m\geq 1$. 
We shall prove (ii) by induction on $m$.
If $m=1$, then $M(\Delta Q)$ induces a Morita equivalence
between $B_Q$ and $B'_Q$ by (iii). 

Next assume that $m\geq 2$ and (ii) holds for $m-1$, and set $R:=Q_{m-1}$. 
Namely, by the inductive hypothesis 
$M(\Delta R)$ realises a Morita equivalence between $B_R$ and $B'_R$.
That is to say,  we have
\begin{equation}\label{LocalMorita}
B_R\cong M(\Delta R)\otimes_{B'_R}\,M(\Delta R)^*.
\end{equation}
Moreover, 
\begin{equation}\label{transitive2}
B_R(\Delta Q)=B_Q
\end{equation}
since $(kC_G(R))(\Delta Q)=kC_G(Q)$
(note that since $R\vartriangleleft Q$, $kC_G(R)$ is a right $k\Delta Q$-module).  
Further,  it follows from \cite[proof of Theorem 4.1]{Ric96} that
\begin{equation}\label{Ric}
\Big( M(\Delta R)\otimes_{B'_R}\,M(\Delta R)^* \Big)(\Delta Q)
\ = \ \left( M(\Delta R)\right)(\Delta Q) \otimes_{B'_Q}\, [\left( M(\Delta R)\right)(\Delta Q)]^*.
\end{equation}
Recall that by \cite[Proposition 1.5]{BP80b} we have
\begin{equation}\label{transitive} 
\left( M(\Delta R)\right)(\Delta Q) = M(\Delta Q).
\end{equation}
Hence 
\begin{align*}  
B_Q &=(B_R)(\Delta Q) 
\quad\text{ by (\ref{transitive2}) }
\\
&= \Big(M(\Delta R)\otimes_{B'_R}\,M(\Delta R)^*\Big)(\Delta Q)
\quad \text{ by (\ref{LocalMorita})}
\\
&=\left( M(\Delta R)\right)(\Delta Q) \otimes_{B'_Q}\, [\left( M(\Delta R)\right)(\Delta Q)]^*                            
\quad\text{ by (\ref{Ric})}
\\
&= M(\Delta Q)\otimes_{B'_Q}\, M(\Delta Q)^*
\quad\text{ by (\ref{transitive})}\,.
\end{align*}
Thus, by making use of \cite[Theorem 2.1]{Ric96}, 
we obtain that  the pair $(M(\Delta Q),\, M(\Delta Q)^*)$ induces a Morita equivalence
between $B_Q$ and $B'_Q$. 
\end{proof}
\bigskip

\noindent \boxed{\text{From now on and until the end of this article we assume that $k$ has 
characteristic $2$.}}\\
\bigskip

The following is an easy application of the
Baer-Suzuki theorem, which is essential to treat dihedral defect groups.

\begin{Lemma}\label{KeyLemma}
Let $G$ be a finite group and let $Q$ be  a normal 
$2$-subgroup of $G$ such that $G/Q\cong \mathfrak S_3$. 
Assume further that there is an involution
$t\in G\setminus Q$. Then $G$ has a subgroup $H$ such that 
$t\in H\cong \mathfrak S_3$.
\end{Lemma}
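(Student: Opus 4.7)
The plan is to apply the Baer--Suzuki theorem to the involution $t$ and then extract a copy of $\mathfrak{S}_3$ from the dihedral group generated by $t$ and a suitable conjugate. Explicitly, first I would pin down $O_2(G)$: since $O_2(G)/Q$ is a normal $2$-subgroup of $G/Q\cong\mathfrak{S}_3$ and $O_2(\mathfrak{S}_3)=1$, we get $O_2(G)=Q$. By hypothesis $t\in G\setminus Q=G\setminus O_2(G)$, so the Baer--Suzuki theorem (for $p=2$) produces some $g\in G$ with $D:=\langle t,\,t'\rangle$ \emph{not} a $2$-group, where $t':=gtg^{-1}$.

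Next, $D$ is generated by two involutions, hence is dihedral of order $2n$ with $n=\mathrm{ord}(tt')$, and the assertion that $D$ is not a $2$-group means $n$ has an odd prime divisor. To locate a factor of $3$ in $n$, I pass to $G/Q$: the images $\bar{t},\bar{t}'$ are involutions in $\mathfrak{S}_3$. If they coincided then $tt'\in Q$, forcing $D$ to be a $2$-group and contradicting Baer--Suzuki; thus $\bar{t}\neq\bar{t}'$ are two distinct transpositions in $\mathfrak{S}_3$, and their product $\overline{tt'}$ is a $3$-cycle. Hence $3\mid\mathrm{ord}(tt')=n$.

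Finally, put $x:=(tt')^{n/3}$, an element of order exactly $3$. The standard dihedral relation $t(tt')t=t't=(tt')^{-1}$ shows that $t$ inverts $tt'$, and therefore inverts $x$. Consequently
$$H:=\langle t,x\rangle=\langle x\rangle\rtimes\langle t\rangle\cong C_3\rtimes C_2\cong\mathfrak{S}_3,$$
and $t\in H$ by construction, as required.

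I don't anticipate a serious obstacle: once Baer--Suzuki supplies a dihedral subgroup $D=\langle t,t'\rangle$ of order $2n$ with $n$ not a power of $2$, divisibility by $3$ drops out of the $\mathfrak{S}_3$-quotient for free, and the remaining step is elementary dihedral arithmetic. The only point that needs care is the identification $O_2(G)=Q$, which is what legitimises the appeal to Baer--Suzuki.
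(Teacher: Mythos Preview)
Your proof is correct and follows essentially the same route as the paper: identify $Q=O_2(G)$, apply Baer--Suzuki to obtain a dihedral subgroup $D=\langle t,t'\rangle$ that is not a $2$-group, and extract an $\mathfrak{S}_3$ generated by $t$ and a suitable power of $tt'$. The only difference is cosmetic: the paper deduces $3\mid |D|$ directly from the fact that $3$ is the sole odd prime dividing $|G|=6|Q|$, whereas you obtain $3\mid\mathrm{ord}(tt')$ by passing to $G/Q$ and observing that $\overline{tt'}$ is a $3$-cycle; both arguments are equally short and valid.
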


\begin{proof}
Obviously  $Q=O_2(G)$ since $G/Q\cong\mathfrak S_3$.
Therefore, by the Baer-Suzuki theorem (see \cite[Theorem 3.8.2]{Gor68}),
there exists an element $y\in G$ such that
$y$ is conjugate to $t$ in $G$ and the group $\widetilde{H}:=\langle t,y\rangle$
is not a $2$-group. Therefore $6\mid |\widetilde{H}|$, and since $\widetilde{H}$ 
is generated by two involutions, it is a dihedral group of order $3\cdot 2^a$ for 
some positive integer $a$, 
that is $\widetilde H = C_{3{\cdot}2^{a-1}}\rtimes\langle t\rangle$
(see \cite[Theorem 9.1.1]{Gor68}).  
Seeing  $\widetilde{H}$ as generated by $yt$ and $t$, it 
follows 
immediately that $\widetilde{H}$ has a dihedral subgroup of order $6$, say 
$H$ generated by $t$ and a suitable power of 
$yt$.  The claim follows.
\end{proof}

\begin{Corollary}\label{KeyCor}
Let $G$ be a finite group with a dihedral $2$-subgroup $P$ of order at least $8$, and let $Q\lneq P$ such that 
$Q\cong C_2\times C_2$.
Assume  moreover that $C_G(Q)$ is $2$-nilpotent and that $N_G(Q)/C_G(Q)\cong \mathfrak S_3$.
Then there exists a subgroup $H$ of $N_G(Q)$ such that  $N_P(Q)$ is a Sylow $2$-subgroup of $H$ and $|N_G(Q):H|$ is a power of $2$ (possibly $1$). 
\end{Corollary}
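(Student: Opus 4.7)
The plan is to pass to a well-chosen quotient in which Lemma~\ref{KeyLemma} can be applied. Set $K:=O_{2'}(C_G(Q))$. Since $K$ is characteristic in $C_G(Q)\trianglelefteq N_G(Q)$, $K$ is normal in $N_G(Q)$, and hence so is the product $QK$ (both $Q$ and $K$ being normal in $N_G(Q)$, with $Q\cap K=1$ so that $|QK|=4|K|$). Form $\widetilde N:=N_G(Q)/QK$ and let $\widetilde C$ denote the image of $C_G(Q)$ in $\widetilde N$. By the $2$-nilpotency of $C_G(Q)$, the quotient $C_G(Q)/K$ is a $2$-group, and $\widetilde C$ is in turn a quotient thereof, so $\widetilde C$ is a normal $2$-subgroup of $\widetilde N$ with $\widetilde N/\widetilde C\cong N_G(Q)/C_G(Q)\cong \mathfrak S_3$.

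Next, I would pick an involution $t'\in N_P(Q)\setminus Q$ and verify that its image $\widetilde{t'}$ in $\widetilde N$ is a genuine involution lying outside $\widetilde C$. The existence of $t'$ follows from the fact that $C_P(Q)=Q$ (every Klein-four subgroup of a dihedral $2$-group is of the form $\langle z,r\rangle$ for a non-central involution~$r$, and such a subgroup is self-centralising in $P$), combined with the normaliser condition, which forces $Q\lneq N_P(Q)$: since $N_P(Q)/Q\hookrightarrow\mathrm{Aut}(Q)\cong \mathfrak S_3$ is a $2$-subgroup, we conclude $N_P(Q)\cong D_8$, and this $D_8$ contains exactly two involutions outside the Klein-four subgroup~$Q$. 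Now for any $x=qk\in N_P(Q)\cap QK$ with $q\in Q$ and $k\in K$, the factors $q$ and $k$ commute (as $K\leq C_G(Q)$), so $x$ is a $2$-element if and only if $k=1$; this proves $N_P(Q)\cap QK=Q$, whence $t'\notin QK$ and $\widetilde{t'}$ has order~$2$. Moreover $N_P(Q)\cap C_G(Q)=C_{N_P(Q)}(Q)\leq C_P(Q)=Q$ shows $t'\notin C_G(Q)$, i.e.\ $\widetilde{t'}\notin \widetilde C$.

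Lemma~\ref{KeyLemma} applied to $\widetilde N$, $\widetilde C$ and $\widetilde{t'}$ then produces a subgroup $\widetilde H\leq \widetilde N$ with $\widetilde{t'}\in \widetilde H\cong \mathfrak S_3$. Define $H$ to be the full preimage of $\widetilde H$ in $N_G(Q)$, so that $|H|=6\cdot|QK|=24|K|$. The image of $N_P(Q)$ in $\widetilde N$ has order $|N_P(Q):N_P(Q)\cap QK|=8/4=2$ and is therefore generated by $\widetilde{t'}\in \widetilde H$; consequently $N_P(Q)\leq H$, and since $|H|_2=8=|N_P(Q)|$ we get $N_P(Q)\in\Syl_2(H)$. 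Finally,
$$|N_G(Q):H|=\frac{6\,|C_G(Q)|}{24\,|K|}=\frac{|C_G(Q)/K|}{4}$$
is a $2$-power (possibly $1$), because $C_G(Q)/K$ is a $2$-group containing $QK/K\cong Q$ of order~$4$. The crux of the argument is the choice of the quotient $N_G(Q)/QK$ rather than $N_G(Q)/K$: modding out by $Q$ as well ensures that the $\mathfrak S_3$-subgroup supplied by Lemma~\ref{KeyLemma} lifts to a subgroup of $N_G(Q)$ containing all of $N_P(Q)$ as a Sylow $2$-subgroup, not merely containing $Q$.
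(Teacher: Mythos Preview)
Your proof is correct and follows essentially the same approach as the paper: both pass to the quotient $N_G(Q)/(K\times Q)$, apply Lemma~\ref{KeyLemma} to an involution coming from $N_P(Q)\setminus Q$, and pull the resulting $\mathfrak S_3$ back to $H$. Your write-up is in fact more explicit in several places (e.g.\ verifying $N_P(Q)\cap QK=Q$, and the final index computation), while the paper cites \cite[Proposition~(1B)]{Bra74} for $N_P(Q)\cong D_8$ and leaves the remaining verifications to the reader.
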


\begin{proof}
Let  $K:=O_{2'}(C_G(Q))$ and let $R\in{\mathrm{Syl}}_2(C_G(Q))$. Then by assumption we  have the following inclusions of subgroups
 $$    \xymatrix@dr@C=1.5pc@R=1.5pc{
 N_G(Q) \ar@{-}[rrrr] \ar@{-}[dd]_{\mathfrak S_3}   & & & &  K\rtimes N_P(Q)=K\rtimes D_8 \ar@{-}[dd]^{C_2}\\
 & & & &\\
 C_G(Q)=K\rtimes R \ar@{-}[rrrr]  & & & &  K\times Q=K\rtimes (C_2\times C_2)
     } $$
where we note that  $|N_P(Q)/Q|=2$ by \cite[Proposition (1B)]{Bra74}, hence $K\rtimes N_P(Q)=K\rtimes D_8$.
Since $[K,\,Q]=1$ by the choice of $K$, 
we have $K\rtimes Q = K\times Q$. Furthermore, as $K$ is a  characteristic subgroup of $C_G(Q)$ and  $C_G(Q)\vartriangleleft N_G(Q)$, we have 
$K\vartriangleleft N_G(Q)$, so that 
$(K\times Q) \vartriangleleft N_G(Q)$.
Clearly $(K\times Q) \, \vartriangleleft \, (K\rtimes N_P(Q))$ 
since the index is two. Also $C_G(Q)\cap (K\rtimes N_P(Q))=K\times Q$.
Therefore we can take quotients by $L:=K\times Q$ of all the groups
in the picture. This yields the following inclusions of subgroups:
 $$    \xymatrix@dr@C=1.5pc@R=1.5pc{
 {\sf G}:=N_G(Q)/L \ar@{-}[rrrr] \ar@{-}[dd]_{\mathfrak S_3}   & & & &  {\sf R}:=(K\rtimes N_P(Q))/L \ar@{-}[dd]^{C_2}\\
 & & & &\\
 {\sf Q}:=C_G(Q)/L \ar@{-}[rrrr]  & & & &  \langle 1\rangle=(K\times Q)/L = {\sf Q}\cap{\sf R}
     } $$
In particular, we have that 
${\sf G}/{\sf Q}\cong \mathfrak S_3$, 
${\sf R}=(K\rtimes N_P(Q))/L\cong N_P(Q)/Q\cong C_2$, 
${\sf Q}=(K\rtimes R)/(K\times Q)\cong R/Q$
(which is a $2$-group), and 
${\sf Q}\cap{\sf R}=\langle 1\rangle$.
Now there must exist an involution ${\sf t}\in{\sf R}$ such that ${\sf t}\not\in{\sf Q}$.
Therefore, by  Lemma \ref{KeyLemma},  there exists
a  subgroup ${\sf H}$ of ${\sf G}$ 
such that ${\sf t}\in{\sf H}\cong  {\mathfrak S}_3$. 
Finally, we set $H$ to be the preimage of 
${\sf H}$ under the canonical homomorphism $N_G(Q)\twoheadrightarrow N_G(Q)/L$ and the 
claim follows.
\end{proof}

\noindent
We can now prove Theorem~~\ref{MainThm} of the Introduction:

\begin{proof}[Proof of Theorem~\ref{MainThm}]
Set $M:={\mathrm{Sc}}(G,P)$. 
Let $Q\leq P$ be an arbitrary fully normalised
subgroup in $\mathcal F_P(G)$. 
We claim that if $Q\neq 1$, then $N_G(Q)$ has a subgroup $H_Q$
which satisfies the conditions (1) and (2) in  Theorem~\ref{thm:IK17Thm1.4}.\par
First suppose that $Q\,{\not\cong}\,C_2\times C_2$. 
Then, ${\mathrm{Aut}}(Q)$ is a 
$2$-group (see 
 \cite[Lemma~5.4.1 (i)-(ii)]{Gor68}),
and hence $N_G(Q)/C_G(Q)$ is also a $2$-group. Thus $N_G(Q)$ is
$2$-nilpotent since $C_G(Q)$ is $2$-nilpotent by the assumption.
Set $N:=N_G(Q)$, and hence we can write $N:=K\rtimes P_N$
where $K:=O_{2'}(N)$ and $P_N$ is a Sylow $2$-subgroup of $N$.
Since $N_P(Q)$ is a $2$-subgroup of $N$, we can assume
$P_N\geq N_P(Q)$. Set $H_Q:=K\rtimes N_P(Q)$. Then, obviously
$N_P(Q)$ is a Sylow $2$-subgroup of $H_Q$ and
$|N:H_Q|$ is a power of $2$ (possibly one) since
$|N:H_Q|=|P_N:N_P(Q)|$. This means that $Q$ satisfies the
conditions (1) and (2) in Theorem~\ref{thm:IK17Thm1.4}.

Next suppose that $Q\cong C_2\times C_2$. Then,
$N_G(Q)/C_G(Q) \hookrightarrow{\mathrm{Aut}}(Q)\cong \mathfrak S_3$.
Clearly $C_P(Q)=Q$ and $N_P(Q)\cong D_8$
(see \cite[Proposition (1B)]{Bra74}). Then, as $N_P(Q)/C_P(Q)\hookrightarrow N_G(Q)/C_G(Q)$, we have that 
$|N_G(Q)/C_G(Q)|\in \{2,6\}$. 
If $|N_G(Q)/C_G(Q)|=2$, then $N_G(Q)$ is $2$-nilpotent
since $C_G(Q)$ is $2$-nilpotent, so that using an argument similar to the one in the previous case there exists a subgroup $H_Q$ of $N_G(Q)$
such that $H_Q$ satisfies the conditions (1) and (2) in Theorem~\ref{thm:IK17Thm1.4}.
Hence we can assume that $|N_G(Q)/C_G(Q)|\,{\not=}\,2$. Now assume that $|N_G(Q)/C_G(Q)|=6$, so that
$N_G(Q)/C_G(Q)\cong \mathfrak S_3$. 
Then it follows from 
Corollary~\ref{KeyCor} that
$N_G(Q)$ has a subgroup $H_Q$ such that
$N_P(Q)$ is a Sylow $2$-subgroup of $H_Q$ and that
$|N_G(Q) : H_Q|$ is a power of $2$, as required. Therefore, by Theorem~\ref{thm:IK17Thm1.4},  $\mathrm{Sc}(N_G(Q),N_P(Q))\!\downarrow^{N_G(Q)}_{QC_G(Q)}$ is indecomposable for each fully normalised
subgroup   $1\neq Q\leq P$.\par
Now if $Q=1$, then $\mathrm{Sc}(N_G(Q),N_P(Q))\!\downarrow^{N_G(Q)}_{QC_G(Q)}=\mathrm{Sc}(G,P)$, which is indecomposable by definition.\par
Therefore  Theorem \ref{thm:IK17Thm1.3} yields that $M$ is Brauer indecomposable. 
\end{proof}

\begin{Corollary}\label{GxG'}
Let  $G$ and $G'$ be finite groups with a common Sylow $2$-subgroup $P$ which is a dihedral group of order
at least $8$ and assume that $\mathcal F_P(G)=\mathcal F_P(G')$.  
Then the Scott module ${\mathrm{Sc}}(G\times G', \Delta P)$ is Brauer indecomposable.
\end{Corollary}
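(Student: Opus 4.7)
The plan is to apply Theorem~\ref{MainThm} to the group $G\times G'$ together with its dihedral $2$-subgroup $\Delta P$. Since $\Delta P\cong P$ is dihedral of order at least~$8$, the task then reduces to verifying the two remaining hypotheses of that theorem for the pair $(G\times G',\Delta P)$: the saturation of $\mathcal F_{\Delta P}(G\times G')$, and the $2$-nilpotency of $C_{G\times G'}(\Delta Q)$ for every $\mathcal F_{\Delta P}(G\times G')$-fully normalised non-trivial subgroup $\Delta Q\leq \Delta P$.

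First I would record the identification $\mathcal F_{\Delta P}(G\times G')\cong\mathcal F_P(G)$ already used in the proof of Lemma~\ref{ScottBrauer}: the two projections of $G\times G'$ onto its factors, combined with the hypothesis $\mathcal F_P(G)=\mathcal F_P(G')$, yield a natural isomorphism of fusion systems. Since $P$ is a Sylow $2$-subgroup of $G$, $\mathcal F_P(G)$ is saturated, and hence so is $\mathcal F_{\Delta P}(G\times G')$. Under this identification, a subgroup $\Delta Q$ is $\mathcal F_{\Delta P}(G\times G')$-fully normalised if and only if $Q$ is $\mathcal F_P(G)$-fully normalised, so it suffices to check the $2$-nilpotency condition for such~$Q$.

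Next, the direct computation $C_{G\times G'}(\Delta Q)=C_G(Q)\times C_{G'}(Q)$, together with the standard fact that a direct product of finite groups is $2$-nilpotent if and only if each factor is, reduces the problem to showing that both $C_G(Q)$ and $C_{G'}(Q)$ are $2$-nilpotent. For this I would invoke the Gorenstein--Walter classification recalled in \S\ref{sec:O2'trivial}: setting $\bar G:=G/O_{2'}(G)$, the quotient $\bar G$ is isomorphic either to $P$, to $\mathfrak A_7$, or to a subgroup of $\PgammaL_2(q)$ containing $\PSL_2(q)$ for a suitable odd prime power $q$. Since an extension of a $2$-nilpotent group by a normal $2'$-subgroup is again $2$-nilpotent, and subgroups of $2$-nilpotent groups are $2$-nilpotent, it is enough to prove that $C_{\bar G}(\bar Q)$ is $2$-nilpotent, where $\bar Q\cong Q$ denotes the isomorphic image of $Q$ in $\bar G$. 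Case (D1) is trivial; in cases (D2) and (D3), involution centralisers in $\bar G$ are dihedral (by Lemma~\ref{PGL}(b) in case (D3)) and thus $2$-nilpotent, elements of $P$ of order at least $4$ lie in a cyclic maximal torus whose centraliser is abelian, and centralisers of Klein-four subgroups of $P$ can be checked to be $2$-nilpotent by direct calculation. Theorem~\ref{MainThm} then yields the Brauer indecomposability of $\mathrm{Sc}(G\times G',\Delta P)$.

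The main obstacle in this outline is the case-by-case verification of $2$-nilpotency of centralisers in cases (D2) and (D3); while each individual computation is routine, exhausting all $\mathcal F_P(G)$-conjugacy classes of non-trivial subgroups of $P$ --- the central involution, non-central involutions of each fusion type, cyclic subgroups of order $\geq 4$, the Klein-four subgroups, and the larger dihedral subgroups --- requires some care. The remaining steps are either bookkeeping or immediate consequences of results already established earlier in the paper.
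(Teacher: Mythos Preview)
Your overall strategy---reduce to Theorem~\ref{MainThm} by checking saturation of $\mathcal F_{\Delta P}(G\times G')$ and $2$-nilpotency of $C_{G\times G'}(\Delta Q)$---is exactly the paper's. The saturation step and the reduction $C_{G\times G'}(\Delta Q)=C_G(Q)\times C_{G'}(Q)$ also match.

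Where you diverge is in the verification of $2$-nilpotency of $C_G(Q)$ and $C_{G'}(Q)$. You propose a case-by-case analysis via the Gorenstein--Walter classification, running over all types of non-trivial subgroups $Q\leq P$. The paper avoids this entirely by invoking Brauer's Lemma~(7A) of \cite{Bra66}: in \emph{any} finite group with dihedral Sylow $2$-subgroup, the centraliser of every involution is $2$-nilpotent. Given this, one simply observes that a non-trivial $Q\leq P$ contains some involution $t$, whence $C_G(Q)\leq C_G(t)$ is a subgroup of a $2$-nilpotent group and therefore $2$-nilpotent itself; likewise for $G'$. No classification, no case analysis, no passage to $G/O_{2'}(G)$.

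Your route would work in principle, but it is substantially longer and, as written, incomplete: Lemma~\ref{PGL}(b) treats only $\PGL_2(q)$, not the full case~(D3) which includes the extensions $\PSL_2(q)\rtimes C_f$ and $\PGL_2(q)\rtimes C_f$, so your dihedral-centraliser claim there needs additional justification. The paper's one-line appeal to \cite[Lemma~(7A)]{Bra66} is the key simplification you are missing.
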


\begin{proof}
Set $M:={\mathrm{Sc}}(G\times G', \Delta P)$. 
Since $P$ is a Sylow $2$-subgroup of $G$, $\mathcal F_P(G)$
is saturated by \cite[Proposition 1.3]{BLO03}. 
Therefore, as $\mathcal F_{\Delta P}(G\times G')\cong \mathcal F_P(G)$ 
by definition, we have that $\mathcal F_{\Delta P}(G\times G')$ is saturated. 

Now let  $\mathcal Q\leq\Delta P$ be any fully normalised subgroup 
in $\mathcal F_{\Delta P}(G\times G')$. Obviously we can write
$\mathcal Q=:\Delta Q$ for a subgroup $Q\leq P$.
We claim that the Brauer construction $M(\Delta Q)$
is indecomposable as a  $k[C_{G\times G'}(\Delta Q)]$-module. Notice that clearly $C_{G\times G'}(\Delta Q)=C_{G}(Q)\times C_{G'}(Q)$.\par

If $Q=1$, then the claim is obvious 
since $M(\Delta\langle 1\rangle)=M$. So, assume that $Q\,{\not=}\,1$. Hence $Q$ contains an involution $t$.
Thus, $C_G(Q) \leq C_{G}(t)$, so that \cite[Lemma (7A)]{Bra66}
implies that $C_{G}(Q)$ is $2$-nilpotent, and similarly for $C_{G'}(Q)$.
Hence $C_{G\times G'}(\Delta Q)$ is $2$-nilpotent.
Therefore it follows from Theorem~\ref{MainThm1} that
$M$ is Brauer indecomposable. 
\end{proof}

\begin{Lemma}\label{BrauerConstruction}
Assume that $G$ and $G'$ are finite groups with a common 
Sylow $2$-subgroup~$P$ which is a dihedral group of order at least $8$. Assume, moreover, that $\mathcal F_P(G)=\mathcal F_P(G')$ and  is such that there are exactly two $G$-conjugacy classes of involutions in $P$.  
Further suppose that $z\in Z(P)$ and $t\in P$ are two involutions in $P$  which  are not $G$-conjugate. 
Set $M:={\mathrm{Sc}}(G\times G',\,\Delta P)$. 
Then
$$
M(\Delta\langle t\rangle)
\cong {\mathrm{Sc}}(C_G(t)\times C_{G'}(t),\,\Delta C_P(t))$$
 as $k[C_G(t)\times C_{G'}(t)]$-modules. 
\end{Lemma}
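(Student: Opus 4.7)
The plan is to follow the strategy of the proof of Lemma~\ref{Rickard96}, replacing the central involution $z$ by $t$. The only step which is not automatic in the present setting is the verification that $\Delta\langle t\rangle$ is $\mathcal F_{\Delta P}(G\times G')$-fully normalised: this was immediate in Lemma~\ref{Rickard96} because $z$ was central, but here $t$ is not. This is the main obstacle, and it is handled using the two-class hypothesis as follows. Since $P$ contains exactly two $G$-conjugacy classes of involutions, namely $\{z\}$ and the class of $t$, every $\mathcal F_P(G)$-conjugate of $\langle t\rangle$ contained in $P$ is of the form $\langle t'\rangle$ with $t'$ a non-central involution of $P$. An elementary computation in the dihedral group $P$ (cf.\ Lemma~\ref{PGL}(b)(i)) shows $C_P(t')=\langle t',z\rangle\cong C_2\times C_2$, so $|N_P(\langle t'\rangle)|=|C_P(t')|=4$ for every such $t'$. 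Hence $\langle t\rangle$ is fully $\mathcal F_P(G)$-normalised and, via the canonical isomorphism $\mathcal F_{\Delta P}(G\times G')\cong\mathcal F_P(G)$, so is $\Delta\langle t\rangle$ in $\mathcal F_{\Delta P}(G\times G')$.

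With full normalisation established, I would apply \cite[Lemmas~2.2 and 3.1]{IK17} verbatim as in the proof of Lemma~\ref{Rickard96} to get
$$
{\mathrm{Sc}}\bigl(N_{G\times G'}(\Delta\langle t\rangle),\,N_{\Delta P}(\Delta\langle t\rangle)\bigr) \;\Big|\; M\!\downarrow_{N_{G\times G'}(\Delta\langle t\rangle)}.
$$
Since $|\langle t\rangle|=2$, normalising coincides with centralising: $N_{G\times G'}(\Delta\langle t\rangle)=C_G(t)\times C_{G'}(t)$ and $N_{\Delta P}(\Delta\langle t\rangle)=\Delta C_P(t)$. Setting $\mathfrak M:={\mathrm{Sc}}(C_G(t)\times C_{G'}(t),\,\Delta C_P(t))$, we thereby obtain $\mathfrak M\,\big|\,M\!\downarrow_{C_G(t)\times C_{G'}(t)}$.

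I would then pass to the Brauer construction at $\Delta\langle t\rangle$. Exactly as in the argument of Lemma~\ref{Rickard96}, the subgroup $\Delta\langle t\rangle$ acts trivially on the permutation module $k_{\Delta C_P(t)}\!\uparrow^{C_G(t)\times C_{G'}(t)}$: for any $(g,g')\in C_G(t)\times C_{G'}(t)$ the element $(g,g')^{-1}(t,t)(g,g')=(t,t)$ lies in $\Delta C_P(t)$, so the coset $\Delta C_P(t)\cdot(g,g')$ is fixed by right multiplication by $(t,t)$. Hence $\mathfrak M^{\Delta\langle t\rangle}=\mathfrak M$, and since in characteristic~$2$ the relative trace $\mathrm{Tr}^{\Delta\langle t\rangle}_1$ vanishes on $\mathfrak M$, we get $\mathfrak M(\Delta\langle t\rangle)=\mathfrak M$. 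Functoriality of the Brauer construction applied to the divisibility above then yields $\mathfrak M\,\big|\,M(\Delta\langle t\rangle)$.

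Finally, I would upgrade this divisibility to equality using Corollary~\ref{GxG'}, which asserts that $M$ is Brauer indecomposable. Applied to the $2$-subgroup $\Delta\langle t\rangle\leq G\times G'$, this means that $M(\Delta\langle t\rangle)\!\downarrow^{N_{G\times G'}(\Delta\langle t\rangle)}_{C_{G\times G'}(\Delta\langle t\rangle)}$ is indecomposable or zero. But $N_{G\times G'}(\Delta\langle t\rangle)=C_{G\times G'}(\Delta\langle t\rangle)=C_G(t)\times C_{G'}(t)$, and $\mathfrak M\neq 0$, so $M(\Delta\langle t\rangle)$ is itself an indecomposable $k[C_G(t)\times C_{G'}(t)]$-module, forcing $M(\Delta\langle t\rangle)\cong\mathfrak M$ as claimed.
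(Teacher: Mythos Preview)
Your proof is correct and follows essentially the same approach as the paper: establish that $\Delta\langle t\rangle$ is fully normalised (you compute $|C_P(t')|=4$ for every non-central involution directly, while the paper argues that all $\mathcal F_P(G)$-conjugates of $\langle t\rangle$ are already $P$-conjugate), then combine the results of \cite{IK17} with the Brauer indecomposability of $M$ from Corollary~\ref{GxG'}. The only organisational difference is that the paper invokes \cite[Theorem~3.1]{IK17} in one step to obtain $M(\Delta\langle t\rangle)\cong\mathrm{Sc}(N_{G\times G'}(\Delta\langle t\rangle),N_{\Delta P}(\Delta\langle t\rangle))$ directly from Brauer indecomposability and full normalisation, whereas you unpack this by first obtaining $\mathfrak M\mid M(\Delta\langle t\rangle)$ via \cite[Lemmas~2.2 and 3.1]{IK17} (as in Lemma~\ref{Rickard96}) and then upgrading to an isomorphism using indecomposability.
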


\begin{proof}
Set $\mathcal F:=\mathcal F_{\Delta P}(G\times G')$. 
By the beginning of the proof of Lemma \ref{ScottBrauer},
$\mathcal F$ is saturated. Set $Q:=\langle t\rangle$.

Next we claim that $Q$ is a fully $\mathcal F_P(G)$-normalised subgroup of $P$.
Let $R\leq P$ be $\mathcal F_P(G)$-conjugate to $Q$. 
So that $R$ and $Q$ are $G$-conjugate. 
So, we can write $R:=\langle r\rangle$ for an element $r\in R$ since $|Q|=2$.
Thus, $t$ and $r$ are $G$-conjugate, which implies that $r$ and $z$ are
not $G$-conjutate by the assumption, and hence $r$ and $t$ are $P$-conjugate
again by the assumption. Namely, $t=r^\pi$ for an element $\pi\in P$.
Obviously, $C_P(r)^\pi = C_{P^\pi}(r^\pi) = C_P(r^\pi)=C_P(t)$, so that
$|C_P(r)|=|C_P(t)|$, which yields that $|N_P(R)|=|N_P(Q)|$ since $|R|=|Q|=2$.
Therefore $Q$ is a fully $\mathcal F_P(G)$-noramlised subgroup of $P$.

Thus, $\Delta Q$ is a fully $\mathcal F$-normalised subgroup of $\Delta P$
since $\mathcal F\cong \mathcal F_P(G)=\mathcal F_P(G')$
(see the beginning of the proof of Lemma \ref{ScottBrauer}).
 Moreover, $M$ is Brauer indecomposable by Lemma~\ref{GxG'}.
Hence it follows from \cite[Theorem 3.1]{IK17} that
$$
M(\Delta Q) \cong {\mathrm{Sc}}(N_{G\times G'}(\Delta Q),\,N_{\Delta P}(\Delta Q))$$
as $k[N_{G\times G'}(\Delta Q)]$-modules.  Noting that $|\Delta Q|=2$ and $\Delta Q=\Delta\langle t\rangle$, we have that
$$
N_{G\times G'}(\Delta Q)=C_{G\times G'}(\Delta Q)
=C_G(Q)\times C_{G'}(Q) = C_G(t)\times C_{G'}(t)
$$
and that
$$
N_{\Delta P}(\Delta Q)=C_{\Delta P}(\Delta Q)= \Delta C_P(Q)=\Delta C_P(t).
$$
The assertion follows.
\end{proof}

\begin{Proposition}\label{stableEquiv}
Let  $G$ and $G'$ be finite groups with a common Sylow $2$-subgroup $P$ which is a dihedral group of order
at least $8$. Assume moreover that $O_{2'}(G)=O_{2'}(G')=1$ and  $\mathcal F_P(G)=\mathcal F_P(G')$.   
Then the Scott module ${\mathrm{Sc}}(G\times G', \Delta P)$   induces a stable equivalence of Morita type 
between the principal blocks $B_0(kG)$ and $B_0(kG')$.
\end{Proposition}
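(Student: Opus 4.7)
My plan is to apply the gluing result Lemma~\ref{orderP}, specifically the implication (iii)$\Rightarrow$(i), and verify for every cyclic subgroup $Q\leq P$ of order $2$ that $(M(\Delta Q),M(\Delta Q)^*)$ induces a Morita equivalence between $B_0(kC_G(Q))$ and $B_0(kC_{G'}(Q))$. By the classification of fusion systems on dihedral $2$-groups recalled in \S\ref{subsec:fusion}, together with the hypothesis $\mathcal F_P(G)=\mathcal F_P(G')$, it suffices to check one representative from each $G$-conjugacy class of involutions in $P$: the central involution $z\in Z(P)$, and, when the fusion is not that of $\PSL_2(q)$, non-central involutions $t\in P$ (and possibly $st$) not $G$-fused to $z$.

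For the central case $Q=\langle z\rangle$, I would first invoke Brauer's classical result \cite[Lemma~(7A)]{Bra66} to see that $C_G(z)$ and $C_{G'}(z)$ are both $2$-nilpotent, with $P$ as a common Sylow $2$-subgroup, since $C_P(z)=P$. Next, Lemma~\ref{ScottBrauer} provides that $\mathrm{Sc}(C_G(z)\times C_{G'}(z),\Delta P)$ is a direct summand of $M(\Delta\langle z\rangle)$. Since $\Delta\langle z\rangle\leq Z(\Delta P)$ forces $N_{G\times G'}(\Delta\langle z\rangle)=C_{G\times G'}(\Delta\langle z\rangle)=C_G(z)\times C_{G'}(z)$, the Brauer indecomposability of $M$ (Corollary~\ref{GxG'}) upgrades this divisibility to an isomorphism. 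Finally, Lemma~\ref{p-nilpotent} delivers the required Morita equivalence at the level of principal blocks of these two centralisers.

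For the non-central case, let $t\in P$ be an involution not $G$-fused to $z$. Then Lemma~\ref{BrauerConstruction} gives the identification $M(\Delta\langle t\rangle)\cong\mathrm{Sc}(C_G(t)\times C_{G'}(t),\Delta C_P(t))$. Here $C_P(t)\cong C_2\times C_2$, and Lemma~\ref{PGL}(b) (respectively a direct inspection in the trivial-fusion case, where $G=G'=P$ by \S\ref{sec:O2'trivial} and the Gorenstein--Walter classification) shows that $C_P(t)$ is a common Sylow $2$-subgroup of $C_G(t)$ and $C_{G'}(t)$. Both centralisers are again $2$-nilpotent by \cite[Lemma~(7A)]{Bra66}, so Lemma~\ref{p-nilpotent} once more yields the required Morita equivalence. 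The same argument handles $Q=\langle st\rangle$ when the fusion is of $\PGL$-type.

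The hard part will be the two identifications of Brauer quotients as Scott modules: upgrading the divisibility of Lemma~\ref{ScottBrauer} to an isomorphism in the central case, and identifying $M(\Delta\langle t\rangle)$ with the correct local Scott module in the non-central case. Both rest on the Brauer indecomposability of $M=\mathrm{Sc}(G\times G',\Delta P)$, which is precisely the content of Corollary~\ref{GxG'} (ultimately Theorem~\ref{MainThm1}). Once these identifications are in hand, Lemma~\ref{p-nilpotent} closes the argument immediately, and Lemma~\ref{orderP} then transports the local Morita equivalences to the desired global stable equivalence of Morita type between $B_0(kG)$ and $B_0(kG')$.
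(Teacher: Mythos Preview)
Your proposal is correct and follows essentially the same route as the paper: identify $M(\Delta Q)$ with a local Scott module via Brauer indecomposability (Corollary~\ref{GxG'} and Lemma~\ref{BrauerConstruction}), invoke Lemma~\ref{p-nilpotent} using $2$-nilpotency of centralisers, and glue with Lemma~\ref{orderP}. The only point where the paper is more explicit than you are is the reduction to conjugacy-class representatives: Lemma~\ref{orderP}(iii) demands the check for \emph{every} order-$2$ subgroup $Q\leq P$, and the paper supplies the missing conjugation step (using $\mathcal F_P(G)=\mathcal F_P(G')$ to choose $g\in G$ and $g'\in G'$ with $z^g=t=z^{g'}$, so that $M(\Delta\langle t\rangle)=M(\Delta\langle z\rangle)^{(g,g')}$ is again a Scott module of the required shape), which you should make explicit rather than assert.
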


\begin{proof}
Fix $P=D_{2^n}$ for an $n\geq 3$, and set $M:={\mathrm{Sc}}(G\times G', \Delta P)$, $B:=B_0(kG)$ and $B':=B_0(kG')$.  Since we assume that $O_{2'}(G)=O_{2'}(G')=1$, $G$ and $G'$ are amongst the groups (D1)-(D3) listed in \S\ref{sec:2.4}. 
First we note that $G=P$ is the unique group in this list with $\cF_P(G)=\cF_P(P)$, therefore we may assume that $G\neq P\neq G'$.
Thus, by~\S\ref{subsec:fusion} and by the assumption that $\cF_P(G)=\cF_P(G')$, we have that $P$ has the same number of $G$-conjugacy and $G'$-conjugacy classes of involutions, namely either one or two.

Let $t\in P$ be an arbitrary involution, and set
$B_t:=B_0(kC_G(t))$ and $B_t':=B_0(kC_{G'}(t))$. We claim that $M(\Delta\langle t\rangle)$ induces a Morita equivalence
between $B_t$ and $B'_t$.

First of all, assume that $t\in Z(P)$. Set $z:=t$.
Thus $C_P(z)=P$ and this is a Sylow $2$-subgroup of $C_G(z)$, 
so that $P$ is a Sylow $2$-subgroup of $C_{G'}(z)$ as well.  Recall that $C_{G}(t)$ and $C_{G'}(t)$ are both $2$-nilpotent
by \cite[Lemma (7A)]{Bra66}.
Set 
$$M_z:={\mathrm{Sc}}(C_G(z)\times C_{G'}(z), \, \Delta P).
$$
By Lemma~\ref{p-nilpotent},
$M_z$
induces a Morita equivalence between
$B_z$ and $B'_z$. Hence we have
$M_z\,|\, M(\Delta\langle z\rangle)$
by Lemma~\ref{ScottBrauer} and  Corollary~\ref{GxG'} yields
\begin{equation*}
M(\Delta\langle z\rangle)=M_z.
\end{equation*}
This means that $M(\Delta\langle z\rangle)$ induces a Morita
equivalence between $B_z$ and \smallskip $B'_z$. \\

Case 1: Assume first that all involutions in $P$ are $G$-conjugate, so that all involutions in $P$ are $G'$-conjugate a well, since $\mathcal F_P(G)=\mathcal F_P(G')$. 
Therefore there exists an element $g\in G$ and an element $g'\in G'$  such that $z^g=t= z^{g'}$.
Thus, by definition of the Brauer construction, we have
\begin{equation*}
M(\Delta\langle t\rangle)=M(\Delta\langle z^g\rangle)
=M((\Delta\langle z\rangle)^{(g,g')})
=M(\Delta\langle z\rangle)^{(g,g')}=(M_z)^{(g,g')}.
\end{equation*}
Moreover, we have 
\begin{align*}
(M_z)^{(g,g')}
&=
\left({\mathrm{Sc}}(C_G(z)\times C_{G'}(z), \, \Delta P)\right)^{(g,g')}
\\
&={\mathrm{Sc}}(C_G(z^g)\times C_{G'}(z^{g'}), \, (\Delta P)^{(g,g')})\\
&= {\mathrm{Sc}}(C_G(t)\times C_{G'}(t), \, \Delta)\,,
\end{align*}
where $\Delta :=\{(\pi^g,\,\pi^{g'})\mid \pi\in P\}\cong P$.
Obviously $P^g$ is a Sylow $2$-subgroup of $C_G(t)$ and $P^{g'}$ is a Sylow
$2$-subgroup of $C_{G'}(t)$.
Further $C_G(t)$ and $C_{G'}(t)$ are $2$-nilpotent. 
Therefore it follows from Lemma~\ref{p-nilpotent} that
$${\mathrm{Sc}}(C_G(t)\times C_{G'}(t),\, \Delta)=M(\Delta\langle t\rangle)$$ 
induces a Morita equivalence between $B_t$ and \smallskip $B'_t$.
\par
Case 2: Assume now  that $P$ has exactly two  $G$-conjugacy classes and hence exactly two  $G'$-conjugacy classes of involutions. 
Then, by \S\ref{subsec:fusion}, $G$ and $G'$ are groups of type (D3)\rm(ii), that is $G\cong\PGL_2(q)\rtimes C_f$ and $G'\cong\PGL_2(q')\rtimes C_{f'}$ for some odd prime powers $q,q'$ and some suitable odd positive integers $f,f'$. 

If $t$ is $G$-conjugate to the central element $z\in Z(P)$, then 
$M(\Delta\langle t\rangle)$ induces a Morita equivalence
between $B_t$ and $B'_t$ by the same argument as in Case~1. Hence we may assume that $t$ is not $G$-conjugate to $z$. We note that by Lemma~\ref{PGL}(a) any two involutions in $P$ which are not $G$-conjugate (resp. $G'$-conjugate) to $z$ are already $P$-conjugate. It follows easily from Lemma~\ref{PGL}(b) that $C_P(t)$ is a Sylow $2$-subgroup of both $C_G(t)$ and $C_G(t')$. Again, because $C_G(t)$ and $C_{G'}(t)$ are both $2$-nilpotent, it follows  from Lemma \ref{p-nilpotent} that
$$M_t:={\mathrm{Sc}}(C_G(t)\times C_{G'}(t),\,\Delta C_P(t))$$
induces a stable equivalence of Morita type between $B$ and $B'$.

On the other hand, Lemma~\ref{BrauerConstruction}  
 implies that 
$$M(\Delta\langle t\rangle)=M_t\,.$$
Therefore 
$M(\Delta\langle t\rangle)$ induces a Morita equivalence between $B_t$ and $B'_t$. Hence the claim holds. \par
Finally Lemma \ref{orderP} yields that $M$ induces a stable equivalence of Morita type
between $B$ and $B'$.
\end{proof}

\begin{Corollary}\label{cor:steqD2n}
Let $G$ and $G'$ be two finite groups with a common Sylow $2$-subgroup $P\cong D_{2^n}$ with $n\geq 3$ and let $M:={\mathrm{Sc}}(G\times G', \Delta P)$.
\begin{enumerate}
\item[\rm(a)] If $G=\PSL_2(q)$ and $G'=\PSL_2(q')$, where $q$ and $q'$ are powers of odd primes  such that $q\equiv q'\pmod{8}$ and $|G|_2=|G'|_2\geq 8$, then $M$ induces a stable equivalence of Morita type between  $B_0(kG)$ and $B_0(kG')$.
\item[\rm(b)] If $n=3$, $G=\fA_7$ and $G'=\PSL_2(q')$, where $q'$ is a power of an odd prime such that $|G'|_2= 8$, then $M$ induces a stable equivalence of Morita type between  $B_0(kG)$ and $B_0(kG')$.
\item[\rm(c)] If $G=\PGL_2(q)$ and $G'=\PGL_2(q')$, where $q$ and $q'$ are powers of odd primes  such that $q\equiv q'\pmod{4}$ and $|G|_2=|G'|_2$, then $M$ induces a stable equivalence of Morita type between  $B_0(kG)$ and $B_0(kG')$.
\end{enumerate}
Furthermore, there exists a stable equivalence of Morita type between $B_0(kG)$ and $B_0(kG')$ if and only if $\cF_P(G)=\mathcal F_P(G')$.
\end{Corollary}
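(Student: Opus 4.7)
For parts (a), (b), and (c) my plan is to verify the hypotheses of Proposition~\ref{stableEquiv} and to invoke it directly. In each case $G$ and $G'$ share a common Sylow $2$-subgroup $P \cong D_{2^n}$ thanks to $|G|_2 = |G'|_2$, and $O_{2'}(G) = O_{2'}(G') = 1$ holds because $\PSL_2(q)$, $\PGL_2(q)$ and $\fA_7$ are (almost) simple with no non-trivial normal subgroup of odd order. For the equality $\cF_P(G) = \cF_P(G')$ I would invoke the trichotomy of \S\ref{subsec:fusion}: in parts (a) and (b) the principal block satisfies $l(B_0) = 3$, placing the fusion in Case (3), while in part (c) it satisfies $l(B_0) = 2$, placing it in Case (2). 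The numerical conditions $q \equiv q' \pmod{8}$ in (a), combined with $|G|_2 = |G'|_2 \geq 8$, force $q$ and $q'$ to both be $\equiv 1$ or both $\equiv -1 \pmod{8}$, so the two groups refine Case (3) identically; analogously, $q \equiv q' \pmod{4}$ in (c) yields agreement within Case (2). Since a fixed dihedral $P$ admits a unique saturated fusion system of each type, we obtain $\cF_P(G) = \cF_P(G')$ as fusion systems on the common $P$, and Proposition~\ref{stableEquiv} concludes.

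For the $(\Leftarrow)$ direction of the final statement, assuming $\cF_P(G) = \cF_P(G')$, my plan is to reduce to the case $O_{2'}(G) = O_{2'}(G') = 1$ handled by Proposition~\ref{stableEquiv}. By \S\ref{sec:O2'trivial} we have splendid Morita equivalences $B_0(kG) \sim_{SM} B_0(k[G/O_{2'}(G)])$ and $B_0(kG') \sim_{SM} B_0(k[G'/O_{2'}(G')])$; since $O_{2'}(G)$ is a $p'$-group, $P$ projects isomorphically onto a Sylow $2$-subgroup of $G/O_{2'}(G)$ and the fusion system on $P$ is unchanged, and similarly for $G'$. Proposition~\ref{stableEquiv} then provides a Scott-module stable equivalence of Morita type between $B_0(k[G/O_{2'}(G)])$ and $B_0(k[G'/O_{2'}(G')])$, which composed with the two splendid Morita equivalences yields the required stable equivalence between $B_0(kG)$ and $B_0(kG')$.

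For the $(\Rightarrow)$ direction my plan is to show that any stable equivalence of Morita type between the two principal blocks forces $l(B_0(kG)) = l(B_0(kG'))$, and then to conclude using the trichotomy of \S\ref{subsec:fusion}. Since $|P| > 1$, neither $B_0(kG)$ nor $B_0(kG')$ has a projective simple module, and Theorem~\ref{thm:LinckMoritaEq}(a) guarantees that an indecomposable bimodule inducing the stable equivalence sends simples to non-projective indecomposables on both sides. Standard preservation results for stable equivalences of Morita type between symmetric $k$-algebras (for instance via the elementary divisors of the Cartan matrix, whose size is $l(B) \times l(B)$) then force $l(B_0(kG)) = l(B_0(kG'))$. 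Once this equality is in hand, the trichotomy of \S\ref{subsec:fusion} identifies the fusion system on $P$ uniquely from $l(B_0)$, giving $\cF_P(G) = \cF_P(G')$.

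The main obstacle lies in the $(\Rightarrow)$ direction: making the preservation of $l(B_0)$ rigorous under an \emph{arbitrary} stable equivalence of Morita type, rather than one arising from our specific Scott-module construction. The other steps reduce to direct verifications against the preparatory lemmas, whereas here one must either invoke external invariants of stable equivalences (Cartan invariants, Külshammer ideals, or similar) or exploit a feature specific to dihedral defect groups, where Erdmann's classification of tame blocks makes the three possible principal-block algebras transparent enough to distinguish them up to stable equivalence of Morita type.
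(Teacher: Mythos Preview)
Your treatment of parts (a), (b), (c) and of the $(\Leftarrow)$ direction matches the paper's proof exactly: invoke Proposition~\ref{stableEquiv} after checking $\cF_P(G)=\cF_P(G')$, and for general $G$, $G'$ first pass to $G/O_{2'}(G)$ and $G'/O_{2'}(G')$ via the splendid Morita equivalences of \S\ref{sec:O2'trivial}.

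For the $(\Rightarrow)$ direction, your idea (deduce $l(B_0(kG))=l(B_0(kG'))$ and then read off the fusion from \S\ref{subsec:fusion}) is correct, but the paper's execution is cleaner than what you sketch. Rather than appealing to Cartan invariants or K\"ulshammer ideals, the paper uses Brou\'e's result \cite[5.3.Proposition]{Bro94} that a stable equivalence of Morita type preserves the quantity $k(B)-l(B)$. Combined with Brauer's theorem \cite[Theorem~(7B)]{Bra66} that $k(B_0(kG))=2^{n-2}+3$ depends only on $|P|$, this immediately gives $l(B_0(kG))=l(B_0(kG'))$, hence $\cF_P(G)=\cF_P(G')$ by \S\ref{subsec:fusion}. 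Your route via elementary divisors of the Cartan matrix can also be made to work (their multiset is known to be preserved), but it requires citing an external result of comparable depth, whereas the $k(B)-l(B)$ argument uses ingredients already in play in the paper.
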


\begin{proof}
Parts (a), (b) and (c)  follow directly from Proposition~\ref{stableEquiv} since in each case 
$\cF_P(G)=\mathcal F_P(G')$. The sufficient condition of the last statement also follows from Proposition~\ref{stableEquiv}
since we have already noticed that inflation induces splendid Morita equivalences (hence stable equivalences of Morita type) between 
$B_0(kG)$ and $B_0(kG/O_{2'}(G))$, resp. between $B_0(kG')$ and $B_0(kG/O_{2'}(G'))$. 
To prove the necessary condition, we recall that the  existence of a stable equivalence of Morita type  between $B_0(kG)$ and 
$B_0(kG')$ implies that 
$$k(B_0(G))-l(B_0(G))=k(B_0(G'))-l(B_0(G'))$$
(see \cite[5.3.Proposition]{Bro94}). 
Using \cite[Theorem (7B)]{Bra66}, we have that $k(B_0(G))=k(B_0(G'))=2^{n-2}+3$, so that we must have 
$l(B_0(G))=l(B_0(G'))$, which in turn forces $\cF_P(G)=\mathcal F_P(G')$; see \S\ref{subsec:fusion}.
\end{proof}



\section{The principal blocks of  $\PSL_2(q)$ and $\PGL_2(q)$}\label{sec:pslpgl}

Throughout this section we assume that $k$ is a field of characteristic $2$. 
We now start to determine when the stable equivalences of Morita type we constructed  in the previous section are actually Morita equivalences, and in consequence splendid Morita equivalences.
We note that these Morita equivalences are known from the work of Erdmann \cite{Erd90} (over $k$) or Plesken \cite[VII]{Ple83} (over complete discrete valuation rings), but the methods used do not prove that they are \emph{splendid} Morita equivalences.\\

We start with the case $\PSL_2(q)$ and we fix the following notation:  we set $\mathbb B(q):=C/Z(\SL_2(q))$, where $C\leq\SL_2(q)$ is the subgroup of upper-triangular matrices. We have $|\PSL_2(q)|=\frac{1}{2}q(q-1)(q+1)$ and $|\mathbb B(q)|=\frac{1}{2}q(q-1)$. Furthermore, the principal $2$-block of $\PSL_2(q)$ contains three simple modules, namely the trivial module and two mutually dual modules of $k$-dimension $(q-1)/2$, which we denote by $S(q)$ and $S(q)^{*}$. (See e.g. \cite[Lemma 4.3 and Corollary 5.2]{Erd77}). Throughout, we heavily rely heavily on Erdmann's computation of the PIMs of $\PSL_2(q)$ in \cite{Erd77}.

\begin{Lemma}\label{ScottPSL}
Let $G:=\PSL_2(q)$, where $q$ is a power of an odd prime, and  let $P$ be a Sylow $2$-subgroup of $G$.  
Then  the Loewy and socle series structure of the Scott module 
with respect to $\mathbb B(q)$ is 
$${\mathrm{Sc}}(G,\,\mathbb B(q))=  
\boxed{
\begin{matrix} k_G\\ S(q) \oplus  S(q)^* \\ k_G
      \end{matrix}    }\,.
$$
\end{Lemma}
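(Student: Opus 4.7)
The plan is to show that $V := k_{\mathbb{B}(q)}\uparrow^G$ is itself indecomposable, hence equal to ${\mathrm{Sc}}(G, \mathbb{B}(q))$, and then to pin down its Loewy structure by transporting the (simpler) one of the analogous module for $\widetilde{G} := \PGL_2(q)$ down to $G$. For the first part: since $G$ acts $2$-transitively on $G/\mathbb{B}(q)\cong \mathbb{P}^1(\mathbb{F}_q)$, Mackey's formula gives $\dim_k{\mathrm{End}}_{kG}(V) = |\mathbb{B}(q)\backslash G/\mathbb{B}(q)| = 2$, with basis $\{1, T\}$, and the Iwahori--Hecke relation $T^2 = q\cdot 1 + (q-1)T$ specialises in characteristic~$2$ (where $q\equiv 1$ and $q-1\equiv 0$) to $(T-1)^2 = 0$. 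So ${\mathrm{End}}_{kG}(V)\cong k[\epsilon]/(\epsilon^2)$ is local and $V$ is indecomposable; combined with the augmentation $V\twoheadrightarrow k_G$ this identifies $V$ with ${\mathrm{Sc}}(G,\mathbb{B}(q))$, of dimension $q+1$, with simple head $k_G$, and (by self-duality of permutation modules) also simple socle $k_G$.

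\textbf{Step 2 -- The $\widetilde G$-analogue and its restriction.} Let $\widetilde{B}\le \widetilde{G}$ be the Borel of $\widetilde G$; then $\widetilde{B}\cap G = \mathbb{B}(q)$ and $G\cdot \widetilde{B} = \widetilde{G}$, so setting $\widetilde{V} := k_{\widetilde{B}}\uparrow^{\widetilde{G}}$, Mackey yields $\widetilde{V}\downarrow_G\cong V$. The same Hecke-algebra argument applied over $\widetilde{G}$ shows $\widetilde V$ is an indecomposable $k\widetilde G$-module of dimension $q+1$ with simple head and socle $k_{\widetilde{G}}$. Since $B_0(k\widetilde{G})$ contains exactly two simple modules by Case~(2) of \S\ref{subsec:fusion}, namely $k_{\widetilde{G}}$ and some $\widetilde{T}$ of dimension $q-1$, a Brauer-character computation (evaluating at a unipotent element, which fixes exactly one point of $\mathbb P^1(\mathbb F_q)$) forces the composition factors of $\widetilde V$ to be $k_{\widetilde G}, \widetilde T, k_{\widetilde G}$, and hence the Loewy series
$$\widetilde{V} \;=\; \begin{matrix} k_{\widetilde{G}} \\ \widetilde{T} \\ k_{\widetilde{G}} \end{matrix}.$$
By Clifford theory -- using the dimensions of simples in $B_0(kG)$ (namely $1,(q-1)/2,(q-1)/2$, which preclude a simple restriction of $\widetilde T$) together with $l(B_0(k\widetilde{G})) = 2$ (which precludes a $G$-invariant constituent, as this would produce two non-isomorphic extensions of it to $\widetilde G$ lying in $B_0(k\widetilde G)$) -- one obtains $\widetilde{T}\downarrow_G\cong S(q)\oplus S(q)^*$. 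Restricting the Loewy filtration of $\widetilde{V}$ produces a filtration $0\subset L_3\subset L_2\subset V$ with semisimple quotients $k_G$, $S(q)\oplus S(q)^*$, $k_G$.

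\textbf{Step 3 -- Conclusion and main obstacle.} By Step~1, $V$ has unique simple quotient and unique simple submodule, both isomorphic to $k_G$, so necessarily $L_2 = {\mathrm{rad}}(V)$ and $L_3 = {\mathrm{soc}}(V)$; consequently the middle Loewy layer is $L_2/L_3 = {\mathrm{rad}}(V)/{\mathrm{soc}}(V) \cong S(q)\oplus S(q)^*$, which yields the claimed Loewy and socle series. The hardest part is establishing semisimplicity of this middle layer: self-duality of $V$ alone is compatible with a non-split length-$4$ structure $k_G\mid S(q)\mid S(q)^*\mid k_G$ (which is itself self-dual), but lifting to $\widetilde{G}$ -- whose principal block has only two simples -- makes the semisimple middle layer automatic and sidesteps a direct computation of ${\mathrm{Ext}}^1_{kG}(S(q),S(q)^*)$.
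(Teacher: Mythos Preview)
Your approach is correct in its architecture and genuinely different from the paper's, but there is one local error in the Clifford step. You write that a $\widetilde G$-invariant constituent of $\widetilde T{\downarrow}_G$ would yield \emph{two} non-isomorphic extensions to $\widetilde G$. This is false in characteristic~$2$ with $|\widetilde G:G|=2$: the extensions of an invariant simple $S$ are a torsor under $\Hom(\widetilde G/G,k^\times)=\Hom(C_2,k^\times)=1$, so there is at most one. The conclusion survives, however, since $H^2(C_2,k^\times)=k^\times/(k^\times)^2=1$ (Frobenius is bijective on $k$), so an invariant $S$ would \emph{always} extend, producing a simple in $B_0(k\widetilde G)$ of dimension $(q-1)/2\notin\{1,q-1\}$, contradicting $l(B_0(k\widetilde G))=2$. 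A cleaner route: $\widetilde T$ is self-dual (it is the unique non-trivial simple in its block), hence so is $\widetilde T{\downarrow}_G$, which forces the decomposition $S(q)\oplus S(q)^*$ directly. With this fix, Steps~1--3 go through as written.

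As for the comparison: the paper splits on $q\bmod 4$. For $q\equiv -1\pmod 4$ it observes $2\nmid|\mathbb B(q)|$, so $\mathrm{Sc}(G,\mathbb B(q))=P(k_G)$, and quotes Erdmann for the Loewy structure of this PIM. For $q\equiv 1\pmod 4$ it shows indecomposability via the ordinary character $1_G+\mathrm{St}_G$, lists five candidate Loewy/socle shapes compatible with the composition factors and self-duality, and eliminates four of them using the vanishing $\Ext^1_{kG}(S(q),S(q)^*)=0$ from \cite{Erd77} together with an ad hoc submodule-counting argument. Your proof is uniform in $q$: the Hecke-algebra computation $(T-1)^2=0$ gives indecomposability in one stroke, and passing to $\widetilde G=\PGL_2(q)$ (where the block has only two simples, making the three-layer structure of $\widetilde V=k_{\widetilde B}{\uparrow}^{\widetilde G}$ automatic) then restricting avoids both the case split and any appeal to $\Ext$-computations. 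The price you pay is the Clifford bookkeeping for $\widetilde T{\downarrow}_G$; the paper instead pays with the citation to Erdmann's $\Ext$-tables and a longer elimination argument. Note also that your $\widetilde V$ is the induction from the full Borel $\widetilde B$ of $\widetilde G$, which is a different (and smaller) module than the paper's $\mathrm{Sc}(\widetilde G,\mathbb B(q))$ treated in Lemma~\ref{ScottPGL}, so there is no circularity.
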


\noindent Before proceeding with the proof, we note that in this lemma we allow the Sylow $2$-subgroups to be Klein-four groups as this case will be necessary when dealing with the groups of type $\PGL_2(q)$ and dihedral Sylow $2$-subgroups of order~$8$.

\begin{proof}
Assume first that $q\equiv -1\pmod{4}$. Then $2\nmid | \mathbb B(q)|$, so that 
${\mathrm{Sc}}(G, \mathbb B(q))=P(k_G)$ (see \cite [Corollary 4.8.5]{NT88}). Therefore, by \cite[Theorem~4(a)]{Erd77}, the Loewy and socle structure of the  PIM $P(k_G)={\mathrm{Sc}}(G, \mathbb B(q))$ is as claimed. \par

Assume next that $q\equiv 1\pmod{4}$. 
By \cite[\S3.2.3]{Bon11} the trivial source module 
${k_{\mathbb B(q)}}{\uparrow}^G$ affords the ordinary character
\begin{equation}\label{Borel}
1_{\mathbb B(q)}{\uparrow}^G=1_G+\text{St}_G\,,
\end{equation}
where $\text{St}_G$ denotes the Steinberg character. Therefore ${k_{\mathbb B(q)}}{\uparrow}^G$ is indecomposable and isomorphic to ${\mathrm{Sc}}(G,\,\mathbb B(q))=:X$. 
Then it follows from \cite[Table 9.1]{Bon11} that
$$X=k_G+(k_G+S(q)+S(q)^*)$$ as composition factors.  Since $X$ is an indecomposable
self-dual $2$-permutation $kG$-module, its Loewy and socle structure
is one of:
$$
\boxed{\begin{matrix} k_G\\S(q)\oplus S(q)^*\\k_G\end{matrix}}\,, \qquad
\boxed{\begin{matrix} k_G\\\,S(q)\, \ \\S(q)^*\\k_G\end{matrix}}\,, \qquad
\boxed{\begin{matrix} k_G\\S(q)^*\\ \,S(q)\, \ \\k_G\end{matrix}}\,, \qquad
\boxed{\begin{matrix} k_G\oplus \,S(q)\, \  \\k_G\oplus S(q)^* \end{matrix}}
\text{\qquad or \qquad}
\boxed{\begin{matrix} k_G\oplus S(q)^* \\k_G\oplus \,S(q)\, \  \end{matrix}}\,.
$$
By \cite[Theorem 2(a)]{Erd77}, 
${\mathrm{Ext}}_{kG}^1(S(q), S(q)^*)=0={\mathrm{Ext}}_{kG}^1(S(q)^*,S(q))$,  hence the second and the third cases
cannot occur. \par
Suppose now that the fourth case happens. Then $X$ has a submodule
$Y$ such that $X/Y\cong k_G$. Hence $Y=S(q)+S(q)^*+k_G$ as composition factors.
Then, since \newline ${\mathrm{Ext}}_{kG}^1(S(q),S(q)^*)=0$, $Y$ has the following structure: 
$$Y= \boxed{\begin{matrix} S(q)\\k_G\end{matrix}}\,\oplus S(q)^*$$
Hence $Y$ has a submodule $Z$ with the Loewy and socle structure 
$$Z=\boxed{\begin{matrix} S(q)\\k_G\end{matrix}}\,.$$
Similarly $X$ has a submodule $W$ such that $X/W\cong S(q)$, and
$W=k_G+k_G+S(q)^*$ as composition factors.
By \cite[Theorem 2]{Erd77}, ${\mathrm{Ext}}_{kG}^1(k_G, k_G)=0$.
Therefore $W$ has the following structure:
$$W=\boxed{\begin{matrix} k_G\\ S(q)^*\end{matrix}}\oplus k_G\,,$$
and hence $W$ has a submodule  $U$ with structure
$$U=\boxed{\begin{matrix} k_G\\ S(q)^*\end{matrix}}\,.$$
Since $Z$ and $U$ are submodules of $X$ and $Z\cap U=0$,
we have a direct sum $Z\oplus U$ in $X$. As a consequence $X=Z\oplus U$, which is 
a contradiction since $X$ is indecomposable. Hence the fourth case cannot occur.\par 
Similarly, the fifth case cannot happen. Therefore we must  have that 
$$
{\mathrm{Sc}}(G,\,\mathbb B(q))\ = \ 
\boxed{
\begin{matrix} k_G\\S(q) \oplus  S(q)^* \\ k_G
      \end{matrix}    }
$$
as desired.   
\end{proof}

\begin{Proposition}\label{PSL_2}
Let $G:=\PSL_2(q)$ and $G':=\PSL_2(q')$, where $q$ and $q'$ are powers of odd primes  such that 
$q\equiv q'\pmod{4}$ and $|G|_2=|G'|_2\geq 8$. Let $P$ be a common 
Sylow $2$-subgroup of $G$ and $G'$. 
Then the Scott module ${\mathrm{Sc}}(G\times G',\,\Delta P)$ induces a splendid Morita equivalence 
between $B_0(kG)$and $B_0(kG')$.
\end{Proposition}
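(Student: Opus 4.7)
My plan is to apply Proposition~\ref{stableEquiv} to get a stable equivalence of Morita type and then promote it to a Morita equivalence via Linckelmann's criterion, Theorem~\ref{thm:LinckMoritaEq}(b). The hypotheses $|G|_2=|G'|_2\ge 8$ and $q\equiv q'\pmod 4$ force both $\mathcal F_P(G)$ and $\mathcal F_P(G')$ to be of $\mathrm{PSL}$-type in Case~(3) of \S\ref{subsec:fusion}, so in particular they agree, and Proposition~\ref{stableEquiv} then tells us that $M:=\mathrm{Sc}(G\times G',\Delta P)$ induces a stable equivalence of Morita type between $B:=B_0(kG)$ and $B':=B_0(kG')$ via the functor $F:=-\otimes_B M$. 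Since $M$ is a Scott module and hence a $p$-permutation bimodule, any Morita equivalence induced by $M$ will be splendid automatically, so the whole task reduces to showing that $F$ maps each of the three simples $k_G$, $S(q)$, $S(q)^*$ of $B$ to a simple $B'$-module. Lemma~\ref{Sc-stabEq}(a) already handles the trivial simple: $F(k_G)=k_{G'}$.

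For the two non-trivial simples, I would work with $Q_0$, a Sylow $2$-subgroup of $\mathbb B(q)$ (equivalently of $\mathbb B(q')$), which is an index-$2$ subgroup of~$P$; then $\mathrm{Sc}(G,\mathbb B(q))=\mathrm{Sc}(G,Q_0)$, and Lemma~\ref{ScottPSL} gives its Loewy and socle structure, with $k_G$ as top and socle and $S(q)\oplus S(q)^*$ in the middle (likewise for $G'$). In particular, the radical $\Omega_{Q_0}(k_G)$ sits in the socle short exact sequence
\[
0\longrightarrow k_G\longrightarrow \Omega_{Q_0}(k_G)\longrightarrow S(q)\oplus S(q)^*\longrightarrow 0.
\]
Applying the exact functor $F$, and using Lemmas~\ref{Sc-stabEq}(a) and~(d), this becomes
\[
0\longrightarrow k_{G'}\longrightarrow \Omega_{Q_0}(k_{G'})\oplus \mathcal P\longrightarrow F(S(q))\oplus F(S(q)^*)\longrightarrow 0 \qquad (\ast)
\]
for some projective $B'$-module $\mathcal P$.

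The main obstacle is to control $\mathcal P$. By the bimodule adjunction for $F$ together with Lemma~\ref{Sc-stabEq}(a),
\[
\mathrm{Hom}_{B'}\!\big(F(\Omega_{Q_0}(k_G)),\,k_{G'}\big)\;\cong\;\mathrm{Hom}_B\!\big(\Omega_{Q_0}(k_G),\,k_G\big)\;=\;0,
\]
the last equality because the top of $\Omega_{Q_0}(k_G)$ is $S(q)\oplus S(q)^*$. Hence $\mathrm{Hom}_{B'}(\mathcal P,k_{G'})=0$, ruling out $P(k_{G'})$ as a summand of $\mathcal P$. Since $B'$ is a symmetric algebra, $\mathrm{soc}(P(X))\cong X$ for every simple $X$, and so the socle of $\mathcal P$ contains no trivial constituent. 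Consequently the embedding in $(\ast)$ must factor through $\mathrm{soc}(\Omega_{Q_0}(k_{G'}))=k_{G'}$, and taking the quotient yields
\[
F(S(q))\oplus F(S(q)^*)\;\cong\;S(q')\oplus S(q')^*\oplus \mathcal P.
\]

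Finally, Theorem~\ref{thm:LinckMoritaEq}(a) ensures that $F(S(q))$ and $F(S(q)^*)$ are indecomposable and non-projective, so Krull--Schmidt forces $\mathcal P=0$ and $\{F(S(q)),F(S(q)^*)\}=\{S(q'),S(q')^*\}$. Therefore $F$ sends simples to simples, and Theorem~\ref{thm:LinckMoritaEq}(b) upgrades the stable equivalence to a Morita equivalence induced by $M$, which, as noted, is automatically splendid.
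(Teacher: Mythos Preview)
Your proof is correct and follows essentially the same strategy as the paper: obtain the stable equivalence from Proposition~\ref{stableEquiv}/Corollary~\ref{cor:steqD2n}(a), use Lemma~\ref{Sc-stabEq}(a),(d) together with Lemma~\ref{ScottPSL} to control what $F$ does to $\Omega_{Q_0}(k_G)$, strip off the trivial module, and conclude via Theorem~\ref{thm:LinckMoritaEq}. The paper compresses your exact-sequence/adjunction argument into a citation of the ``stripping-off'' lemma \cite[Lemma~A.1]{KMN11}, whereas you carry it out by hand: your use of the biadjunction $(-\otimes_B M,\,-\otimes_{B'} M^*)$ together with $k_{G'}\otimes_{B'}M^*\cong k_G$ to exclude $P(k_{G'})$ from $\mathcal P$ is a clean and self-contained replacement for that citation. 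One small inaccuracy worth fixing: $Q_0$ is an index-$2$ (cyclic) subgroup of $P$ only when $q\equiv 1\pmod 4$; when $q\equiv -1\pmod 4$ one has $|\mathbb B(q)|$ odd and $Q_0=1$, so $\mathrm{Sc}(G,\mathbb B(q))=P(k_G)$. Your argument does not actually use the index-$2$ claim anywhere (only that the same $Q_0\le P$ works for both $G$ and $G'$, which is guaranteed by $q\equiv q'\pmod 4$ and $|G|_2=|G'|_2$), so this is purely cosmetic.
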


\begin{proof}
Set $M:={\mathrm{Sc}}(G\times G',\Delta P)$ and $B:=B_0(kG)$ and
$B':=B_0(kG')$.
First, by Proposition~\ref{cor:steqD2n}(a), $M$ induces a
stable equivalence of Morita type between $B$ and $B'$. We claim that this  is a Morita equivalence. Using Theorem \ref{thm:LinckMoritaEq}(b), it is enough to check that the simple $B$-modules are mapped to the simple $B'$-modules.\par
To start with, by Proposition~\ref{cor:steqD2n}(a) and Lemma~\ref{Sc-stabEq}(a),  we have
\begin{equation}\label{triMod}
k_G \otimes_B M \ = \ k_{G'}\,.
\end{equation}
Next, because $q\equiv q'\pmod{4}$, the Scott modules  ${\mathrm{Sc}}(G,\,\mathbb B(q))$ 
and ${\mathrm{Sc}}(G',\,\mathbb B(q'))$ have a common  vertex $Q$ (which depend on 
the value of $q$ modulo $4$). Therefore, by Lemma~\ref{Sc-stabEq}(c), we have
$${\mathrm{Sc}}(G,\,\mathbb B(q))\otimes_{B}M 
= {\mathrm{Sc}}(G',\,\mathbb B(q'))\oplus\text{\rm{(proj)}}\,.$$
Moreover, as ${\mathrm{Sc}}(G,\,\mathbb B(q))$ and ${\mathrm{Sc}}(G',\,\mathbb B(q'))$ 
are the relative $Q$-projective covers of $k_G$ and~$k_{G'}$ respectively, it follows  from Lemma \ref{ScottPSL} that the socle series of $\Omega_Q(k_G)$ and $\Omega_Q(k_{G'})$ are given by
$$
      \Omega_Q(k_G) \ = \ 
\boxed{
      \begin{matrix} S(q) \oplus  S(q)^* \\ k_G
      \end{matrix}    } \
       \  \text{ and } \ \ 
      \Omega_Q(k_{G'}) \ = \ \boxed{
      \begin{matrix} S(q') \oplus {S(q')}^* \\ k_{G'}
      \end{matrix}    }\,.
$$
Thus, by Lemma \ref{Sc-stabEq}(d),
$$
  \boxed{
      \begin{matrix} S(q) \oplus  S(q)^* \\ k_G
      \end{matrix}    }  \otimes_BM
      =
  \boxed{
      \begin{matrix} S(q') \oplus  {S(q')}^* \\ k_{G'}
      \end{matrix}    }          
 \oplus \text{ (proj) }.
 $$     
Then it follows from (\ref{triMod}) 
and \cite[Lemma A.1]{KMN11} (stripping-off method) that
$$
(S(q)\otimes_B M)\oplus (S(q)^{*}\otimes_B M) =(S(q)\oplus S(q)^*)\otimes_B M =   S(q')\oplus  {S(q')}^* \oplus (\text{proj}).
$$
Thus Theorem \ref{thm:LinckMoritaEq}(a) implies that 
both  (non-projective) simple 
$B$-modules $S(q)$ and  $S(q)^*$ are mapped to a simple $B'$-module. \par
In conclusion, Theorem \ref{thm:LinckMoritaEq}(b) yields that $M$ induces a Morita equivalence. 
As $M$ is a $2$-permutation $k(G\times G')$-module, 
the Morita equivalence induced by $M$ is actually a splendid Morita equivalence, see \S\ref{subsec:PuigEq}. 
\end{proof}

Next we consider the case $\PGL_2(q)$.  We fix a subgroup $H(q)<\PGL_2(q)$ such that $H(q)\cong\PSL_2(q)$ and keep the notation $\mathbb B(q)<H(q)$ as above. 
Furthermore, the principal $2$-block of $\PGL_2(q)$ contains two simple modules, namely the trivial module and a self-dual module of dimension $q-1$, which we denote by $T(q)$.

\begin{Lemma}\label{ScottPGL}
Let $G:=\PGL_2(q)$ where $q$ is a power of an odd prime.  Then the Loewy and socle sturucture of the Scott module 
${\mathrm{Sc}}(G,\,\mathbb B(q))$ with respect to $\mathbb B(q)$ is as follows:
$$
{\mathrm{Sc}}(G,\,\mathbb B(q))\ = \ 
\boxed{
\begin{matrix} 
 k_G  
\\
\boxed  {\begin{matrix} k_G \oplus T(q) \\
                                 T(q) \oplus k_G \end{matrix} 
            }                                   
\\
k_G 
\end{matrix} 
}
$$
\end{Lemma}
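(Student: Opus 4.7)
The proof parallels that of Lemma~\ref{ScottPSL}, now for $G=\PGL_2(q)$ and exploiting the index-two subgroup $H(q)\cong\PSL_2(q)$. My plan has three stages: (i)~compute the ordinary character of $k_{\mathbb{B}(q)}\uparrow^G$; (ii)~show that this induced module coincides with $\mathrm{Sc}(G,\mathbb{B}(q))$ and identify its top and socle; (iii)~pin down its Loewy structure.

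For stage (i), I would use the chain $\mathbb{B}(q)\leq \mathbb{B}^G\leq G$, where $\mathbb{B}^G$ denotes a Borel subgroup of $G$, so that $\mathbb{B}^G/\mathbb{B}(q)\cong G/H(q)\cong C_2$. Combined with the classical identity $1_{\mathbb{B}^G}\uparrow^G=1_G+\mathrm{St}_G$ and the observation that the nontrivial character of $\mathbb{B}^G/\mathbb{B}(q)$ is the restriction of the sign character $\epsilon$ of $G/H(q)$, one obtains
$$1_{\mathbb{B}(q)}\uparrow^G=1_G+\epsilon+\mathrm{St}_G+\mathrm{St}_G\cdot\epsilon,$$
which on reduction mod~$2$ yields composition factors $4\,k_G+2\,T(q)$. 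For stage (ii), Frobenius reciprocity gives $\dim\mathrm{Hom}_{kG}(k_{\mathbb{B}(q)}\uparrow^G,k_G)=1$; for the other simple $T(q)$, Clifford theory forces $T(q)\!\downarrow_{H(q)}=S(q)\oplus S(q)^*$, and since by Lemma~\ref{ScottPSL} neither $S(q)$ nor $S(q)^*$ lies in the top of $k_{\mathbb{B}(q)}\uparrow^{H(q)}$, a further application of Frobenius reciprocity yields $\mathrm{Hom}_{kG}(k_{\mathbb{B}(q)}\uparrow^G,T(q))=0$. Thus $M:=k_{\mathbb{B}(q)}\uparrow^G$ has a unique simple quotient $k_G$, is indecomposable, and equals $\mathrm{Sc}(G,\mathbb{B}(q))$; by self-duality its socle is $k_G$ as well.

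For stage (iii), $\mathrm{rad}(M)/\mathrm{soc}(M)$ is self-dual with composition factors $2\,k_G+2\,T(q)$. When $q\equiv-1\pmod 4$, $\mathbb{B}(q)$ has odd order so $M=P(k_G)$, and the claim follows from Erdmann's description \cite{Erd90} of $B_0(kG)$ as a tame dihedral block with two simples whose quiver $\bullet\rightleftarrows\bullet$ has no loops --- in particular $\mathrm{Ext}^1_{kG}(k_G,k_G)=0=\mathrm{Ext}^1_{kG}(T(q),T(q))$. When $q\equiv 1\pmod 4$, these Ext vanishings remain valid (the Morita type of $B_0(kG)$, and hence its quiver, depends only on the fusion system, which is of $\PGL_2$-type in both cases), and combined with the restriction $M\!\downarrow_{H(q)}\cong\mathrm{Sc}(H(q),\mathbb{B}(q))^{\oplus 2}$ (by Mackey, using that the outer automorphism from $G/H(q)$ swaps $S(q)\leftrightarrow S(q)^*$ and so fixes $\mathrm{Sc}(H(q),\mathbb{B}(q))$ up to isomorphism), they force the claimed Loewy structure. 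I expect the main obstacle to be this last elimination step: in particular, ruling out the decomposable possibility $\bigl(\begin{smallmatrix}k_G\\T(q)\end{smallmatrix}\bigr)\oplus\bigl(\begin{smallmatrix}T(q)\\k_G\end{smallmatrix}\bigr)$ for $\mathrm{rad}(M)/\mathrm{soc}(M)$, which requires playing the indecomposability of $M$ against the vanishing of $\mathrm{Ext}^1_{kG}(k_G,k_G)$ and $\mathrm{Ext}^1_{kG}(T(q),T(q))$ to prevent two copies of the same simple from sitting in adjacent Loewy layers.
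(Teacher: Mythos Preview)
Your proposal contains a fundamental error that invalidates stage~(iii). You claim that the Gabriel quiver of $B_0(kG)$ is $\bullet\rightleftarrows\bullet$ with no loops, and in particular that $\mathrm{Ext}^1_{kG}(k_G,k_G)=0$. This is false: since $H(q)\cong\PSL_2(q)$ has index~$2$ in $G=\PGL_2(q)$, we have $|G/O^2(G)|=2$, whence $\dim_k\mathrm{Ext}^1_{kG}(k_G,k_G)=1$. Indeed, this is visible in the very Loewy structure you are trying to establish: the second Loewy layer of the Scott module is $k_G\oplus T(q)$, which is impossible if $\mathrm{Ext}^1_{kG}(k_G,k_G)=0$. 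In Erdmann's classification the dihedral blocks with two simple modules (families $D(2\mathcal A)$ and $D(2\mathcal B)$) carry a loop at the vertex corresponding to $k_G$. Your claim $\mathrm{Ext}^1_{kG}(T(q),T(q))=0$ is correct, but that alone is not enough.

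This error also leads you to misidentify the ``main obstacle''. You say you must rule out the possibility that $\mathrm{rad}(M)/\mathrm{soc}(M)$ decomposes as $\bigl(\begin{smallmatrix}k_G\\T(q)\end{smallmatrix}\bigr)\oplus\bigl(\begin{smallmatrix}T(q)\\k_G\end{smallmatrix}\bigr)$; but this \emph{is} the heart, at least when $q\equiv-1\pmod4$, where the paper invokes Webb's theorem \cite{Web82} and Auslander--Carlson \cite{AC86} to show the heart of $P(k_G)$ splits into two mutually dual endotrivial summands. For $q\equiv1\pmod4$ the genuine obstacle is to exclude a Loewy-length-$5$ structure in which the copies of $k_G$ stack vertically; the paper handles this by computing $\dim_k\mathrm{End}_{kG}(X)=4$ via Scott's lifting theorem and the four-term character decomposition, and checking that the length-$5$ candidate would yield only dimension~$3$.

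A secondary issue: your assertion that ``the Morita type of $B_0(kG)$ depends only on the fusion system'' is also incorrect in this setting. Cases~(5) and~(6) of Theorem~\ref{MainTheorem} both realise the $\PGL_2$-type fusion system on $D_{2^n}$ yet lie in distinct Morita equivalence classes, so you cannot transport quiver information from one congruence class of $q$ to the other by that route.
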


\begin{proof}
Write $H:=H(q)$,  $\mathbb B:=\mathbb B(q)$, $Y:={\mathrm{Sc}}(H,\,\mathbb B)$ and  $X:=Y{\uparrow}^{G}$.
Since $|G/H|=2$, Green's indecomposability theorem and Frobenius Reciprocity
imply that $X={\mathrm{Sc}}(G,\,\mathbb B)$.

Let $\chi$ be the ordinary character of $G$ afforded by the $2$-permutation
$kG$-module $X$. It follows from equation~(\ref{Borel}), \cite[Table 9.1]{Bon11} and Clifford theory, that
\begin{equation}\label{character}
\chi \ = \ 1_{\mathbb B}{\uparrow}^H{\uparrow}^G\  = \ 1_G+1'+ \chi_{St_1} + \chi_{St_2}
\end{equation}
where $1_G$ is the trivial character, $1'$ is the non-trivial linear character of $G$ 
and $\chi_{St_i}$ for $i=1,2$ are the two distinct irreducible constituents of $\chi_{St}\!\uparrow^{G}$ of degree $q$. 
Using \cite[Table 9.1]{Bon11} we have that the $2$-modular reduction of $X$ is 
$$X=k_G+k_G+(k_G+T(q))+(k_G+T(q))$$
as composition factors. 
Moroeover, by Proposition~\ref{ScottPSL}, we have that the Loewy and socle series of $Y$ is 
$$Y=\boxed{\begin{matrix} k_H\\S(q)\oplus S(q)^*\\k_H\end{matrix}}\,.$$
\par
Assume first that $q\equiv -1\pmod{4}$. As in the proof of Lemma~\ref{ScottPSL}, 
as $2\nmid\mathbb B|$, we have that
$$
P(k_G)={\mathrm{Sc}}(G, \mathbb B)\,.
$$
Moreover, by Webb's theorem \cite[Theorem E]{Web82}, the heart $\mathcal{H}(P(k_G))$ of $P(k_G)$ is decomposable with precisely two indecomposable summands and by \cite[Lemma 5.4 and Theorem 5.5]{AC86}, these two summands are dual to each other and endo-trivial modules. It follows that $P(k_G)$ must have the following Loewy and socle structure:
$$
{\mathrm{Sc}}(G, \, \mathbb B)
=
P(k_G)= 
\boxed{
\begin{matrix} 
 k_G  
\\
\boxed{\begin{matrix} k_G \\ T(q) \end{matrix}} 
\oplus
\boxed{\begin{matrix} T(q) \\ k_G \end{matrix}} 
\\
k_G
	\end{matrix}}\,.
$$
\par
Next assume that $q\equiv 1\pmod{4}$. 
Since $X=Y{\uparrow}^G$, $T(q)=S(q){\uparrow}^G=S(q)^*{\uparrow}^G$, Frobenius Reciprocity implies that 
 $X/(X\,{\mathrm{rad}}(kG))\cong {\mathrm{soc}}(X)\cong k_G$,
and that $X$ has a filtration by submodules such that
$X\gneq X_1\gneq X_2$ such that 
$$X/X_1\cong\boxed{\begin{matrix} k_G\\k_G\end{matrix}}\,,\quad X_1/X_2\cong T(q)\oplus T(q)\quad\text{ and }\quad X_2\cong\boxed{\begin{matrix} k_G\\k_G\end{matrix}}$$ 
by Proposition~\ref{ScottPSL}.
Since $X$ is a $2$-permutation $kG$-module,
we know by (\ref{character}) and Scott's theorem on the lifting of homomorphisms 
(see \cite[Theorem II.12.4(iii)]{Lan83}) that
$$
\dim[{\mathrm{Hom}}_{kG}(X, \, \boxed{\begin{matrix}k_G\\k_G\end{matrix}}\,)]
\ = 2.
$$
This implies that $X$ has both a factor module and a submodule which have Loewy structure:
$$\boxed{\begin{matrix}k_G\\k_G\end{matrix}}\,.$$
Now we note that $k_G$ occurs exactly once in the second Loewy layer of $X$
as $|G/O^2(G)|=2$, and we have that the Loewy structure of $X$ is of the form
$$
X\ = \
\boxed{
\begin{matrix} 
 k_G  
\\
k_G \ \cdots
\\
\vdots
\\
k_G
	\end{matrix}}$$
so that only $k_G+(2\times T(q))$ are left to determine.  Further we know from Shapiro's Lemma and \cite[Theorem 2]{Erd77}
that ${\mathrm{Ext}}_{kG}^1(T(q),T(q))=0$
and that 
$$\dim[{\mathrm{Ext}}_{kG}^1(k_G,T(q))]=\dim[{\mathrm{Ext}}_{kG}^1(T(q),k_G)]=1\,.$$
Hence, using the above filtration of $X$, we obtain that
the Loewy structure of $X/X_2$ is
\begin{equation}\label{2}
      X/X_2 \ = \ 
\boxed{\begin{matrix}  k_G \\ k_G \oplus T(q) \\ T(q) \end{matrix}}\,.
\end{equation}
Thus $X$ has  Loewy structure 
\begin{equation}\label{X}
\boxed{\begin{matrix} k_G\\k_G \oplus T(q) \\ T(q)\oplus k_G \\ k_G
\end{matrix}}
\quad \text{ or } \quad
\boxed{\begin{matrix} k_G\\k_G \oplus T(q) \\ T(q)\\k_G \\ k_G
\end{matrix}}\,.
\end{equation}
It follows from (\ref{2}) and the self-dualities of $k_G$, $T(q)$ and $X$  that
$X$ has a submodule $X_3$ such that the socle series has the form
\begin{equation}\label{X3}
X_3 \ = \ 
\boxed{\begin{matrix}  T(q) \\ k_G \oplus T(q) \\ k_G \end{matrix}}
\text{\quad (socle series)}.
\end{equation}

First, assume that the second case in (\ref{X}) holds.
Then, again by the self-dualities, the socle series of $X$ has the form
\begin{equation}\label{Xs}
  X \ = \ \boxed
{\begin{matrix} k_G\\k_G\\ T(q)\\k_G\oplus T(q) \\ k_G
\end{matrix}}
\quad\text{ (socle series)}.
\end{equation}
By making use of (\ref{X}) and (\ref{Xs}), we have
\begin{equation}\label{X_case2}
X \ = \ 
\boxed{
\begin{matrix} 
 k_G  
\\
\boxed{\begin{matrix} k_G \ \\ T(q) \ \\ k_G \end{matrix}} 
\oplus
\boxed{\begin{matrix} T(q) \end{matrix}} 
\\
k_G
	\end{matrix}}
\ = \
\boxed{\begin{matrix} k_G\\k_G \oplus T(q) \\ T(q)\\k_G \\ k_G
\end{matrix}} \text{  (Loewy series)}	
\ = \ \ \boxed
{\begin{matrix} k_G\\k_G\\ T(q)\\k_G\oplus T(q) \\ k_G
\end{matrix}}
\text{ (socle series)}.
\end{equation}
It follows from (\ref{X_case2}) that (up to isomorphism) 
there are exactly four factor modules $U_1, \ldots, U_4$ of $X$ such that
$U_i/(U_i\,{\mathrm{rad}}(kG))\cong{\mathrm{soc}}(U_i)\cong k_G$ for each $i$, and furthermore that
these have structures such that
\begin{equation}\label{Ui}
U_1=X, \ \ U_2\cong k_G, \ \ U_3\cong \boxed{\begin{matrix} k_G \ \\ k_G\end{matrix}}, \ \ 
U_4\cong \boxed{\begin{matrix} k_G \ \\ k_G\ \\ T(q) \ \\ k_G \end{matrix}}.
\end{equation}
Now we know by (\ref{X_case2}) that $X$ has the following socle series
\begin{equation}\label{socleX}
X=
\boxed{\begin{matrix}
k_G \ \\ k_G \ \\ T(q) \ \\ k_G\oplus T(q) \ \\ k_G
\end{matrix}} \ \ \text{(socle series)}.
\end{equation}
If $X$ has a submodule isomorphic to $U_4$, then 
this contradicts (\ref{socleX})
by comparing the third (from the bottom) socle layers of $U_4$ and $X$,
since ${\mathrm{soc}}^3(U_4)\cong k_G$ and 
${\mathrm{soc}}^3(X)\cong T(q)$
(since $U_4$ is a submodule of $X$, ${\mathrm{soc}}^3(U_4) \hookrightarrow{\mathrm{soc}}^3(X)$
by \cite[Chap.I Lemma 8.5(i)]{Lan83}).
This yields that such a $U_4$ does not exist as a submodule of $X$. 
Hence, by (\ref{Ui}), 
$$ \dim_k[{\mathrm{End}}_{kG}(X)]=3.$$
However, by (\ref{character}) and 
Scott's theorem on lifting of endomorphisms of $p$-permutation modules 
\cite[Theorem II 12.4(iii)]{Lan83}, this dimension has to be $4$, so that we 
have a contradiction. As a consequence  the second case in (\ref{X}) 
does not occur. This implies that only the first case in (\ref{X}) can occur. The claim follows.
\end{proof}

\begin{Proposition}\label{PGL_2}
Let $G:=\PGL_2(q)$ and $G':=\PGL_2(q')$, where $q$ and $q'$ are powers of odd 
primes   such that $q\equiv q'\pmod{4}$ and $|G|_2=|G'|_2$. Let $P$ be a common Sylow $2$-subgroup of $G$ and $G'$. 
Then the Scott module ${\mathrm{Sc}}(G\times G',\, \Delta P)$ 
induces a splendid Morita equivalence between $B_0(kG)$ and $B_0(kG')$.
\end{Proposition}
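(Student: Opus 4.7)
Set $M := {\mathrm{Sc}}(G \times G', \Delta P)$, $B := B_0(kG)$ and $B' := B_0(kG')$. The plan is to mirror the proof of Proposition~\ref{PSL_2}, using Lemma~\ref{ScottPGL} in place of Lemma~\ref{ScottPSL}. By Corollary~\ref{cor:steqD2n}(c), $M$ induces a stable equivalence of Morita type between $B$ and $B'$, and by Lemma~\ref{Sc-stabEq}(a) we have $k_G \otimes_B M = k_{G'}$. Since $B$ and $B'$ each have exactly two simple modules (namely $k_G, T(q)$ and $k_{G'}, T(q')$), Theorem~\ref{thm:LinckMoritaEq}(b) reduces the problem to proving that $T(q) \otimes_B M \cong T(q') \oplus \text{(proj)}$; once this is established, the fact that $M$ is a $2$-permutation $k[G \times G']$-module upgrades the Morita equivalence to a splendid one (see \S\ref{subsec:PuigEq}).

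Next, observe that the hypotheses $q \equiv q' \pmod{4}$ and $|G|_2 = |G'|_2$ ensure that ${\mathrm{Sc}}(G, \mathbb{B}(q))$ and ${\mathrm{Sc}}(G', \mathbb{B}(q'))$ share a common vertex $Q$ (cyclic when $q \equiv 1 \pmod{4}$, trivial when $q \equiv -1 \pmod{4}$). Then Lemma~\ref{Sc-stabEq}(c) and (d) yield
$${\mathrm{Sc}}(G, \mathbb{B}(q)) \otimes_B M \cong {\mathrm{Sc}}(G', \mathbb{B}(q')) \oplus \text{(proj)} \quad \text{and} \quad \Omega_Q(k_G) \otimes_B M \cong \Omega_Q(k_{G'}) \oplus \text{(proj)}.$$
From Lemma~\ref{ScottPGL}, the submodule $\Omega_Q(k_G) = \mathrm{rad}({\mathrm{Sc}}(G, \mathbb{B}(q)))$ has Loewy length $3$ with layers (from top to bottom) $k_G \oplus T(q)$, $T(q) \oplus k_G$, $k_G$; the analogous description holds for $\Omega_Q(k_{G'})$.

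To isolate the image of $T(q)$, I would apply the stripping-off method of \cite[Lemma A.1]{KMN11} iteratively, using $k_G \otimes_B M = k_{G'}$ to peel off the $k_G$-contributions one at a time: first the socle $k_G$ of $\Omega_Q(k_G)$, then the $k_G$ summand of the middle layer $T(q) \oplus k_G$, and finally the $k_G$ summand of the top layer $k_G \oplus T(q)$. After all four $k_G$-composition factors on each side have been removed, the remaining isotypic parts match to give
$$T(q)^{\oplus 2} \otimes_B M \cong T(q')^{\oplus 2} \oplus \text{(proj)}.$$
Since $T(q) \otimes_B M$ admits a unique non-projective indecomposable direct summand $V$ by Theorem~\ref{thm:LinckMoritaEq}(a), we conclude $V \cong T(q')$ and hence $T(q) \otimes_B M \cong T(q') \oplus \text{(proj)}$, completing the proof via Theorem~\ref{thm:LinckMoritaEq}(b).

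The main obstacle is the iterated stripping-off: because the intermediate Loewy layers mix $k_G$ and $T(q)$, each stripping step may a priori leave behind non-projective contributions whose placement (inside $\Omega_Q(k_{G'})$ versus inside the projective part) has to be controlled carefully. A robust backup strategy, should this layer-by-layer approach prove delicate, is to pass to the Grothendieck group and compare composition factors: the functor $-\otimes_B M$ is exact ($M$ being projective as a left $B$-module by the stable-equivalence hypothesis), so that combining the two displayed isomorphisms with the explicit structure of the PIMs $P(k_{G'})$ and $P(T(q'))$ of a block with dihedral defect and two simple modules (as in \cite{Erd90}) pins down $V \cong T(q')$ uniquely.
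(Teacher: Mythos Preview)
Your proposal is correct and follows essentially the same route as the paper. The paper presents the argument a touch more concisely by working directly with the heart $\mathcal H(P_Q(k_G))=\Omega_Q(k_G)/k_G$ (leaving only two copies of $k_G$ to strip, not three---and your ``four'' is a slip), and simply invokes \cite[Lemma~A.1]{KMN11} without the caution or the Grothendieck-group backup you raise; since $k_G\otimes_B M=k_{G'}$ holds exactly by Lemma~\ref{Sc-stabEq}(a), each stripping step is in fact unambiguous and no alternative argument is needed.
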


\begin{proof} 
Set $B:=B_0(kG)$, $B':=B_0(kG')$, and 
$M:={\mathrm{Sc}}(G\times G',\,\Delta P)$.
By Proposition~\ref{cor:steqD2n}(c), $M$ induces a stable equivalence of Morita type
between $B$ and $B'$. Again we claim that this stable equivalence is a Morita equivalence.  
Let $Q$ be a Sylow $2$-subgroup of $\mathbb B(q)$.
Then it follows from Lemma \ref{ScottPGL} that
the Loewy structures of $\mathcal H:=\mathcal H(P_Q(k_G)):=\Omega_Q(k_G)/k_G$ and $\mathcal H':=\mathcal H(P_Q(k_{G'})):=\Omega_Q(k_{G'})/k_{G'}$  are given by 
$$
    \mathcal H= 
    \ \boxed{\begin{matrix} k_G\oplus T(q) \\
                                      T(q)\oplus k_G
                 \end{matrix}}
    \quad\text{ and }\quad
\mathcal H'= 
    \ \boxed{\begin{matrix} k_{G'}\oplus T(q') \\
                                      T(q')\oplus k_{G'}
                 \end{matrix}}  \,.
$$
Then it follows from Lemma \ref{Sc-stabEq}(d) that
$$
  \boxed{\begin{matrix} k_G\oplus T(q) \\
                                      T(q)\oplus k_G
                 \end{matrix}}    
 \otimes_B M \ = \ 
\boxed{\begin{matrix} k_{G'}\oplus T(q') \\
                                      T(q')\oplus k_{G'}
                 \end{matrix}} 
 \oplus \text{(proj)}.
 $$
 Thus by the stripping-off method \cite[Lemma A.1]{KMN11} 
and Lemma~\ref{Sc-stabEq}(a)
 we obtain that
 $$(T(q)\oplus T(q))\otimes_B M = T(q')\oplus T(q')\oplus \text{(proj)}.$$
Since $T(q)$ is non-projective, Theorem \ref{thm:LinckMoritaEq}(a) implies that
$$(T(q)\oplus T(q))\otimes_B M = T(q')\oplus T(q').$$
Hence  $T(q)\otimes_B M = T(q')$.
In addition, $k_G\otimes_B M=k_{G'}$ by Lemma \ref{Sc-stabEq}(a).
Finally, since ${\mathrm{IBr}}(B)=\{ k_G, T(q)\}$ and ${\mathrm{IBr}}(B')=\{ k_{G'}, T(q')\}$, it follows from Theorem \ref{thm:LinckMoritaEq}(b)  that
$M$ induces a Morita equivalence between $B$ and~$B'$.
\end{proof}

\section{The principal blocks of $\PSL_2(q)\rtimes C_f$ and $\PGL_2(q)\rtimes C_f$.}\label{sec:autgrps}

Let $q:=r^m$, where $r$ is a fixed odd prime number and $m$ is a positive integer. 
We now let $H$ be one of the groups ${\mathrm{PSL}}_2(q)$ or ${\mathrm{PGL}}_2(q)$, and  we assume, moreover, that a Sylow $2$-subgroup $P$ of $H$ is dihedral of order at least 8.
We let  $G:=H\rtimes C_f$, where $C_f\leq\Gal(\mathbb F_{q}/\mathbb F_{r})$ is as described in cases (D3)(i)-(ii) of Section~\ref{sec:2.4}.  
By the Frattini argument, we have $G=N_G(P)H$, therefore $G/H=N_G(P)H/H$ and we may assume that we have chosen notation such that the cyclic subgroup $C_f$ normalises $P$.

\begin{Lemma}\label{lem:tildeGCGP}
With the notation above, we have $G=H\,C_G(P)$. 
\end{Lemma}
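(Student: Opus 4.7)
The plan is to exploit the fact that $C_f$ has odd order together with the observation that the automorphism group of a dihedral $2$-group of order at least $8$ is itself a $2$-group. Once this is in place the conclusion is immediate from the normality of $P$ under $C_f$.

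More precisely, I would proceed as follows. First I would recall from \S\ref{sec:2.4} that $f$ is odd, so that $G/H \cong C_f$ is cyclic of odd order. Next, by the choice of $C_f$ made just before the statement, $C_f$ normalises $P$, so conjugation induces a homomorphism
\[
\varphi \colon C_f \longrightarrow \Aut(P)
\]
whose image has odd order. The key input is then the computation $|\Aut(D_{2^n})| = 2^{2n-3}$ for $n \geq 3$: every automorphism of $P = \langle s,t \mid s^{2^{n-1}} = t^2 = 1,\ tst = s^{-1}\rangle$ must send $s$ to a power $s^i$ with $i$ odd modulo $2^{n-1}$ and $t$ to some element of the form $s^j t$, giving exactly $\phi(2^{n-1}) \cdot 2^{n-1} = 2^{2n-3}$ automorphisms. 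In particular, $\Aut(P)$ is a $2$-group, so the homomorphism $\varphi$ must be trivial, i.e.\ $C_f \leq C_G(P)$.

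With this in hand, the equality follows at once: from $G = H\,C_f$ and $C_f \leq C_G(P)$ we obtain $G = H\,C_f \leq H\,C_G(P) \leq G$, whence $G = H\,C_G(P)$, as required.

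I do not anticipate any serious obstacle: the only substantive point is the (well-known) fact that $\Aut(D_{2^n})$ is a $2$-group for $n \geq 3$, and this could be either quoted or given a one-line verification along the lines above. The hypothesis $n \geq 3$ is essential, since for $n = 2$ one has $\Aut(C_2 \times C_2) \cong \mathfrak{S}_3$, which is not a $2$-group; this is consistent with the standing assumption in the paper that the dihedral defect groups have order at least $8$.
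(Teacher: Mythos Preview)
Your proof is correct and follows essentially the same approach as the paper: both arguments hinge on the fact that $\Aut(D_{2^n})$ is a $2$-group for $n\geq 3$ (which the paper cites from Gorenstein), combined with the odd order of $C_f$ to conclude $C_f\leq C_G(P)$, whence $G=HC_f\leq H\,C_G(P)$. The paper phrases the first step via $N_G(P)=P\,C_G(P)$, but the substance is identical.
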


\begin{proof}
The normaliser of $P$ in $G$ has the form
$$N_G(P)=P\,C_G(P)= P\times O_{2'}(C_G(P))$$
because $\Aut(P)$ is a $2$-group and $P$ a Sylow $2$-subgroup of $G$ (see e.g. \cite[Lemma~5.4.1 (i)-(ii)]{Gor68}). 
Therefore, by the above, the subgroup $C_f\leq G$ centralises $P$ 
so that we must have $$G=HC_f \leq H\,C_G(P)$$
and hence equality holds.
\end{proof}

We can now apply the result of Alperin and Dade (Theorem~\ref{thm:AlperinDade}) in order to obtain splendid Morita equivalences.

\begin{Corollary}\label{cor:semidirect}
The principal $2$-blocks $B_0(kG)$ and $B_0(kH)$ are splendidly Morita equivalent.    
\end{Corollary}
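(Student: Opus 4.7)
The plan is to verify directly that the hypotheses of Theorem~\ref{thm:AlperinDade} are satisfied in our setting, and then invoke it with $\widetilde{G}:=G$ and the normal subgroup $H\trianglelefteq G$. Indeed, since $G=H\rtimes C_f$, we have $H\trianglelefteq G$ and $G/H\cong C_f$; by the choice of $C_f$ (a subgroup of $\Gal(\mathbb{F}_q/\mathbb{F}_r)$ of odd order $f$, as explained in cases (D3)(i)-(ii)), the quotient $G/H$ is a $2'$-group, that is, a $p'$-group for our prime $p=2$.

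Next I would check that $P$ is a Sylow $2$-subgroup not only of $H$ but in fact also of $G$; this is immediate since $|G:H|=f$ is odd, so $|G|_2=|H|_2=|P|$. The remaining non-trivial hypothesis of Theorem~\ref{thm:AlperinDade} is the factorisation $\widetilde{G}=G\,C_{\widetilde{G}}(P)$ with the Sylow subgroup taken in $G$, which in our notation reads $G=H\,C_G(P)$; but this is precisely the content of Lemma~\ref{lem:tildeGCGP}.

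With all hypotheses verified, Theorem~\ref{thm:AlperinDade}(b) yields directly that the $(B_0(kG),B_0(kH))$-bimodule $B_0(kG)=\tilde{e}kG=\tilde{e}kGe$ (where $\tilde{e}$ and $e$ denote the block idempotents of $B_0(kG)$ and $B_0(kH)$, respectively) induces a splendid Morita equivalence between $B_0(kG)$ and $B_0(kH)$. This concludes the proof.

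There is essentially no obstacle here: the entire corollary reduces to checking hypotheses of a quoted theorem, the only non-trivial verification having already been carried out in Lemma~\ref{lem:tildeGCGP}. The proof proposal is therefore very short, amounting to a reference to Theorem~\ref{thm:AlperinDade} together with an observation about the order of $C_f$.
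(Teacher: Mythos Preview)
Your proposal is correct and follows exactly the same approach as the paper: verify that $G=H\,C_G(P)$ via Lemma~\ref{lem:tildeGCGP} and apply Theorem~\ref{thm:AlperinDade}. The paper's proof is essentially a one-line reference to these two results, while you have spelled out the easy hypothesis checks ($H\trianglelefteq G$, $G/H$ a $2'$-group, $P\in\Syl_2(G)$) that the paper leaves implicit from the standing assumptions of the section.
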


\begin{proof}
As $G=H\,C_G(P)$ by Lemma \ref{lem:tildeGCGP} 
the claim follows directly from Theorem~\ref{thm:AlperinDade}.
\end{proof}

\section{Proof of Theorem~\ref{MainTheorem}}\label{sec:proofs}

\begin{proof}[Proof of Theorem~\ref{MainTheorem}]
First because we consider principal blocks only, we may assume that $O_{2'}(G)=1$. 
Therefore, we may assume that $G$ is one of the groups listed in (D1)-(D3) in 
\S\ref{sec:O2'trivial}. 
Now it is known by the work of Erdmann \cite{Erd90} that the principal blocks in 
(1)-(6) fall into distinct Morita equivalence classes.
Therefore the claim follows directly from Corollary~\ref{cor:semidirect}, and 
Propositions~\ref{PSL_2} and \ref{PGL_2}.
\end{proof}

\bigskip
\section{Generalised $2$-decomposition numbers}\label{sec:genDecNum}

Brauer, in \cite[\S VII]{Bra66}, computes character values at $2$-elements for principal blocks with dihedral defect groups up to signs $\delta_1,\delta_2,\delta_3$, thus providing us with the generalised decomposition matrices of such  blocks  up to the signs $\delta_1,\delta_2,\delta_3$.  As a corollary to Theorem~\ref{MainTheorem}, we can now specify these signs.
See also \cite[\S 6]{Mur09} for partial results in this direction.\\

Throughout this section, we assume that $G$ is a finite group with a dihedral Sylow $2$-subgroup 
$P:= D_{2^n}$ of order $2^{n}\geq 8$, for which we use the presentation
$$P=\langle s,t \mid  s^{2^{n-1}}=t^2=1, tst=s^{-1} \rangle\,.$$
Furthermore, we let $\zeta$ denote a primitive $2^{n-1}$-th root of unity in $\mathbb{C}$,
and we let $z:=s^{2^{n-2}}$ (see \S 2.5).

For a $2$-block $B$ of $G$, we let $\mathcal{D}_{\text{gen}}(B)\in \text{Mat}_{k(B)\times k(B)}$ denote its generalised $2$-decom\-position matrix.
In other words: let $S_2(G)$ denote a set of representatives of the $G$-conjugacy classes of the $2$-elements in a fixed defect group  of $B$. Let $u\in S_2(G)$, $H:=C_G(u)$, and  consider $\chi\in\Irr(G)$. Then the \emph{generalised $2$-decomposition 
numbers} are defined to be the uniquely determined algebraic integers  $d^{u}_{\chi\varphi}$ such that
$$\chi(uv)=\sum_{\varphi\in \IBr(H)}d^{u}_{\chi\varphi}\varphi(v)\quad \text{ for }v\in H_{2'}\,,$$
and we set $\mathcal{D}^{u}:=(d^{u}_{\chi\varphi})_{\substack{\chi\in\Irr(B)\\ \varphi\in\IBr(b_u)}}$ where $b_u$ is a $2$-block of $H$ such that $b_u^G=B$,  so that
$$\mathcal{D}_{\text{gen}}(B)=\left(d^{u}_{\chi\varphi}\mid \chi\in\Irr(B), u\in S_2(G), \varphi\in\IBr(b_u) \right)$$
is the \emph{generalised $2$-decomposition matrix} of $B$.  We recall that $\mathcal{D}^{1}$ is simply the $2$-decomposition matrix of $B$. By convention, we see $\mathcal{D}_{\text{gen}}(B)$ as a matrix in $\text{Mat}_{k(B)\times k(B)}$ via $\mathcal{D}_{\text{gen}}(B)=(\mathcal{D}^{u}|u\in S_2(G))$ (one-row block matrix).
Brauer  \cite[Theorem (7B)]{Bra66} proved that $B_0(kG)$ satisfies
$$|\Irr(B_0(kG))|=2^{n-2}+3$$
and possesses exactly $4$ characters of height zero ($\chi_0:=1_G$, $\chi_1$, $\chi_2$, $\chi_3$ in Brauer's notation \cite{Bra66}). In the sequel we always label the 4 first rows of $\mathcal{D}_{\text{gen}}(B_0)$ with these. The remaining characters are of height $1$ and all have the same degree:  unless otherwise specified, we denote them by  $\chi^{(j)}$ with $1\leq i\leq 2^{n-2}-1$, possibly indexed by their degrees.
The first column of $\mathcal{D}_{\text{gen}}(B_0(kG))$ is always labelled with the trivial Brauer character.\\

\begin{Corollary}\label{cor:GenDecNum}
The principal $2$-block $B_0:=B_0(kG)$ of a finite group $G$ with a dihedral Sylow $2$-subgroup 
$P:= D_{2^n}$ of order $2^{n}\geq 8$ affords one of the following generalised decomposition matrices.
\begin{enumerate}
\item[\rm(a)] If $B_0\sim_{SM} B_0(kD_{2^n})$, then
$$
\mathcal{D}_{\text{gen}}(B_0)\,\,=\,\,
\begin{array}{l|c:c:c:r:r}
         &\varphi_{1_1}& z=s^{2^{n-2}}   & s^{a}  &  t & st \\
\hline
1_G& 1& 1& 1& 1&  1\\
\chi_1&1 & 1 & 1 & -1&-1  \\
\chi_2& 1& 1& (-1)^{a}& 1 & -1 \\
\chi_{3}&1 &1 & (-1)^{a} &-1 & 1\\
\chi^{(j)}&2 & 2(-1)^{j} & \zeta^{ja}+\zeta^{-ja} & 0 & 0 \\
\end{array}
$$
where $1\leq j,a\leq 2^{n-2}-1$  \smallskip (up to relabelling the $\chi_i$'s).
\item[\rm(b)] If $n=3$ and $B_0\sim_{SM} B_0(k\mathfrak A_7)$, then 
$$
\mathcal{D}_{\text{gen}}(B_0)\,\,=\,\,
\begin{array}{l|ccc:c:r}
         &\varphi_{1_1}& \varphi_{14_1}& \varphi_{20_1} & z=s^{2} & s\\
\hline
1_G& 1& 0& 0& \ \, \, 1\ &  1\\
\chi_7&1 &1 &0 &-1\  &-1  \\
\chi_8& 1& 0& 1&\ \, \, 1\ &-1  \\
\chi_{9}&1 &1 &1 & -1\ & 1\\
\chi_{5}&0 &1 &0 & \ \, \, 2\  &0  \\
\end{array}
$$
where the irreducible characters and  Brauer characters  are labelled according to the \textsl{ATLAS} \cite{Atlas} and the Modular Atlas \cite{ModAtlas}, \smallskip respectively.
\item[\rm(c)] If $B_0\sim_{SM} B_0(k[{\mathrm{PSL}}_2(q)])$, where  $(q-1)_2=2^n$, then 
$$
\mathcal{D}_{\text{gen}}(B_0)\,\,=\,\,
\begin{array}{l|ccc:c:c}
         &\varphi_{1}& \varphi_{2}& \varphi_{3} & z=s^{2^{n-2}} & s^{a}\\
\hline
1_G& 1& 0& 0& 1&  1\\
\chi_1=\chi_{(q+1)/2}^{(1)}& 1&1 &0 &1 & (-1)^a \\
\chi_2=\chi_{(q+1)/2}^{(2)}&1 &0 & 1& 1 & (-1)^a \\
\chi_3=\chi_{St}& 1&1 &1 &1 &1 \\
\chi_{q+1}^{(j)}&2 & 1& 1& 2(-1)^j  &\zeta^{ja}+\zeta^{-ja}  \\
\end{array}
$$
where   $1\leq j,a\leq 2^{n-2}-1$, $\chi_1$ and $\chi_2$ are labelled by their degrees, and $\chi_{St}$ is the Steinberg \smallskip character.
\item[\rm(d)] If $B_0\sim_{SM} B_0(k[{\mathrm{PSL}}_2(q)])$ with  $(q+1)_2=2^n$, then
$$
\mathcal{D}_{\text{gen}}(B_0)\,\,=\,\,
\begin{array}{l|ccc:c:c}
         &\varphi_{1}& \varphi_{2}& \varphi_{3} & z=s^{2^{n-2}} & s^{a}\\
\hline
1_G& 1& 0& 0& \ \, 1&  1\\
\chi_{(q-1)/2}^{(1)}& 0&1 &0 &-1 & (-1)^{a+1} \\
\chi_{(q-1)/2}^{(2)}&0 &0 & 1&  -1& (-1)^{a+1} \\
\chi_{St}& 1&1 &1 &-1 &-1\ \ \, \\
\chi_{q-1}^{(j)}&0 & 1& 1& 2(-1)^{j+1} & (-1)(\zeta^{ja}+\zeta^{-ja}) \\
\end{array}
$$
where  $1\leq j,a\leq 2^{n-2}-1$, $\chi_1$ and $\chi_2$ are labelled by their degrees, and $\chi_{St}$ is the Steinberg \smallskip character.
\item[\rm(e)] If $B_0\sim_{SM} B_0(k[{\mathrm{PGL}}_2(q)])$  with $2(q-1)_2=2^n$, then
$$
\mathcal{D}_{\text{gen}}(B_0)\,\,=\,\,
\begin{array}{l|cc:r:c:c}
         &\varphi_{1}& \varphi_{2}& t & z=s^{2^{n-2}} & s^{a}\\
\hline
1_G& 1& 0& 1& 1&  1\\
\chi_1=\chi_{q}^{(1)}&1 & 1&-1 & 1& 1 \\
\chi_2=\chi_{q}^{(2)}& 1& 1& 1& 1  & (-1)^{a} \\
\chi_{3}& 1&0 &-1 &1 &(-1)^{a} \\
\chi_{q+1}^{(j)}&2 & 1&0 &  2(-1)^j  &\zeta^{ja}+\zeta^{-ja}   \\
\end{array} 
$$
where  $1\leq j,a\leq 2^{n-2}-1$, $\chi_1$ and $\chi_2$ are labelled by their degrees, and $\chi_3$ is a linear \smallskip character.
\item[\rm(f)] If $B_0\sim_{SM} B_0(k[{\mathrm{PGL}}_2(q)])$ with $2(q+1)_2=2^n$, then 
$$
\mathcal{D}_{\text{gen}}(B_0)\,\,=\,\,
\begin{array}{l|cc:r:c  :c}
         &\varphi_{1}& \varphi_{2}& t & z=s^{2^{n-2}} & s^{a}\\
\hline
1_G& 1& 0& 1&\ \,\, 1&  1\\
\chi_1=\chi_{q}^{(1)}& 1&1 &1 & -1& -1 \ \, \, \\
\chi_2=\chi_{q}^{(2)}&1 &1 & -1& -1  & \ \, \, (-1)^{a+1}  \\
\chi_{3}&1 & 0& -1& \ \,\,1& (-1)^{a} \\
\chi_{q-1}^{(j)}& 0&1 & 0& (-2)(-1)^{j}& (-1)(\zeta^{ja}+\zeta^{-ja}) \\
\end{array} 
$$
where $1\leq j,a\leq 2^{n-2}-1$, $\chi_1$ and $\chi_2$ are labelled by their degrees, and $\chi_3$ is a linear character.
\end{enumerate}
\end{Corollary}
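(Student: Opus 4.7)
My plan is to derive Corollary~\ref{cor:GenDecNum} as a direct consequence of Theorem~\ref{MainTheorem} combined with the invariance of the generalised decomposition matrix under splendid Morita equivalence, followed by a case-by-case verification on the six distinguished representatives.

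First I would establish that $\mathcal{D}_{\text{gen}}(B_0(kG))$ is, up to relabelling of rows and columns, an invariant of the splendid Morita equivalence class of $B_0(kG)$. If $M:={\mathrm{Sc}}(G\times G',\Delta P)$ (or any $p$-permutation bimodule with vertex $\Delta P$) induces a splendid Morita equivalence between $B_0(kG)$ and $B_0(kG')$, then for every $2$-element $u\in P$, the Brauer construction $M(\Delta\langle u\rangle)$ realises a splendid Morita equivalence between the principal blocks $B_0(kC_G(u))$ and $B_0(kC_{G'}(u))$ by exactly the argument of Lemma~\ref{orderP}. This local-to-global compatibility yields matching bijections $\Irr(B_0(kG))\leftrightarrow\Irr(B_0(kG'))$ and $\IBr(b_u)\leftrightarrow\IBr(b'_u)$ for each $u$ under which the entries $d^u_{\chi\varphi}$ are preserved; equivalently, the source algebras of $B_0(kG)$ and $B_0(kG')$ are isomorphic as interior $P$-algebras, and $\mathcal{D}_{\text{gen}}$ is a source-algebra invariant.

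Once invariance is secured, Theorem~\ref{MainTheorem} reduces the corollary to verifying each of (a)--(f) on one chosen representative. For (a) the computation is immediate from the complex character table of $D_{2^n}$: four linear characters plus $2^{n-2}-1$ two-dimensional characters $\chi^{(j)}$ with $\chi^{(j)}(s^a)=\zeta^{ja}+\zeta^{-ja}$ and $\chi^{(j)}(s^at)=0$. For (b), with $n=3$, the entries can be read off from the \textsl{ATLAS}~\cite{Atlas} and the Modular Atlas~\cite{ModAtlas}: I would identify $C_{\mathfrak A_7}(z)\cong(C_2\times C_2)\rtimes C_3$ and $C_{\mathfrak A_7}(s)\cong C_4$, list the Brauer characters of their principal $2$-blocks, and evaluate $\chi(uv)$ for $v\in C_{\mathfrak A_7}(u)_{2'}$. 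For (c)--(f) I would appeal to the classical character tables of $\PSL_2(q)$ and $\PGL_2(q)$ (as in~\cite{Bon11}), to Erdmann's description of $\IBr(B_0)$ in~\cite{Erd77}, and to the centraliser data of Lemma~\ref{PGL}; decomposing $\chi(uv)$ against $\IBr(b_u)$ column by column yields the stated matrices.

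The main obstacle will be bookkeeping rather than conceptual: fixing a coherent system of labellings so that the sign pattern comes out consistently across all six cases, and, in cases (d) and (f), choosing an embedding $\PSL_2(q)\hookrightarrow\PGL_2(q)$ so that the outer involution $t$ and the element $st$ appear with the correct signs. I would therefore pin down once and for all the presentation $P=\langle s,t\mid s^{2^{n-1}}=t^2=1,\,tst=s^{-1}\rangle$, the central involution $z=s^{2^{n-2}}$, and the local embeddings described in Section~\ref{sec:pslpgl}, and then carry out Brauer's computation from~\cite[\S VII]{Bra66} explicitly on each of the six representatives. Evaluating on these fixed models determines the three undetermined signs $\delta_1,\delta_2,\delta_3$ of~\cite[\S VII]{Bra66}, and the invariance statement of the first step propagates the result to every $G$ in the corresponding splendid Morita equivalence class.
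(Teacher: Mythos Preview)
Your proposal is correct and follows essentially the same route as the paper: invoke invariance of the generalised decomposition matrix under splendid Morita equivalence (the paper cites this directly as a source-algebra invariant via \cite[(43.10)]{The95}), reduce via Theorem~\ref{MainTheorem} to the six representatives, and then compute on each representative using the known character tables together with Brauer's results \cite[\S VII]{Bra66} to pin down the signs $\delta_1,\delta_2,\delta_3$. The one simplification the paper makes explicit, which you would discover anyway, is that for every non-trivial $2$-element $u$ the centraliser $C_G(u)$ has a normal $2$-complement, so $b_u$ is nilpotent with a single Brauer character and hence $d^{u}_{\chi,1}=\chi(u)$; this reduces the ``column-by-column decomposition'' you describe to simply reading off character values.
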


\begin{proof}
Generalised decomposition numbers are determined by a source algebra of the block (see e.g. \cite[(43.10) Proposition]{The95}), hence they are preserved under splendid Morita equivalences. Thus, by Theorem~\ref{MainTheorem}, for a fixed defect group $P\cong D_{2^n}$ ($n\geq 3$), if $n=3$ there are exactly six, respectively,
five if $n\geq 4$, generalised $2$-decomposition matrices corresponding to cases $(a)$ to $(f)$ in Theorem~\ref{MainTheorem}.\par
Let $u\in S_2(G)$ be a $2$-element. First if $u=1_G$, then  by definition $\mathcal{D}^{u}=\mathcal{D}^{1}$ is the $2$-decomposition matrix of $B_0$. Therefore, in all cases, the necessary information about  $\mathcal{D}^{1}$ is given either by Erdmann's work, see \cite[TABLES]{Erd77}, or  the Modular Atlas \cite{ModAtlas}, or \cite[Table 9.1]{Bon11}. It remains to determine the matrices  $\mathcal{D}^{u}$ for $u\neq 1$. As we consider principal blocks only, for each $2$-element $u\in P$, the principal block $b$ of $C_G(u)$ is the unique block of $C_G(u)$ with $b^G=B_0$ 
by Brauer's 3rd Main Theorem \cite[Theorem~5.6.1]{NT88}. 
Moreover, when $P=D_{2^n}$, then centralisers  of non-trivial $2$-elements always possess a normal $2$-complement, so that their principal block is a nilpotent block  \cite[Corollary~3]{Bra64}.
It follows that  $d^{u}_{\chi\,1_H}=\chi(u)$.\par
These character values  are given up to  signs $\delta_1,\delta_2,\delta_3$ by  \cite[Theorem (7B), Theorem (7C), Theorem (7I)]{Bra66}. Thus we can use the character tables of the groups $D_{2^n}$, $\mathfrak A_7$, ${\mathrm{PSL}}_2(q)$ and ${\mathrm{PGL}}_2(q)$ ($q$ odd), respectively, to determine the \smallskip  signs $\delta_1,\delta_2,\delta_3$.
\begin{enumerate}
\item[\rm(a)]    We may assume $G=D_{2^{n}}$. Since $G$ is a $2$-group, the generalised $2$-decomposition matrix  $\mathcal{D}_{\text{gen}}(B_0)$ is the character table of $G$ in this case. The claim \smallskip follows. 

\item[\rm(b)]   We may assume $G=\mathfrak A_7$.  In this case $D=D_8$. Using the ATLAS \cite{Atlas}, we have that the character table of $B_0$ at $2$-elements is 
$$ 
\begin{array}{l|crr}
         &1a &2a &  4a\\
\hline
1_G&  1& 1& 1 \\
\chi_{7}&15  & -1 &-1  \\
\chi_{8}& 21 & 1 & -1 \\
\chi_{9}& 35&-1 &1 \\
\chi_{5}&  14& 2 & 0 \\
\end{array}\,\,.
$$
Hence $\mathcal{D}_{\text{gen}}(B_0)$ follows,  using e.g. the Modular Atlas \cite{ModAtlas} to identify the rows of the matrix.
\item[\rm(c)]  
 We may assume $G={\mathrm{PSL}}_2(q)$ with $(q-1)_2=2^n$.   The height $0$ characters in $B_0$ are $1_G$, the Steinberg character $\chi_{St}$ (of degree $q$), and the two characters of $G$ of degree $(q+1)/2$. 
Because the Steinberg character takes constant value~$1$ on $s^r$ ($1\leq a\leq 2^{n-2}$), we have $\delta_1=1$ in \cite[Theorem (7B)]{Bra66}, so that $\chi^{(j)}(s^a)=\zeta^{ja}+\zeta^{-ja}$ for every  $1\leq a\leq 2^{n-2}$.
And by  \cite[Theorem (7C)]{Bra66}, $\delta_2=\delta_3=-1$, from which 
it 
follows that $\chi_{(q+1)/2}^{(1)}(s^a)=\chi_{(q+1)/2}^{(1)}(s^a)=(-1)^{a}$ for each $1\leq r\leq 2^{n-2}$.
The claim \smallskip follows.
\item[\rm(d)]  
We may assume $G={\mathrm{PSL}}_2(q)$ with $(q+1)_2=2^n$. 
The height $0$ characters in $B_0$ are $1_G$, the Steinberg character $\chi_{St}$ (of degree $q$), and the two characters of $G$ of degree $(q-1)/2$. 
Because the Steinberg character takes constant value $-1$ on $s^a$ ($1\leq a\leq 2^{d-2}$), we have $\delta_1=-1$ in \cite[Theorem (7B)]{Bra66}, so that $\chi^{(j)}(s^a)=(-1)(\zeta^{ja}+\zeta^{-ja})$ for each $1\leq a\leq 2^{d-2}$.
Then by  \cite[Theorem (7C)]{Bra66}, $\delta_2=\delta_3=1$, from which 
it
follows that  $\chi_{(q-1)/2}^{(1)}(s^a)=\chi_{(q-1)/2}^{(1)}(s^a)=(-1)^{a+1}$ for every  $1\leq a\leq 2^{d-2}$.
The claim  \smallskip follows.
\item[\rm(e)]   
 We may assume $G={\mathrm{PGL}}_2(q)$ with $2(q-1)_2=2^n$.  
The height one irreducible characters in $B_0$ have degree $q+1$. The four height zero irreducible characters in $B_0$ 
are the two linear characters $1_G$ and $\chi_{3}$ (in Brauer's notation \cite[Theorem (7I)]{Bra66}) and the two characters of degree $q$, say $\chi_q^{(1)}$ and $\chi_{q}^{(2)}$.\\
Apart from $1_G$, $\chi_q^{(1)}$ is the unique of these taking constant value $1$ on $s^a$ ($1\leq a\leq 2^{d-2}$). Therefore using   \cite[Theorem (7B), Theorem (7C), Theorem (7I)]{Bra66}, we obtain $\delta_1=1$, $\delta_2=-1=\delta_3$. The claim  \smallskip follows.
\item[\rm(f)] 
 We may assume $G={\mathrm{PGL}}_2(q)$ with  $2(q+1)_2=2^n$.  
The height one irreducible characters in $B_0$ have degree $q-1$. The four height zero irreducible characters in $B_0$ are: the two linear characters $1_G$ and $\chi_{3}$ (in Brauer's notation \cite[Theorem (7I)]{Bra66}) and the two characters of degree $q$, say $\chi_q^{(1)}$ and $\chi_{q}^{(2)}$.
Apart from $1_G$, $\chi_q^{(1)}$ is the unique of these taking constant value $1$ on $s^a$ ($1\leq a\leq 2^{n-2}$). Therefore using   \cite[Theorems (7B), (7C) and (7I)]{Bra66}, we obtain $\delta_1=-1$, $\delta_2=1$ and $\delta_3=-1$. The claim  \smallskip follows.
\end{enumerate}
For the character values of $\PSL_2(q)$, we refer to \cite{Bon11} and for the character values of $\PGL_2(q)$, we refer to \cite{Ste51}.
\end{proof}
\bigskip


\section*{Acknowledgments}
\noindent
{\small
The authors thank Richard Lyons
and Ronald Solomon for the proof of Lemma~\ref{KeyLemma}.
The authors are also grateful to Naoko Kunugi
and Tetsuro Okuyama for their useful pieces of advice, and to Gunter Malle for his careful reading of a preliminary version of this work. 
Part of this work was carried out while the first author was visiting 
the TU Kaiserslautern in May and August 2017, who thanks
the Department of Mathematics of  the TU Kaiserslautern for the hospitality.
The second author gratefully acknowledges financial support by the funding body TU Nachwuchsring of the TU Kaiserslautern for the year 2016 when this work started.
Part of this work was carried out during the workshop
"New Perspective in Representation Theory of Finite Groups'' in October 2017 at the Banff International Research Station (BIRS). The authors thank the organisers of the workshop.
Finally, the authors would like to thank the referees for their careful reading and useful
comments.
\bigskip



\end{document}